\documentclass[10pt,a4paper]{article}

\usepackage[english]{babel}
\usepackage[T1]{fontenc}

\usepackage{xcolor}
\usepackage[font=small,labelfont=sc]{subcaption}
\usepackage{amsmath}
\usepackage{amssymb}
\usepackage{amsthm}
\usepackage{float}
  \theoremstyle{plain}
  \newtheorem{theorem}{Theorem}
  \newtheorem{lemma}{Lemma}
  \newtheorem{proposition}[theorem]{Proposition}
  \newtheorem{corollary}{Corollary}
  \newtheorem{assumption}{Assumption}
  \newtheorem{definition}{Definition}[section]
  \newtheorem{example}{Example}[section]
  \theoremstyle{remark}
  \newtheorem*{remark}{Remark}
\usepackage{mathrsfs}
\usepackage{mathtools}
\usepackage[left=2.7cm,right=2.7cm,top=3.0cm,bottom=3.0cm]{geometry}
\usepackage[round]{natbib}
\usepackage[colorlinks=true, allcolors=blue, hypertexnames=false]{hyperref}
\usepackage{booktabs}
\usepackage{algorithmicx}
\usepackage{algorithm}
\usepackage{placeins}
\usepackage{algpseudocode}
\usepackage{amsmath}

\allowdisplaybreaks[4]
\begin{document}

\title{\bf Policy Evaluation and Policy Gradient for Continuous-Time Markov Regime-Switching Systems}

\author{Minghui Zhang
\thanks{School of Mathematics, Southeast University, Nanjing, China. E-mail: minghuibest@163.com}
\and
Xun Li\thanks{Department of Applied Mathematics, The Hong Kong Polytechnic University, Kowloon, Hong Kong, China. E-mail: li.xun@polyu.edu.hk}
\and
Xin Zhang\thanks{
School of Mathematics, Southeast University, Nanjing, China. E-mail: xzhangseu@seu.edu.cn}
}

\date{\today}

\maketitle

\begin{abstract}
We study policy evaluation (PE) and policy gradient (PG) for continuous-time stochastic control with Markov regime-switching. The main challenges are the coexistence of diffusion and regime jumps, the coupling of regime-dependent value functions, and the construction of model-free learning rules from the coupled generators. We establish a martingale characterization of PE that incorporates both Brownian and switching martingales, and extend martingale loss and orthogonality methods to this setting. We further derive a coupled PG equation and a trajectory-based gradient representation that, under an action-independent switching generator, avoids explicit knowledge of the system coefficients and transition rates. These results lead to offline and online actor-critic algorithms. We also establish convergence and study the finite-grid approximation required for implementation. Numerical experiments for a regime-switching linear-quadratic problem illustrate the proposed methods.

\noindent
\textbf{Keywords:}
Reinforcement learning, Markov regime-switching, Continuous-time stochastic control, Policy evaluation, Policy gradient, Actor-Critic learning, Martingale methods.
\end{abstract}

\section{Introduction}

Reinforcement learning provides a data-driven approach to stochastic control when the underlying dynamics are unknown or only partially known. A typical RL procedure consists of exploration, policy evaluation, and policy improvement. Policy evaluation estimates the value function under a given policy, while policy gradient methods improve a parameterized policy using the gradient of its performance criterion. Their combination leads naturally to actor-critic methods, in which the critic evaluates the current policy and the actor updates it. Most classical RL theory is developed for discrete-time Markov decision processes. Many stochastic control problems, however, evolve continuously in time and are more naturally modeled by stochastic differential equations.

A systematic continuous-time RL theory has developed rapidly in recent years. \citet{wang2020reinforcement} introduced an entropy-regularized exploratory control framework in which randomized policies are incorporated through stochastic relaxed controls. \citet{jia2022policyevaluation} subsequently established a martingale framework for PE. They showed that the value function can be characterized through the martingale property of the value process plus accumulated rewards. This perspective leads to martingale loss and martingale orthogonality methods, and also explains why the mean-square temporal-difference error is generally not an appropriate PE objective for stochastic diffusions. \citet{jia2022policygradient} further derived a continuous-time PG representation and showed that PG can be viewed as an auxiliary PE problem. This representation forms the basis of offline and online actor-critic algorithms.

The standard diffusion framework does not capture abrupt changes in the operating environment. Such changes arise naturally in financial markets, engineering systems, and economic models. A common approach is to introduce a finite-state continuous-time Markov chain that modulates the coefficients of the diffusion process. The resulting Markov regime-switching model contains both continuous fluctuations and discrete regime transitions. Its value function becomes regime dependent, and the corresponding dynamic programming equations form a weakly coupled system through the generator of the Markov chain. This hybrid structure creates additional difficulties for RL because the learning problem must account for both diffusion uncertainty and regime-transition risk.

Recent studies on continuous-time RL with regime-switching can be broadly divided according to their main objectives. One line focuses on financial control under partially observed regimes. \citet{wu2024reinforcement} study continuous-time mean–variance portfolio selection when the market regime is unobservable. They combine Wonham filtering with an entropy-regularized exploratory formulation and develop an actor-critic algorithm based on semi-analytical representations of the value function and policy. Thus, the main difficulty there is the partial-information structure: the hidden regime is first converted into a belief state, after which PE and policy updates are performed in the filtered state space. More recently, \citet{gao2026dividend} adopt a related idea for dividend optimization under a partially observed regime-switching diffusion. Their method again relies on belief-state filtering and on the special semi-analytical structure of the underlying control problem to parameterize the critic and actor.

A second line concentrates on exploratory control and policy improvement for specific regime-switching portfolio models. \citet{chen2025exploratory} study exploratory mean–variance portfolio optimization with regime-switching. They derive a policy improvement theorem and propose an orthogonality-condition learning method based on the martingale property of the optimal value process. Their analysis also points out difficulties associated with applying standard TD learning directly in this setting. \citet{xing2026exploratory} extend exploratory mean–variance optimization to regime-switching markets with constant-elasticity-of-variance dynamics. They first exploit the analytical structure of the HJB system and then construct a reinforcement learning scheme with linear approximations for the value function and policy. These studies demonstrate that regime information can be incorporated effectively into continuous-time exploratory control. Their analyses, however, are closely tied to the structure of particular portfolio models and their explicit or semi-explicit HJB solutions.

A different research direction treats the regime itself as part of the control. \citet{huang2025switching}Huang, Li, Yu, and Zhou (2025, revised 2026) study continuous-time RL for optimal switching among multiple regimes. In their formulation, the agent randomizes both the switching time and the destination regime by controlling the generator matrix of a finite-state Markov chain. They establish well-posedness of the exploratory HJB system, prove policy improvement and policy-iteration convergence, and construct a model-free algorithm using a martingale characterization of PE. This setting is conceptually different from the present one. In optimal switching, regime transitions are decision variables. Here, the Markov chain represents an exogenous random environment, while the control acts on the continuous state dynamics.

Regime-switching RL has also begun to appear in stochastic differential games. \citet{hu2026stackelberg} consider entropy-regularized linear-quadratic Stackelberg differential games under Markovian regime-switching. Their formulation leads to exploratory weakly coupled Hamilton–Jacobi–Bellman–Isaacs equations, and neural networks are used to approximate regime-dependent value functions in high-dimensional settings. This line emphasizes hierarchical equilibrium computation rather than a general PE–PG theory for a fixed exogenous regime process.

Closely related from a technical perspective is the recent extension of continuous-time RL from diffusions to jump-diffusions. \citet{gao2026jumpdiffusions} formulate exploratory control for general jump-diffusion dynamics and develop martingale-based RL methods in the presence of jumps. A key observation is that, although jumps alter the Hamiltonian and the exploratory HJB equation, certain sample-based PE and q-learning algorithms retain the same form because jump effects are already contained in observed value increments. They also establish a connection between the theoretical exploratory process and discretely sampled randomized controls. Regime-switching diffusions are structurally different from general jump-diffusions because the jump component is a finite-state Markov chain and directly couples a vector of regime-dependent value functions. Nevertheless, this work highlights an important principle for the present study: jump terms may be essential in the theoretical generator while being absorbed implicitly by observed increments in model-free learning algorithms.

These recent developments show that continuous-time RL with regime-switching is becoming an active research direction. At the same time, the existing literature is largely centered on particular control problems, hidden-regime filtering, optimal switching, or structured HJB solutions. What remains less developed is a general martingale-based theory that treats PE and PG jointly for a controlled diffusion evolving in an exogenous Markovian regime environment. In particular, several issues do not follow directly from the pure-diffusion theory.

The main difficulty arises from the hybrid nature of the regime-switching system. Under a fixed policy, the value process is no longer continuous because regime transitions introduce jumps in addition to the Brownian fluctuations of the state process. Its semimartingale decomposition therefore contains both continuous and discontinuous martingale components. This alters the predictable quadratic variation and introduces an additional contribution generated by regime changes. As a result, the connections among temporal-difference errors, quadratic variation, martingale loss, and martingale orthogonality cannot be obtained by a direct extension of the pure-diffusion theory. They must be re-established in a framework that explicitly accounts for the jump structure induced by the Markov chain.
This additional regime dependence also propagates into the policy-gradient analysis. The value functions associated with different regimes are coupled through the switching generator, and differentiating the policy-evaluation equations with respect to the policy parameters preserves this cross-regime coupling. Hence, the regime-switching terms are indispensable in the theoretical derivation. At the same time, a model-free implementation should avoid explicit estimation of the switching generator. This creates a nontrivial gap between the analytical representation and the learning rule. When the switching mechanism is independent of both the action and the policy parameters, the action-independent generator terms can be removed from the score-weighted representation, while the effects of realized regime jumps are already embedded in the observed increments of the value process. Establishing this separation is essential for deriving a trajectory-based PG representation that remains model-free despite the presence of regime-switching.
A further difficulty appears when the continuous-time theory is translated into implementable learning algorithms. The relaxed process used for the analysis is an averaged object, whereas the data available to the learner consist of state, regime, and randomized action trajectories. The randomized implementation must preserve the environmental martingale structure and the prescribed switching compensators, and regime-dependent predictable quantities must be evaluated with respect to the pre-jump regime. Moreover, finite-grid sampling of actions does not automatically reproduce the occupation law of the continuous-time relaxed model. Therefore, a separate consistency argument is required to justify the passage from the continuous-time PE and PG representations to discretized actor-critic algorithms.

Motivated by these issues, this paper develops a unified theory of PE and PG for continuous-time Markov regime-switching stochastic control. We consider a controlled diffusion whose drift, diffusion, and reward may depend on a finite-state exogenous Markov chain. The control is generated by a parameterized stochastic policy, while the switching generator is assumed to be independent of the control and policy parameters. Our analysis is carried out in continuous-time, and time discretization is introduced only at the implementation stage.

The contributions of this paper follow the progression from policy evaluation to policy improvement and, ultimately, to implementable actor-critic learning. We begin by establishing a martingale characterization of PE for stochastic policies under Markov regime-switching. The resulting formulation incorporates both Brownian and regime-transition martingales, which allows the martingale loss and martingale orthogonality approaches to be extended to the hybrid diffusion-jump setting. This analysis also identifies the additional contribution of regime transitions to the predictable quadratic variation and clarifies how the PE theory differs from its pure-diffusion counterpart. Building on this foundation, we derive the policy-gradient equation for the coupled regime-dependent value functions and obtain a probabilistic representation of the gradient. By combining the score identity with the semimartingale decomposition of the value process, the generator-based expression is transformed into a trajectory-based representation that avoids explicit evaluation of the drift, diffusion coefficients, and switching generator. These PE and PG results are then integrated into a continuous-time actor-critic framework. The critic is constructed from the PE martingale conditions, whereas the actor is driven by the trajectory-based PG representation in the offline setting and by first-order martingale orthogonality conditions in the online setting. To connect the continuous-time theory with practical implementation, we further analyze the finite-grid approximation and specify the predictability and the Assumption \ref{assum:occupation-matching} under which discretely sampled randomized controls are consistent with the underlying continuous-time formulation.

The remainder of the paper is organized as follows. Section \ref{sec:problem-formulation} presents the problem formulation and preliminaries. Section \ref{sec:PE} develops policy evaluation under Markov regime-switching. Section \ref{sec:PG} establishes the corresponding policy gradient theory. Section \ref{sec:Actor-Critic-Algorithms} constructs offline and online actor--critic algorithms based on the PE and PG results. Section \ref{sec:Applications-and-Numerical-Experiments} applies the proposed method to a regime-switching linear--quadratic control problem and reports numerical results. Section \ref{sec:Conclusions} concludes the paper.

\section{Problem formulation and Preliminaries}\label{sec:problem-formulation}

For a fixed time horizon $T>0$, let $(\Omega, \mathcal{F}, \mathbb{F}, \mathbb{P})$ be a filtered probability space with the filtration $\mathbb{F}:=\{\mathcal{F}_t\}_{t \in[0, T]}$ generated by a standard $n$-dimensional Brownian motion $W=\{W_t\}_{t \in[0, T]}$ and a continuous-time irreducible Markov chain $\alpha=\{\alpha_t\}_{t \in[0, T]}$ with a finite state space $\mathcal{M}=\{1, 2, \ldots, L\}$ such that $W_t$ and $\alpha_t$ are independent of each other. We assume the generator of the Markov chain $\alpha$ is $Q = (q_{ij})_{L \times L}$. Throughout the basic formulation, the regime process is assumed to be exogenous, meaning that its transition intensities do not depend on the control action.  For $i, j \in \mathcal{M}, i \neq j$, let $N_{i j}(t)$ denote the number of transitions of the Markov chain $\alpha$ from state $i$ to state $j$ up to time $t$. Define the compensated transition-counting process by
$\widetilde{N}_{i j}(t):=N_{i j}(t)-\int_0^t \mathbf{1}_{\{\alpha_{s-}=i\}} q_{i j} d s
$.
Then, for each $i \neq j, \widetilde{N}_{i j}$ is an $(\mathbb{F}, \mathbb{P})$-local martingale. %The total number of jumps into state $j$ can equivalently be written as $\Phi_j(t)=\sum_{i \neq j} N_{i j}(t)$, with compensated process $\widetilde{\Phi}_j(t)=\sum_{i \neq j} \widetilde{N}_{i j}(t)$.
%Let $\Phi_j(t)$ be the number of jumps into state $j$ up to time $t$ and set $\widetilde{\Phi}_j(t):= \Phi_j(t)-\int_0^t \sum_{i\neq j}^L q_{ij} 1_{\{\alpha_{s-=i}\}}(s) ds$. Then for each $j\in \mathcal{M},$ the process $\widetilde{\Phi}_j(t)$ is an $(\mathbb{F}, \mathbb{P})$-martingale.
For $i, j \in \mathcal{M}$ and $t \geq s \geq 0$, let $p_{i j}(t, s)$ denote the transition probability $P(\alpha_t=j | \alpha_s=i)$.

We consider the following stochastic differential equation (SDE)
\begin{equation}\label{eq:abstractmodel}
d X_s^{a}=b(s, X_s^{a}, \alpha_s, a_s) d s+\sigma(s, X_s^{a}, \alpha_s, a_s) d W_s,\quad s \in[t, T],
\end{equation}
where $a=\{a_s\}_{s\in [t, T]}$ is an $\mathbb{F}$-adapted process taking values on $\mathcal{A}$, representing the agent's action or control at time $s$. Here, $b:[0, T] \times \mathbb{R}^d \times \mathcal{M}\times \mathcal{A} \mapsto \mathbb{R}^d$, $\sigma:[0, T] \times \mathbb{R}^d \times \mathcal{M} \times \mathcal{A} \mapsto \mathbb{R}^{d \times n}$ are assumed to be measurable functions.
For each initial time-state-regime triple $(t, x, i) \in [0, T) \times \mathbb{R}^d \times \mathcal{M}$, the agent is interested in the stochastic control problem of finding the optimal control $a=\{a_s\}_{s\in [0,T]}$ that maximizes the following expected discounted total reward:
\begin{equation}\label{eq:objective}
\mathbb{E}\left[\int_t^T e^{-\beta(s-t)} r(s, X_s^{a}, \alpha_s, a_s) d s+ e^{-\beta(T-t)}h(X_T^{a}) \mid X_t^{a}=x, \alpha_t=i\right],
\end{equation}
where $r$ is \textit{the running reward function}, $h$ is \textit{the terminal reward function} applied at the end of the period $T$, and $\beta  \geq 0$ is a constant discount factor.

First, we define the test function class
\begin{align*}
\mathscr{D} :=& \Big\{ f:[0,T]\times\mathbb R^d\times\mathcal{M}\to\mathbb R:\;  f(\cdot,\cdot,i) \in C([0,T]\times\mathbb R^d) \cap C^{1,2}([0,T)\times\mathbb R^d),
i\in\mathcal{M}, \text{ and }\\
&\text{ there exists a compact set } K_f\subset\mathbb R^d
\text{ such that } f(t,x,i)=0 \text{ for all } x\notin K_f,\; (t,i)\in[0,T]\times\mathcal{M} \Big\}.
\end{align*}
Thus, every $f\in\mathscr{D}$ is continuously differentiable in the time and twice continuously differentiable in the state variable on $[0,T)\times\mathbb R^d$, is continuous up to the terminal time $T$, and has compact support in the state variable uniformly over $(t,i)\in[0,T]\times\mathcal{M}$.

For any action $a \in \mathcal{A}$ and test function $f(\cdot, \cdot, i) \in \mathscr D$, the extended infinitesimal generator $\mathcal{L}^a$ associated with the regime-switching diffusion $(s, X_s, \alpha_s)$ governed by \eqref{eq:abstractmodel} is defined by:
\begin{equation}\label{eq:the-controlled-extended-generator}
\begin{aligned}
\mathcal{L}^a f(t, x, i):= & \frac{\partial f}{\partial t}(t, x, i)+b(t, x, i, a)^{\top} \frac{\partial f}{\partial x}(t, x, i) \\
& +\frac{1}{2} \operatorname{tr}\left[\sigma(t, x, i, a) \sigma(t, x, i, a)^{\top} \frac{\partial^2 f}{\partial x^2}(t, x, i)\right]+\sum_{j=1}^L q_{i j}f(t, x, j),
\end{aligned}
\end{equation}
for $(t, x, i) \in[0, T) \times \mathbb{R}^d \times \mathcal{M}$. Here,
$\frac{\partial f}{\partial x}(t, x, i) \in \mathbb{R}^d$
is the gradient,
$\frac{\partial^2 f}{\partial x^2}(t, x, i) \in \mathbb{R}^{d \times d}$
is the Hessian. To ensure that the above stochastic control problem %\eqref{eq:abstractmodel}–\eqref{eq:objective}
is well-posed, we impose the following assumptions:
\begin{assumption}\label{assumption}
The following conditions for the state dynamics and reward functions hold true:
\begin{itemize}
\item[(i)] $b, \sigma, r, h$ are all continuous functions in their respective arguments;
\item[(ii)] $b, \sigma$ are uniformly Lipschitz continuous in $x$, i.e., for $\varphi \in\{b, \sigma\}$, there exists a constant $C>0$ such that
\begin{equation*}
|\varphi(t, x, i, a)-\varphi\left(t, x^{\prime}, i, a\right)| \leq C|x-x^{\prime}|, \;\; \forall(t, i, a) \in[0, T] \times \mathcal{M} \times \mathcal{A}, \;\; \forall x, x^{\prime} \in \mathbb{R}^d ;
\end{equation*}
\item[(iii)] $b, \sigma$ have linear growth in $x$, i.e., for $\varphi \in\{b, \sigma\}$, there exists a constant $C>0$ such that
\begin{equation*}
|\varphi(t, x, i, a)| \leq C(1+|x|), \quad \forall(t, x, i, a) \in[0, T] \times \mathcal{M} \times \mathbb{R}^d \times \mathcal{A};
\end{equation*}
\item[(iv)] $r$ and $h$ have polynomial growth in $(x, i, a)$ and $x$ respectively, i.e., there exists constants $C>0$ and $c\geq 1$ such that
\begin{equation*}
|r(t, x, i, a)| \leq C(1+|x|^{c}+|a|^{c}), \; |h(x)| \leq C(1+|x|^{c}),\; \forall(t, x, i, a) \in[0, T] \times \mathcal{M} \times \mathbb{R}^d \times \mathcal{A}.
\end{equation*}
\end{itemize}
\end{assumption}

We now introduce the RL formulation of the problem to be studied in this paper. In what follows, we write $\mathcal{P}(\mathcal{A})$ for the set of probability densities on the action space $\mathcal{A}$. Similar to the approach in \citet{jia2022policyaa,jia2022policymm,jia2023q}, we model exploration by sampling the action or control $a$ according to a stochastic policy $\pi = \{\pi(a|t, X_t, \alpha_t)\}_{t \geq 0}$, where $\pi(\cdot \mid t, x, i) \in \mathcal{P}(\mathcal{A})$. Let $(E, \mathcal{B}(E))$ be a Borel space and introduce an auxiliary probability space $\left(\Omega^{\varsigma}, \mathcal{F}^{\varsigma}, \mathbb{P}^{\varsigma}\right)$, together with an $E$-valued random variable $\varsigma$ and a measurable mapping $\phi:[0, T] \times \mathbb{R}^d \times \mathcal{M} \times E \rightarrow \mathcal{A}$
such that, for every $(t, x, i)$,
$\phi(t, x, i, \varsigma) \sim \pi(\cdot \mid t, x, i) $. Let $\mathbb{N}_0=\mathbb{N} \cup\{0\}$ and let $\left(\Omega^{\varsigma_{k}}, \mathcal{F}^{\varsigma_{k}}, \mathbb{P}^{\varsigma_{k}}, \varsigma_{k}\right)_{k \in \mathbb{N}_0}$ be independent copies. We extend the original space to
\begin{equation}\label{eq:product-probability-space}
\left(\Omega^{\prime}, \mathcal{F}^{\prime}, \mathbb{Q}\right) := \left(\Omega \times \prod_{k=0}^{\infty} \Omega^{\varsigma_{k}}, \mathcal{F} \otimes \bigotimes_{k=0}^{\infty} \mathcal{F}^{\varsigma_{k}}, \mathbb{P} \otimes \bigotimes_{k=0}^{\infty} \mathbb{P}^{\varsigma_{k}}\right).
\end{equation}
Consider the time grid $G_{t: T}:=\left\{t=s_0<s_1<\cdots<s_{n_0}=T\right\}$.
Let $\mathcal{F}_s^{\varsigma} := \sigma(\{\varsigma_k : s_k \leq s\})$ be the $\sigma$-algebra generated by the exploration noise up to time $s$, and define the augmented filtration $\mathbb{F}^{\prime}=(\mathcal{F}_s \vee \mathcal{F}_s^{\varsigma})_{s \in[t, T]}$. At each decision time $s_k$, the randomized action is generated by $
a_{s_k}=\phi\left(s_k, X_{s_k}, \alpha_{s_k}, \varsigma_k\right)$,
so that, conditionally on the current state and regime, $a_{s_k} \sim \pi\left(\cdot \mid s_k, X_{s_k}, \alpha_{s_k}\right)$.
Moreover, $a_{s_k}$ is $\mathcal{F}_{s_k}^{\prime}$-measurable. For the finite-grid implementation, the sampled action is held fixed over each decision interval, $a_s=a_{s_k}$, $s \in[s_k, s_{k+1})$.
Since the exploration variables $\{\varsigma_k\}_{k \geq 0}$ are independent of the original stochastic system, enlarging the probability space in this manner does not alter the environmental dynamics. In particular, $W$ remains an $n$-dimensional Brownian motion with respect to the enlarged filtration $\mathbb{F}^{\prime}=(\mathcal{F}_s^{\prime})_{s \in[t, T]}$, while $\alpha$ remains a continuous-time Markov chain with the same generator $Q$. Thus, the product-space construction introduces the additional randomness required to implement the stochastic policy without changing the underlying Brownian or regime-switching dynamics.

The agent samples actions according to the policy $\pi$ exclusively at the grid points and holds them constant over the adjacent intervals. To rigorously establish the exploratory state process under discrete-time sampling, we first provide its piecewise definition in integral form on the time grid. Specifically, for all $k=0, 1, \ldots, n_0-1$ and $s \in\left[s_k, s_{k+1}\right]$, the state process with exploratory actions, denoted by $X_s^\pi$, is strictly defined by the following integral equation:
\begin{equation}\label{eq:X_s_pi}
X_s^\pi = X_{s_k}^\pi + \int_{s_k}^s b\left(\ell, X_{\ell}^\pi, \alpha_{\ell}, a_{s_k}\right) d \ell + \int_{s_k}^s \sigma\left(\ell, X_{\ell}^\pi, \alpha_{\ell}, a_{s_k}\right) d W_{\ell},
\end{equation}
with the initial conditions $X_{s_0}^\pi = x$ and $\alpha_{s_0} = i$, and the sampled action at the grid points given by $a_{s_k} = \phi\left(s_k, X_{s_k}^\pi, \alpha_{s_k}, \varsigma_k\right)$. Equivalently, define the piecewise constant control process
$a_s^\pi=\sum_{k=0}^{n_0-1} a_{s_k} \mathbf{1}_{(s_k, s_{k+1}]}(s)$, $s \in(t, T]$,
with $a_t^\pi=a_{s_0}$. Since each $a_{s_k}$ is $\mathcal{F}_{s_k}^{\prime}$-measurable, $a^\pi$ is an $\mathbb{F}^{\prime}$-predictable step process. The state dynamics over the entire horizon can therefore be written compactly as
\begin{equation}\label{eq:dX_s_pi}
d X_s^\pi =  b(s, X_s^\pi, \alpha_s, a_s^\pi) d s + \sigma(s, X_s^\pi, \alpha_s, a_s^\pi) d W_s.
\end{equation}
And the objective functional is defined by
\begin{equation}\label{eq:value-fun(X_s_pi)}
J^0(t, x, i; \pi)=\mathbb{E}\Bigg[\int_t^T e^{-\beta(s-t)} r(s, X_s^{\pi}, \alpha_s, \pi(a | s, X_s^{\pi}, \alpha_s)) d s+e^{-\beta(T-t)} h(X_T^{\pi}) | X_t^{\pi}=x,\alpha_t=i\bigg].
\end{equation}

The grid-based sampled-and-held process in \eqref{eq:dX_s_pi} provides a realization of the randomized policy, but it is not the appropriate continuous-time Markov representation for deriving the exploratory HJB equation directly from the dynamic programming principle. Indeed, between two consecutive decision times, the state dynamics depend on the action sampled at the previous grid point. Consequently, the pair $(X_s^\pi,\alpha_s)$ alone does not encode the complete local dynamics of the finite-grid implementation. To obtain a Markovian representation suitable for continuous-time dynamic programming, we therefore introduce the relaxed state process obtained by averaging the local characteristics under the stochastic policy.

For a given policy $\pi$, define the relaxed drift and covariance by
\begin{equation}\label{eq:the-relaxed-drift-covariance}
\tilde{b}(t, x, i, \pi):=\int_{\mathcal{A}} b(t, x, i, a) \pi(d a \mid t, x, i),\quad
\tilde{c}(t, x, i, \pi):=\int_{\mathcal{A}} \sigma(t, x, i, a) \sigma(t, x, i, a)^{\top} \pi(d a \mid t, x, i) .
\end{equation}
The covariance, rather than the diffusion coefficient itself, is averaged because the second-order term in the infinitesimal generator is determined by the instantaneous quadratic covariance $\sigma \sigma^{\top}$. Let $\widetilde{\sigma}(t, x, i, \pi)$ be a measurable matrix-valued function satisfying
$\widetilde{\sigma}(t, x, i, \pi) \widetilde{\sigma}(t, x, i, \pi)^{\top}=\tilde{c}(t, x, i, \pi)$.
A canonical choice is the symmetric nonnegative square root, $\widetilde{\sigma}(t, x, i, \pi)=[\tilde{c}(t, x, i, \pi)]^{1 / 2}$.
Thus, \textit{the exploratory state dynamics} with $\pi$ is then defined, for $s \in[t, T]$,
\begin{equation}\label{eq:state-Exp}
d \widetilde{X}_s^\pi=\widetilde{b}(s, \widetilde{X}_s^\pi,\alpha_s,\pi(\cdot| s, \widetilde{X}_s^\pi, \alpha_s)) d s+\widetilde{\sigma}(s, \widetilde{X}_s^\pi,\alpha_s,\pi(\cdot| s, \widetilde{X}_s^\pi, \alpha_s)) d \widetilde{W}_s,
\end{equation}
with $\tilde{X}_t=x$, $\alpha_t=i$. Here $\widetilde{W}$ denotes a Brownian motion on the probability space supporting the relaxed state process. The use of the pre-jump regime $\alpha_{s-}$ ensures predictability of the regime-dependent stochastic integrand. %Since the regime-switching process is exogenous in the present formulation, its generator $Q$ is unaffected by policy randomization.

\begin{lemma}[Well-posedness of the relaxed dynamics]
Under Assumption \ref{assumption} and for any admissible policy $\pi$, \eqref{eq:state-Exp} admit a solution for every $(t, x, i) \in[0, T] \times \mathbb{R}^d \times \mathcal{M}$. Its law is denoted by $\mathbb{P}_{t, x, i}^\pi$.
If, in addition, $\widetilde{b}$ and $\widetilde{c}$ satisfy the standard Lipschitz and growth conditions, the solution is unique in law.
\end{lemma}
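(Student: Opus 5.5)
The plan is to treat \eqref{eq:state-Exp} as a regime-switching SDE with merely continuous coefficients of linear growth. Existence of a weak solution will come from the martingale problem for the hybrid generator
\[
\widetilde{\mathcal L}^\pi f(t,x,i)=\partial_t f+\widetilde b^{\top}\partial_x f+\tfrac12\operatorname{tr}\bigl[\widetilde c\,\partial_{xx}f\bigr]+\sum_{j}q_{ij}f(t,x,j),
\qquad f\in\mathscr D .
\]
The argument runs on a fixed path of the chain, then lifts to the joint process.

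\textbf{Step 1: coefficient estimates.} By Assumption \ref{assumption}(iii),
\[
|\widetilde b(t,x,i,\pi)|\le C(1+|x|),\qquad |\widetilde c(t,x,i,\pi)|\le C^2(1+|x|)^2 .
\]
Hence the symmetric square root satisfies $|\widetilde\sigma|\le C'(1+|x|)$. Admissibility of $\pi$ is interpreted as measurability of $(t,x,i)\mapsto\pi(\cdot|t,x,i)$, together with enough weak continuity that, by Assumption \ref{assumption}(i) and dominated convergence, $\widetilde b$ and $\widetilde c$ are continuous in $(t,x)$. Continuity of $\widetilde c$ passes to $\widetilde\sigma=\widetilde c^{1/2}$, since the square root is continuous on nonnegative definite matrices.

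\textbf{Step 2: existence.} Take $(W,\alpha)$ independent with $\alpha_t=i$, and let $\tau_1<\tau_2<\cdots$ be the (a.s.\ finitely many) jump times of $\alpha$ in $[t,T]$. On each interval $[\tau_k,\tau_{k+1})$ the regime is frozen at some $j$, so the equation is an ordinary diffusion with continuous, linearly growing coefficients $\widetilde b(\cdot,\cdot,j)$ and $\widetilde\sigma(\cdot,\cdot,j)$. Skorokhod's theorem (via Stroock--Varadhan tightness, using moment bounds from linear growth) gives a weak solution on each piece. These pieces are concatenated at the jump times, conditioning on the chain and starting each piece from the terminal value of the previous one. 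An alternative that avoids measurable-selection issues in the concatenation is to approximate $\widetilde b,\widetilde\sigma$ by Lipschitz mollifications. For these the regime-switching SDE has a strong solution by the standard Picard argument with $\alpha$ as an exogenous input. Uniform estimates $\mathbb E\sup_s|X^n_s|^p\le C(1+|x|^p)$ and Kolmogorov increments then give tightness of the laws of $(X^n,\alpha,W)$ on $C\times D\times C$. Passing to the limit in the martingale problem for $\widetilde{\mathcal L}^\pi$ uses uniform convergence of the coefficients on compacts. The chain component, its generator $Q$, and its independence from $W$ are preserved in the limit because its law does not depend on $n$. A martingale representation argument then yields a Brownian motion $\widetilde W$ realizing \eqref{eq:state-Exp}, on an enlarged space if $\widetilde c$ is degenerate. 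The law of the resulting process is denoted $\mathbb P^\pi_{t,x,i}$.

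\textbf{Step 3: uniqueness.} If $\widetilde b$ and $\widetilde\sigma$ are Lipschitz in $x$ with linear growth, pathwise uniqueness holds. This follows from Itô's formula for $|X_s-X'_s|^2$ driven by the same $(\widetilde W,\alpha)$ together with Gronwall's lemma, and the jumps of $\alpha$ cause no difficulty since $X$ is continuous and $\alpha$ is common to both solutions. Yamada--Watanabe then gives uniqueness in law. The statement's hypothesis is on $\widetilde c$, and Lipschitz continuity of $\widetilde c$ does not give Lipschitz continuity of $\widetilde c^{1/2}$ unless $\widetilde c$ is uniformly elliptic. If it is not, I would either read the standard conditions as imposed on $\widetilde\sigma$, or use well-posedness of the martingale problem for $(\widetilde b,\widetilde c)$ directly. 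The latter rests on the fact that the law is determined by the generator, not by the choice of square root.

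\textbf{Main obstacle.} The delicate part is the limit passage in Step 2. One must check that the limit chain is still Markov with generator $Q$ independent of $\widetilde W$, and handle the pre-jump regime $\alpha_{s-}$ in the integrands. I would settle this through the martingale problem for the joint generator, where test functions of the form $f(t,x,i)$ with $f\in\mathscr D$ capture both components.
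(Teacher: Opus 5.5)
Your proposal is correct, but it follows a different route from the paper's. The paper gives no separate proof of this lemma. Unique solvability of \eqref{eq:state-Exp} is in fact built into Definition \ref{def:admissible}(ii). The argument it does supply is in Lemma \ref{lem:growth-condition}(ii), in three moves:
\begin{itemize}
\item It derives local Lipschitz continuity of $\widetilde b$ from the total-variation Lipschitz property of $\pi$ in Definition \ref{def:admissible}(iii), together with the Lipschitz and growth bounds on $b$.
\item It asserts the same for $\widetilde\sigma$ by appeal to a ``strict ellipticity'' that is not among the stated hypotheses.
\item It then invokes the classical theorem for locally Lipschitz coefficients with linear growth, which yields a unique strong solution in one stroke.
\end{itemize}

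You instead separate the two claims. Existence comes by compactness: Skorokhod, or mollification plus tightness plus the martingale problem for $\widetilde{\mathcal L}^\pi$ and a representation step. This needs only continuity and linear growth of $\widetilde b,\widetilde c$, and no ellipticity at all. Your Step 1 is in fact immediate, since Definition \ref{def:admissible}(iii) gives continuity in total variation, not merely weakly. Uniqueness in law then comes from pathwise uniqueness plus Yamada--Watanabe under the additional Lipschitz hypothesis.

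The paper's route is shorter and produces a strong solution driven by the given Brownian motion, which is the form it relies on later. However, it is valid only under the unstated nondegeneracy. Your route is correct under the hypotheses actually stated. It also mirrors the structure of the lemma (unconditional existence, uniqueness in law under extra conditions) and its ``law'' language. The cost is more machinery, and only a weak solution on a possibly enlarged space. Your observation that Lipschitz continuity of $\widetilde c$ yields only a $\tfrac12$-H\"older square root is exactly the point the paper glosses over.

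Two small cautions:
\begin{itemize}
\item Mollifications of continuous functions of linear growth are only locally Lipschitz, with gradients growing like $n(1+|x|)$. The ``standard Picard'' step therefore needs a cutoff or localization, though linear growth still rules out explosion.
\item Your fallback of invoking well-posedness of the martingale problem for $(\widetilde b,\widetilde c)$ is not an argument by itself, because that well-posedness is precisely what is in question. For degenerate, merely Lipschitz $\widetilde c$ in dimension $d\ge 2$, the standard theorems do not supply it.
\end{itemize}
The defensible options are to read the ``standard'' conditions as imposed on $\widetilde\sigma$, or to add hypotheses under which $\widetilde c^{1/2}$ is Lipschitz. Either uniform ellipticity (the paper's tacit assumption) or $C^2$ regularity of $\widetilde c$ in $x$ with bounded second derivatives suffices.
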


To promote exploration while retaining the original reward criterion, we augment the instantaneous reward by a policy-dependent regularization term. Let
$p:[0, T] \times \mathbb{R}^d \times \mathcal{M} \times \mathcal{A} \times \mathcal{P}(\mathcal{A}) \rightarrow \mathbb{R}$
be a regularizer, and let $\gamma \geq 0$ denote its weighting parameter. For a stochastic policy $\pi$, define the regularized instantaneous reward by
$r(t, x, i, a)+\gamma p(t, x, i, a, \pi(\cdot \mid t, x, i)) $.
Averaging over the randomized action yields the relaxed running reward
$$
\mathcal{R}(t, x, i, \pi):=\int_{\mathcal{A}}[r(t, x, i, a)+\gamma p(t, x, i, a, \pi(\cdot \mid t, x, i))] \pi(d a \mid t, x, i) .
$$
Accordingly, for an admissible policy $\pi$, its value function is defined as
\begin{equation}\label{eq:value-fun(MC)}
J(t, x, i; \pi)=\mathbb{E}\bigg[\int_t^T e^{-\beta(s-t)} \mathcal{R}(s, \tilde{X}_s^{\pi}, \alpha_s, \pi(a | s, \tilde{X}_s^{\pi}, \alpha_s)) d s +e^{-\beta(T-t)} h(\tilde{X}_T^{\pi}) \big| \tilde{X}_t^{\pi}=x,\alpha_t=i\bigg].
\end{equation}
In the above, $\gamma \geq 0$ a weighting parameter on exploration, also known as the temperature parameter. When $\gamma=0$, the criterion reduces to the standard expected discounted reward. A commonly used choice is the entropy regularizer
$p(t, x, a, \pi(\cdot))=-\log \pi(a)$.
Then the extended infinitesimal generator of the relaxed process is given by
\begin{align}\label{eq:the-extended-infinitesimal-generator}
\widetilde{\mathcal{L}}^\pi f(t, x, i)
:=& \frac{\partial f}{\partial t}(t, x, i)+\tilde{b}(t, x, i, \pi)^{\top} \frac{\partial f}{\partial x}(t, x, i)  +\frac{1}{2} \operatorname{tr}\left[\tilde{c}(t, x, i, \pi) \frac{\partial^2 f}{\partial x^2}(t, x, i)\right]  +\sum_{j=1}^L q_{i j}f(t, x, j) \notag\\
= & \frac{\partial f}{\partial t}(t, x, i)  +\int_{\mathcal{A}} b(t, x, i, a)^{\top} \frac{\partial f}{\partial x}(t, x, i) \pi(a \mid t, x, i) d a \notag\\
& +\frac{1}{2} \int_{\mathcal{A}} \operatorname{tr}\left[\sigma(t, x, i, a) \sigma(t, x, i, a)^{\top} \frac{\partial^2 f}{\partial x^2}(t, x, i)\right] \pi(a \mid t, x, i) d a +\sum_{j=1}^L q_{i j}f(t, x, j)\notag\\
=&\int_{\mathcal{A}} \mathcal{L}^a f(t, x, i) \pi(a \mid t, x, i) d a .
\end{align}
The task of RL is to find the optimal policy $\pi^*$ and its value function $J(t,x,i;\pi^*)$ such that
%\textit{The optimal value function} is then
\begin{equation}\label{eq:opt-value-fun}
V(t, x, i):=J(t,x,i;\pi^*)=\sup_{\pi \in \Pi} J(t, x, i; \pi),
\end{equation}
where $\Pi$ stands for the set of admissible (stochastic) policies on $\mathcal{A}$ and $V(t, x, i)$ is called \textit{the optimal value function}. We next give the precise definition of an admissible (feedback) policy.

\begin{definition}\label{def:admissible}
A policy $\pi=\pi(\cdot | \cdot, \cdot, \cdot)$ is called admissible if
\begin{itemize}
\item[(i)] $\pi(\cdot | t, x, i) \in \mathcal{P}(\mathcal{A}), supp \,\pi(\cdot | t, x, i)=\mathcal{A}$ for every $(t, x, i) \in[0, T] \times \mathbb{R}^d \times \mathcal{M}$ and $\pi(a | t, x, i):(t, x, i, a) \in[0, T] \times \mathbb{R}^d \times \mathcal{M}\times \mathcal{A} \mapsto \mathbb{R}$ is measurable;
\item[(ii)] the SDE \eqref{eq:state-Exp} has a unique solution for initial $(t, x, i)\in [0,T]\times\mathbb{R}^d \times \mathcal{M}$;
\item[(iii)] $\boldsymbol{\pi}(a | t, x, i)$ is continuous in $(t, x)$ and uniformly Lipschitz continuous in $x$ in the total variation distance, i.e., for any $i\in \mathcal{M}, \lim_{(t^{\prime}, x^{\prime}) \rightarrow (t, x)}\int_{\mathcal{A}}|\pi(a | t, x, i)-\pi(a | t^{\prime}, x^{\prime}, i)| d a =0$, and there is a constant $C>0$ independent of $(t, x, i)$ such that
\begin{equation*}
\int_{\mathcal{A}}|\pi(a | t, x, i)-\pi(a | t, x^{\prime}, i)| d a \leqslant C|x-x^{\prime}|, \quad \forall x, x^{\prime} \in \mathbb{R}^d, \quad i\in \mathcal{M};
\end{equation*}
\item[(iv)] for any $i\in \mathcal{M}$, $\int_{\mathcal{A}}|r(t, x, i, a)+\gamma p(t, x, a, \boldsymbol{\pi}(\cdot \mid t, x, i))| \boldsymbol{\pi}(a \mid t, x, i) \mathrm{d} a \leq C\left(1+|x|^\mu\right), \forall(t, x)$ where $C>0$ and $\mu \geq 1$ are constants;

\end{itemize}
\end{definition}

Under the above conditions, we obtain the following result.

\begin{lemma}\label{lem:growth-condition}
Let Assumptions \ref{assumption} hold and $\boldsymbol{\pi}$ be a given admissible policy. Consider $(t, x) \in[0, T] \times \mathbb{R}^n, \pi \in \Pi_t$ and $\left(\Omega^{\prime}, \mathcal{F}^{\prime}, \mathbb{F}^{\prime}, \mathbb{Q}\right)$ which is the probability space specified in \eqref{eq:product-probability-space}. Then, we have
\begin{itemize}
\item[(i)] for any grid $G_{t: T}$, the $\operatorname{SDE}$ \eqref{eq:dX_s_pi} with $X_t^\pi=x, \alpha_t=i$ has a unique strong solution. Moreover, for all $k \geq 2$, there exists a constant $C_k>0$ such that
\begin{equation*}
\mathbb{E}[|X_s^\pi|^k|X_t^\pi=x,\alpha_t=i] \leq C_k(1+|x|^k) e^{C_k s},
\end{equation*}
for any grid $G_{t: T}$ and $s \in[t, T]$.
\item[(ii)] the $S D E$ \eqref{eq:state-Exp} admits a unique strong solution that
satisfies the following growth condition
\begin{equation*}
    \mathbb{E}\left[\max _{t \leq s \leq T}|\tilde{X}_s^\pi|^n | \tilde{X}_t^\pi=x, \alpha_t=i\right] \leq C\left(1+|x|^n\right), \quad n\geq 2,
\end{equation*}
for some constant $C=C(n,i)$. Finally, the expected payoff \eqref{eq:value-fun(MC)} is finite.
\end{itemize}
\end{lemma}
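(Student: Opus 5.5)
For part (i), I will argue interval by interval on the grid $G_{t:T}$ and proceed by induction on $k$. Suppose $X^\pi_{s_k}$ and $\alpha_{s_k}$ are $\mathcal F'_{s_k}$-measurable with finite moments of every order. Then $a_{s_k}=\phi(s_k,X^\pi_{s_k},\alpha_{s_k},\varsigma_k)$ is $\mathcal F'_{s_k}$-measurable. On $[s_k,s_{k+1}]$, equation \eqref{eq:X_s_pi} is an SDE with random coefficients $(\ell,x,\omega)\mapsto b(\ell,x,\alpha_\ell(\omega),a_{s_k}(\omega))$, and similarly for $\sigma$. These coefficients are progressively measurable in $(\ell,\omega)$, because $\alpha$ is adapted and $a_{s_k}$ is frozen. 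By Assumption \ref{assumption}(ii)–(iii) they are Lipschitz and of linear growth in $x$, with constants independent of $(\ell,i,a)$. Since $W$ remains an $\mathbb F'$-Brownian motion, the standard Picard/Itô theory for SDEs with random Lipschitz coefficients gives a unique strong solution on each interval, and concatenation yields the solution of \eqref{eq:dX_s_pi}. Pathwise uniqueness on each interval passes to the whole horizon, again by induction.

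For the moment bound in part (i), I will apply Itô's formula to $|X^\pi_s|^k$ for $k\ge2$, using the compact form \eqref{eq:dX_s_pi}. Localizing with stopping times $\tau_m=\inf\{s:|X^\pi_s|\ge m\}$, the linear growth bound $|b|+|\sigma|\le C(1+|x|)$ gives
\[
\mathbb E|X^\pi_{s\wedge\tau_m}|^k\le |x|^k+C_k\int_t^s\bigl(1+\mathbb E|X^\pi_{\ell\wedge\tau_m}|^k\bigr)d\ell .
\]
Gronwall then gives the bound $(|x|^k+C_k(s-t))e^{C_k(s-t)}\le C_k(1+|x|^k)e^{C_k s}$, and Fatou's lemma removes the localization. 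The key point is that the constant depends only on the constant $C$ of Assumption \ref{assumption} and not on the grid or the sampled actions, because the growth bound is uniform in $a\in\mathcal A$. This is exactly the claimed uniformity over $G_{t:T}$.

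For part (ii), the first step is to check that $\tilde b$ and $\tilde\sigma=\tilde c^{1/2}$ inherit growth and regularity. Linear growth is immediate: $|\tilde b|\le C(1+|x|)$ and $|\tilde c|\le C^2(1+|x|)^2$, so $|\tilde\sigma|\le C(1+|x|)$. The Lipschitz property of $\tilde b$ follows by splitting
\[
\tilde b(x)-\tilde b(x')=\int\bigl(b(x,a)-b(x',a)\bigr)\pi(da|x)+\int b(x',a)\bigl(\pi(da|x)-\pi(da|x')\bigr).
\]
The first term is bounded by $C|x-x'|$. The second is bounded by $C(1+|x'|)\,C|x-x'|$, using the total-variation Lipschitz property in Definition \ref{def:admissible}(iii). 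This gives local Lipschitz continuity with linear growth. The same argument applies to $\tilde c$. Local Lipschitz continuity of $\tilde c$ does not transfer to $\tilde c^{1/2}$ where $\tilde c$ degenerates. I therefore expect the main obstacle to be strong existence and uniqueness for the square root. Rather than fight this, I will invoke Definition \ref{def:admissible}(ii), which stipulates that \eqref{eq:state-Exp} has a unique solution. Combined with existence from the well-posedness lemma (weak existence from continuity and linear growth), Yamada–Watanabe upgrades this to a unique strong solution. Where a direct argument is wanted, I will use that regimes change only finitely often a.s. and solve the SDE between successive jump times of $\alpha$, where the regime is fixed.

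The growth estimate $\mathbb E[\max_{t\le s\le T}|\tilde X^\pi_s|^n]\le C(1+|x|^n)$ follows from the standard BDG argument. I will write $\tilde X_s=x+\int\tilde b\,d\ell+\int\tilde\sigma\,d\tilde W$, take the supremum, raise to the power $n$, and apply Hölder to the drift integral and BDG to the stochastic integral. Linear growth then gives
\[
\mathbb E\sup_{\ell\le s}|\tilde X_\ell|^n\le C\bigl(1+|x|^n+\textstyle\int_t^s\mathbb E\sup_{u\le \ell}|\tilde X_u|^n d\ell\bigr),
\]
after localization, and Gronwall concludes. Finally, finiteness of \eqref{eq:value-fun(MC)} follows from Definition \ref{def:admissible}(iv), which gives $|\mathcal R|\le C(1+|x|^\mu)$, and from Assumption \ref{assumption}(iv), which gives $|h|\le C(1+|x|^c)$. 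Applying the maximal moment bound with $n=\max(\mu,c,2)$, both the running integral and the terminal term are bounded by $C(1+|x|^{n})<\infty$.
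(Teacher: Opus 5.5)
Your argument is correct, but it departs from the paper's at four points.

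\emph{Existence in (i).} The paper merges the grid points with the jump times of $\alpha$ into an a.s.\ finite sequence of stopping times $\rho_0<\rho_1<\cdots<\rho_N$. On each stochastic interval both action and regime are frozen, so it solves a classical SDE conditionally on $\mathcal F'_{\rho_m}$ and pastes the pieces. You work only on the deterministic grid intervals and let the regime enter as a progressively measurable random coefficient $b(\ell,x,\alpha_\ell,a_{s_k})$. The standard theory for random Lipschitz coefficients covers this directly, and it avoids restarting the equation at random times.

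\emph{Moment bound in (i).} Your route (It\^o on $|X|^k$, localization, Gr\"onwall, Fatou) gives the same grid-uniform estimate as the paper's BDG/H\"older expansion. The localization supplies the a priori finiteness that the paper's Gr\"onwall step uses tacitly.

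\emph{Well-posedness in (ii).} The paper claims local Lipschitz continuity of $\widetilde\sigma=\tilde c^{1/2}$ by appealing to a ``strict ellipticity of $\Sigma$'' that appears neither in Assumption~\ref{assumption} nor in Definition~\ref{def:admissible}. You rightly note that, without nondegeneracy, the square root of a merely locally Lipschitz $\tilde c$ need not be locally Lipschitz, and you rely instead on Definition~\ref{def:admissible}(ii) plus Yamada--Watanabe. That is legitimate, but it makes well-posedness in (ii) essentially a hypothesis. The upgrade to a strong solution also requires reading ``unique solution'' in Definition~\ref{def:admissible}(ii) as pathwise uniqueness; uniqueness in law would not suffice. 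Your fallback of solving between jumps of $\alpha$ handles the regime coupling, but not the square-root regularity. For $d=1$ you can avoid the hypothesis entirely: $|\sqrt u-\sqrt v|\le\sqrt{|u-v|}$ makes $\widetilde\sigma$ locally $\tfrac12$-H\"older, and the Yamada--Watanabe criterion then gives pathwise uniqueness.

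\emph{Finiteness of the payoff.} You use Definition~\ref{def:admissible}(iv), which bounds $|\mathcal R|$ from both sides by $C(1+|x|^\mu)$. The paper instead combines compactness of $\mathcal A$ with an upper bound on the entropy, which controls the regularizer only from one side. Your version is the more complete of the two.
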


\begin{proof}
\begin{itemize}
\item[(i)] We first construct the sequence of stopping times corresponding to the discontinuity points of the coefficients. Let $\mathcal{T}_\delta := \{s_0, s_1, \ldots, s_{n_0}\}$ denote the deterministic time grid for action sampling. Let $\mathcal{T}_\alpha := \{\tau_m\}_{m \geq 1}$ be the sequence of jump times of the Markov chain $\alpha$, defined recursively by $\tau_m := \inf \{t > \tau_{m-1} : \alpha_t \neq \alpha_{\tau_{m-1}}\}$ with $\tau_0 = 0$. Let $\mathcal{T} := (\mathcal{T}_\delta \cup \mathcal{T}_\alpha) \cap [0, T]$ be the merged set of stopping times. Since the number of grid points is finite ($|\mathcal{T}_\delta| = n_0 + 1 < \infty$) and the finite-state Markov chain is non-explosive within a finite horizon (i.e., $\mathbb{P}(\lim_{m \to \infty} \tau_m \leq T) = 0$), the cardinality of $\mathcal{T}$ is almost surely finite. We order the elements of $\mathcal{T}$ in an ascending sequence of $\mathbb{F}^{\prime}$-stopping times:
$$
0 = \rho_0 < \rho_1 < \rho_2 < \dots < \rho_{N} = T, \,\, \text{where }\, \mathbb{P}(N < \infty) = 1.
$$
We construct the global strong solution using mathematical induction on the interval index $m \in \{0, 1, \ldots, N-1\}$. For $m=0$. On the stochastic interval $[\rho_0, \rho_1]$, both the control action $a_0^\pi = a_{s_0}$ and the regime $\alpha_0 = i$ remain constant and are $\mathcal{F}_{\rho_0}^{\prime}$-measurable. Equation \eqref{eq:dX_s_pi} restricted to this interval is equivalent to a standard SDE with parameters fixed given the initial filtration $\mathcal{F}_{\rho_0}^{\prime}$:
$$
X_s = x + \int_0^s b(\ell, X_\ell, i, a_0^\pi) d\ell + \int_0^s \sigma(\ell, X_\ell, i, a_0^\pi) dW_\ell.
$$
It follows from Assumption \ref{assumption}-(i) and (iii), there exists a unique continuous strong solution $X^{(0)}$ adapted to $\mathcal{F}_s^{\prime}$ such that $\mathbb{E}[\sup_{s \in [0, \rho_1]} |X_s^{(0)}|^2| X_t^\pi=x, \alpha_t=i] < \infty$.

For the inductive step, consider $m \in \{1, \ldots, N-1\}$. Suppose a unique continuous strong solution $X^\pi$ has been well-established on the interval $[0, \rho_m]$. On the subsequent stochastic interval $[\rho_m, \rho_{m+1}]$, we consider the initial value problem:
\begin{equation}\label{eq:lem_X_s}
X_s = X_{\rho_m}^\pi + \int_{\rho_m}^s b(\ell, X_\ell, \alpha_{\rho_m}, a_{\delta(\rho_m)}) d\ell + \int_{\rho_m}^s \sigma(\ell, X_\ell, \alpha_{\rho_m}, a_{\delta(\rho_m)}) dW_\ell,
\end{equation}
where the initial condition $X_{\rho_m}^\pi$, the initial regime $\alpha_{\rho_m}$, and the sampled action $a_{\delta(\rho_m)}$ are all $\mathcal{F}_{\rho_m}^{\prime}$-measurable random variables. Crucially, these variables are completely independent of the future Brownian increments $(W_s - W_{\rho_m})_{s \geq \rho_m}$. By the classical Itô existence and uniqueness theorem, conditional on $\mathcal{F}_{\rho_m}^{\prime}$, the SDE \eqref{eq:lem_X_s} admits a unique strong solution $X^{(m)}$ on $[\rho_m, \rho_{m+1}]$. By invoking the pasting lemma, we can construct the global solution over the entire horizon $[0, T]$ as:
$$
X_s^\pi := \sum_{m=0}^{N-1} X_s^{(m)} \mathbf{1}_{[\rho_m, \rho_{m+1})}(s) + X_T^{(N-1)} \mathbf{1}_{\{T\}}(s).
$$
Since the initial condition for each sub-interval matches the terminal value of the preceding one (i.e., $X_{\rho_m}^{(m)} = X_{\rho_m}^{(m-1)}$ a.s.), the concatenated path $s \mapsto X_s^\pi$ preserves almost sure continuity. Given that the total number of stopping times is finite almost surely ($\mathbb{P}(N < \infty) = 1$), this inductive pasting procedure terminates in a finite number of steps with probability 1, strictly establishing the global existence and uniqueness of the continuous strong solution on $[0, T]$.

To establish the moment estimate, we integrate the global compact SDE \eqref{eq:dX_s_pi} over $[t, s]$. Taking the conditional expectation of the $k$-th power ($k \geq 2$) and applying the elementary inequality $|a+b+c|^k \leq 3^{k-1}(|a|^k + |b|^k + |c|^k)$, we separate the initial state, drift, and diffusion components. Subsequently, we bound the drift term via Hölder's inequality, and the diffusion term by sequentially applying the Burkholder-Davis-Gundy (BDG) inequality and Hölder's inequality with respect to the Lebesgue measure. Consolidating these standard estimates directly yields
\begin{align*}
&\mathbb{E}\left[\left|X_s^\pi\right|^k \mid X_t^\pi=x, \alpha_t=i\right]\\
\leq & 3^{k-1}|x|^k + 3^{k-1}\mathbb{E}\left[\left|\int_t^s b(\ell, X_\ell^\pi, \alpha_\ell, a_\ell^\pi) d\ell\right|^k\mid X_t^\pi=x, \alpha_t=i\right] \\
&+ 3^{k-1}\mathbb{E}\left[\left|\int_t^s \sigma(\ell, X_\ell^\pi, \alpha_\ell, a_\ell^\pi) dW_\ell\right|^k\mid X_t^\pi=x, \alpha_t=i\right]\\
\leq & \ 3^{k-1}|x|^k + 3^{k-1}(s-t)^{k-1}\mathbb{E}\left[\int_t^s \left|b(\ell, X_\ell^\pi, \alpha_\ell, a_\ell^\pi)\right|^k d\ell \mid X_t^\pi=x, \alpha_t=i\right] \\
& + 3^{k-1} C_{BDG} (s-t)^{\frac{k}{2}-1}\mathbb{E}\left[\int_t^s \left|\sigma(\ell, X_\ell^\pi, \alpha_\ell, a_\ell^\pi)\right|^k d\ell \mid X_t^\pi=x, \alpha_t=i\right],
\end{align*}
where $C_{BDG} > 0$ is a universal constant depending only on $k$. It follows from Assumption \ref{assumption}-(iii) that there exists a generic constant $\bar{C} > 0$ such that
\begin{align*}
1 + \mathbb{E}\left[\left|X_s^\pi\right|^k\mid X_t^\pi=x, \alpha_t=i\right] \leq \bar{C}\left(1 + |x|^k\right) + \bar{C} \int_t^s \left(1 + \mathbb{E}\left[\left|X_\ell^\pi\right|^k\mid X_t^\pi=x, \alpha_t=i\right]\right) d\ell.
\end{align*}
By applying Grönwall's inequality, we deduce that
\begin{equation*}
\mathbb{E}[|X_s^\pi|^k \mid X_t^\pi=x, \alpha_t=i] \leq C_k(1 + |x|^k) e^{C_k (s-t)}.
\end{equation*}

\item[(ii)] For any $x, y \in \mathbb{R}^n$, by adding and subtracting $\int_{\mathcal{A}} b(s, y, i, a) \pi(a \mid s, x, i) da$, we have
\begin{equation*}
\begin{aligned}
|\widetilde{b}(s, x, i, \pi) - \widetilde{b}(s, y, i, \pi)| \leq & \int_{\mathcal{A}} |b(s, x, i, a) - b(s, y, i, a)| \pi(a \mid s, x, i) da \\
& + \int_{\mathcal{A}} |b(s, y, i, a)| \cdot |\pi(a \mid s, x, i) - \pi(a \mid s, y, i)| da.
\end{aligned}
\end{equation*}
By Assumption \ref{assumption}-(ii) and (iii), and Definition \ref{def:admissible}-(iii), the second term on the right-hand side is bounded by $K(1+|y|) L_\pi \mu(\mathcal{A}) |x-y|$, where $L_\pi$ is the Lipschitz constant of $\pi$ and $\mu(\mathcal{A})$ is the finite Lebesgue measure of the compact set $\mathcal{A}$. Consequently, for any $x, y$ residing in a ball of radius $R$ (i.e., $|x| \vee |y| \leq R$), there exists a constant $C_R > 0$ depending on $R$ such that
$$
|\widetilde{b}(s, x, i, \pi) - \widetilde{b}(s, y, i, \pi)| \leq C_R|x-y|,
$$
which implies that $\widetilde{b}(s, x, i, \pi)$ is locally Lipschitz continuous. Meanwhile, it is straightforward to verify that $\widetilde{b}$ preserves the global linear growth condition, since $|\widetilde{b}(s, x, i, \pi)| \leq \int_{\mathcal{A}} K(1+|x|) \pi(a \mid s, x, i) da = K(1+|x|)$. Similarly, due to the strict ellipticity of $\Sigma$ and the analytic properties of the matrix square root, the relaxed diffusion coefficient $\widetilde{\sigma}(s, x, i, \pi)$ also satisfies both the local Lipschitz condition and the global linear growth condition. According to the standard theory of SDEs, the combination of local Lipschitz continuity and global linear growth of the coefficients is sufficient to prevent finite-time explosion, thereby guaranteeing the existence and uniqueness of the strong solution for the relaxed SDE \eqref{eq:state-Exp}.

We now verify the growth condition for the relaxed process $\tilde{X}_s^\pi$. For $n \geq 2$, taking the supremum over $u \in [t, s]$ for the integral form of \eqref{eq:state-Exp} and raising it to the power of $n$, we obtain
\begin{equation*}
\max_{t \leq u \leq s}|\tilde{X}_u^\pi|^n \leq 3^{n-1}|x|^n + 3^{n-1} \left(\int_t^s \left|\tilde{b}(\ell, \tilde{X}_\ell^\pi, \alpha_\ell; \pi_\ell)\right| d\ell\right)^n + 3^{n-1} \max_{t \leq u \leq s}\left|\int_t^u \tilde{\sigma}(\ell, \tilde{X}_\ell^\pi, \alpha_\ell; \pi_\ell) dW_\ell\right|^n.
\end{equation*}
Taking the conditional expectation, applying Hölder's inequality for the drift, and using the BDG inequality for the maximal functional of the continuous martingale term, we have
\begin{align*}
&\mathbb{E}\left[\max_{t \leq u \leq s}|\tilde{X}_u^\pi|^n \mid \tilde{X}_t^\pi=x, \alpha_t=i\right] \\
\leq & 3^{n-1}|x|^n + 3^{n-1}(s-t)^{n-1} \int_t^s \mathbb{E}\left[|\tilde{b}(\ell, \tilde{X}_\ell^\pi, \alpha_\ell; \pi_\ell)|^n\mid \tilde{X}_t^\pi=x, \alpha_t=i\right] d\ell \\
& + 3^{n-1} K_n (s-t)^{\frac{n}{2}-1} \int_t^s \mathbb{E}\left[|\tilde{\sigma}(\ell, \tilde{X}_\ell^\pi, \alpha_\ell; \pi_\ell)|^n\mid \tilde{X}_t^\pi=x, \alpha_t=i\right] d\ell.
\end{align*}
Combining with Assumption \ref{assumption}-(iii) yields
\begin{align*}
\mathbb{E}\left[\max_{t \leq u \leq s}|\tilde{X}_u^\pi|^n \mid \tilde{X}_t^\pi=x, \alpha_t=i\right] \leq C_1(1 + |x|^n) + C_2 \int_t^s \left( 1 + \mathbb{E}\left[\max_{t \leq v \leq \ell}|\tilde{X}_v^\pi|^n \mid \tilde{X}_t^\pi=x, \alpha_t=i\right] \right) d\ell,
\end{align*}
for some constants $C_1, C_2 > 0$. Grönwall's inequality implies
\begin{equation*}
\mathbb{E}\left[\max_{t \leq u \leq s}|\tilde{X}_u^\pi|^n\mid \tilde{X}_t^\pi=x, \alpha_t=i\right] \leq C_1\left(1 + |x|^n\right) e^{C_2 (s-t)}.
\end{equation*}
Evaluating at $s = T$, and absorbing the finite exponential term $e^{C_2 T}$ into a new constant $C = C(n, i)$, we obtain the desired bound:
\begin{equation*}
\mathbb{E}\left[\max_{t \leq s \leq T}|\tilde{X}_s^\pi|^n \mid \tilde{X}_t^\pi=x, \alpha_t=i\right] \leq C\left(1+|x|^n\right).
\end{equation*}
Finally, we establish the finiteness of the expected regularized payoff \eqref{eq:value-fun(MC)}. By Assumption \ref{assumption}, the reward functions $r$ and $h$ satisfy the polynomial growth condition. Since the action space $\mathcal{A}$ is compact, the continuous dependency of the rewards on $a$ is uniformly bounded, implying that their growth is strictly governed by the spatial state variable $x$. Given that the state process $\tilde{X}_s^\pi$ admits bounded moments of arbitrary order $n \geq 2$, the expectation of the running and terminal rewards is finite. Furthermore, the compactness of $\mathcal{A}$ ensures that the Tsallis entropy rate is uniformly bounded from above. For instance, when $p=1$ (Shannon entropy), the function $z \mapsto -z \ln z$ is bounded by $1/e$ for $z \geq 0$, leading to $\int_{\mathcal{A}} -\pi \ln \pi da \leq \mu(\mathcal{A})/e < \infty$, where $\mu(\mathcal{A})$ is the finite Lebesgue measure of $\mathcal{A}$. A similar uniform upper bound holds for $p>1$. Consequently, the expected entropy regularizer is well-defined and finite, ensuring that the overall expected payoff is finite for any admissible policy.
\end{itemize}
\end{proof}
\begin{theorem}
Under Assumptions \ref{assumption} and Definition \ref{def:admissible}, let the maximum step size of the time grid be $\Delta := \max_k (s_{k+1} - s_k)$. Denote the solution generated by the discrete-time sampling SDE \eqref{eq:dX_s_pi} as $X^{\Delta}$. As the step size $\Delta \to 0$, the sequence $X^{\Delta}$ converges weakly to the relaxed state process $\widetilde{X}^\pi$ in the path space $D([0,T]; \mathbb{R}^n)$ equipped with the Skorokhod topology.
\end{theorem}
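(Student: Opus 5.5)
We prove weak convergence by the martingale-problem method: establish tightness of $\{X^{\Delta}\}$, then show that every limit point, jointly with the regime process $\alpha$, solves the martingale problem associated with the relaxed generator $\widetilde{\mathcal{L}}^\pi$ in \eqref{eq:the-extended-infinitesimal-generator}, and conclude by uniqueness in law. More precisely, we work with the pair $(X^{\Delta},\alpha)$ in $D([0,T];\mathbb{R}^d\times\mathcal{M})$. The law of $\alpha$ does not depend on $\Delta$, since it is exogenous and the product construction \eqref{eq:product-probability-space} leaves its generator $Q$ unchanged. The uniqueness used at the end is the uniqueness in law from the well-posedness lemma, which Definition \ref{def:admissible}(ii) guarantees.

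\emph{Step 1 (tightness).} Lemma \ref{lem:growth-condition}(i) gives moment bounds uniform in the grid. Combined with the linear growth in Assumption \ref{assumption}(iii) and BDG, they yield
\[
\mathbb{E}|X^{\Delta}_u-X^{\Delta}_s|^4\le C|u-s|^2,
\]
uniformly in $\Delta$. Kolmogorov's criterion then gives tightness of $X^{\Delta}$ in $C$, hence in $D$. Since the law of $\alpha$ is fixed, the pair is tight as well. Let $(X,\alpha)$ be a subsequential limit, realized almost surely via Skorokhod representation.

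\emph{Step 2 (identifying the limit).} Fix $f\in\mathscr{D}$. By Itô's formula for regime-switching diffusions, using the compensators $\widetilde N_{ij}$ and the pre-jump regime $\alpha_{s-}$, the process
\[
M^{\Delta,f}_t=f(t,X^{\Delta}_t,\alpha_t)-\int_0^t \mathcal{L}^{a^\pi_s}f(s,X^{\Delta}_s,\alpha_{s})\,ds
\]
is an $\mathbb{F}'$-martingale. We must show that $\mathcal{L}^{a^\pi_s}$ can be replaced by $\widetilde{\mathcal{L}}^\pi$, i.e. that
\[
\mathbb{E}\Big[\Psi\cdot\int_s^u\big(\mathcal{L}^{a^\pi_\ell}f-\widetilde{\mathcal{L}}^{\pi}f\big)(\ell,X^{\Delta}_\ell,\alpha_\ell)\,d\ell\Big]\to0
\]
for bounded continuous $\mathcal{F}_s$-measurable functionals $\Psi$. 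On each cell $[s_k,s_{k+1})$ we freeze $(\ell,X^\Delta_\ell,\alpha_\ell)$ at $(s_k,X^\Delta_{s_k},\alpha_{s_k})$. The freezing error is $o(1)$ by uniform continuity of $b,\sigma,\partial f$ on compacts, the modulus estimate of Step 1, and the fact that $P(\alpha$ jumps in a cell$)=O(\Delta)$. The $Q$-term is action independent, so it cancels exactly. After freezing, the integrand on the cell is
\[
g_k:=\mathcal{L}^{a_{s_k}}f-\int_{\mathcal{A}}\mathcal{L}^a f\,\pi(da\mid s_k,X^\Delta_{s_k},\alpha_{s_k}),
\]
and $\mathbb{E}[g_k\mid\mathcal{F}'_{s_k}]=0$ because $\varsigma_k$ is independent of $\mathcal{F}'_{s_k}$. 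The cell sums therefore form a martingale-difference sequence. Its second moment is $\sum_k(s_{k+1}-s_k)^2\,\mathbb{E}g_k^2\le C\Delta$, since $f$ has compact support and the coefficients are bounded there. This is a law-of-large-numbers averaging, and it gives exactly the convergence required. Passing to the limit, using continuity of $\widetilde b,\tilde c$ in $x$ from the proof of Lemma \ref{lem:growth-condition}(ii) and Definition \ref{def:admissible}(iii), shows that $f(t,X_t,\alpha_t)-\int_0^t\widetilde{\mathcal{L}}^\pi f\,ds$ is a martingale for the limit. Hence $(X,\alpha)$ solves the relaxed martingale problem, and by uniqueness its law is $\mathbb{P}^\pi_{t,x,i}$. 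Since the limit is independent of the subsequence, the whole family converges.

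\emph{Main obstacle.} The hardest step is the averaging in Step 2. Its difficulty is the coupling between regime jumps inside a cell and the fact that the action was sampled under the pre-jump regime $\alpha_{s_k}$. Within a cell where $\alpha$ jumps, the held action is distributed according to $\pi(\cdot\mid\cdot,\alpha_{s_k})$, not according to the current regime. We handle this by bounding the contribution of such cells by $C\,\mathbb{E}[\#\{\text{cells with jumps}\}]\cdot\Delta\le C\Delta$, which is negligible. A second delicate point is that $\tilde\sigma$ is defined only through $\tilde c$, so identification must be done at the level of the generator, not pathwise. This is precisely why the martingale problem, rather than strong convergence, is the right tool.
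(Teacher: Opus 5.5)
Your proposal is correct and follows essentially the same route as the paper's proof. Both use Kolmogorov tightness, split the compensator mismatch into cellwise freezing errors plus a martingale-difference averaging sum with second moment $O(\Delta)$, identify every limit point of $(X^{\Delta},\alpha)$ through the relaxed martingale problem, and conclude by uniqueness in law. One small correction: under the paper's filtration, $\mathcal{F}^{\varsigma}_s=\sigma(\varsigma_k : s_k\le s)$, so $\varsigma_k$ is $\mathcal{F}'_{s_k}$-measurable and your claim $\mathbb{E}[g_k\mid\mathcal{F}'_{s_k}]=0$ fails as written; the zero-mean identity must instead be conditioned on the pre-sampling $\sigma$-algebra $\mathcal{G}_{s_k}=\mathcal{F}_{s_k}\vee\sigma(\varsigma_0,\dots,\varsigma_{k-1})$, exactly as the paper does.
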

\begin{proof}
By virtue of the BDG inequality, the Cauchy-Schwarz inequality, and Assumptions \ref{assumption}-(iii), we have, for any $0 \leq s < t \leq T$:
\begin{align*}
\mathbb{E}\left[ \sup_{s \leq u \leq t} |X_u^{\Delta} - X_s^{\Delta}|^4 \right] &\leq C \mathbb{E} \left[ \left( \int_s^t |b(\ell, X_\ell^{\Delta}, \alpha_\ell, a_\ell^\pi)| d\ell \right)^4 + \left( \int_s^t |\sigma(\ell, X_\ell^{\Delta}, \alpha_\ell, a_\ell^\pi)|^2 d\ell \right)^2 \right] \\
&\leq C' \mathbb{E} \left[ \int_s^t (1 + |X_\ell^{\Delta}|^4) d\ell \right] (t-s)^2.
\end{align*}
Recall from Lemma \ref{lem:growth-condition} that the moments $\mathbb{E}[\sup_{t \in [0,T]} |X_t^{\Delta}|^4]$ are uniformly bounded independent of the grid step size $\Delta$. Substituting this uniform bound into the right-hand side yields
\begin{equation*}
\mathbb{E}\left[ \sup_{s \leq u \leq t} |X_u^{\Delta} - X_s^{\Delta}|^4 \right] \leq \tilde{C} (t-s)^3 \leq \hat{C} (t-s)^2,
\end{equation*}
for some generic constants $\tilde{C}, \hat{C} > 0$. According to the Kolmogorov-Chentsov criterion, this implies that the sequence of probability distributions of $X^{\Delta}$ is tight in $D([0,T]; \mathbb{R}^n)$. By Prohorov's theorem, there exists at least one weakly convergent subsequence; let its limit process be denoted by $X^*$.

For any test function $f \in C_b^2(\mathbb{R}^n \times \mathcal{M})$, let
$\mathcal{G}_{s_k}:=\mathcal{F}_{s_k} \vee \sigma(\varsigma_0, \ldots, \varsigma_{k-1})$
denote the information available immediately before the exploration variable $\varsigma_k$ is sampled at time $s_k$. Since $\varsigma_k$ is independent of $\mathcal{G}_{s_k}$,
$\mathbb{P}(a_{s_k} \in d a \mid \mathcal{G}_{s_k})=\pi(d a \mid s_k, X_{s_k}^{\Delta}, \alpha_{s_k})$. Therefore, by the identity \eqref{eq:the-extended-infinitesimal-generator}, we have the exact conditional relation
$$
\mathbb{E}\left[\mathcal{L}^{a_{s_k}} f\left(s_k, X_{s_k}^{\Delta}, \alpha_{s_k}\right) \mid \mathcal{G}_{s_k}\right]=\tilde{\mathcal{L}}^\pi f\left(s_k, X_{s_k}^{\Delta}, \alpha_{s_k}\right) .
$$
This identity holds at the grid point. Over the interval $(s_k, s_{k+1}]$, the sampled action $a_{s_k}$ is held fixed, whereas the state and regime continue to evolve. Hence the difference between the controlled and relaxed compensators can be written as
$$
R_t^{\Delta}:=\sum_k \int_{s_k}^{s_{k+1} \wedge t}\left[\mathcal{L}^{a_{s_k}} f\left(s, X_s^{\Delta}, \alpha_{s-}\right)-\tilde{\mathcal{L}}^\pi f\left(s, X_s^{\Delta}, \alpha_{s-}\right)\right] d s.
$$
For each interval, insert the corresponding quantities evaluated at the left endpoint to obtain
\begin{align*}
R_t^{\Delta}= & \sum_k \int_{s_k}^{s_{k+1} \wedge t}\left[\mathcal{L}^{a_{s_k}} f\left(s, X_s^{\Delta}, \alpha_{s-}\right)-\mathcal{L}^{a_{s_k}} f\left(s_k, X_{s_k}^{\Delta}, \alpha_{s_k}\right)\right] d s \\
& +\sum_k\left(s_{k+1} \wedge t-s_k\right)\left[\mathcal{L}^{a_{s_k}} f\left(s_k, X_{s_k}^{\Delta}, \alpha_{s_k}\right)-\tilde{\mathcal{L}}^\pi f\left(s_k, X_{s_k}^{\Delta}, \alpha_{s_k}\right)\right] \\
+ & \sum_k \int_{s_k}^{s_{k+1} \wedge t}\left[\tilde{\mathcal{L}}^\pi f\left(s_k, X_{s_k}^{\Delta}, \alpha_{s_k}\right)-\tilde{\mathcal{L}}^\pi f\left(s, X_s^{\Delta}, \alpha_{s-}\right)\right] d s.
\end{align*}
The first and third terms are local freezing errors. Under the Lipschitz assumptions on the coefficients and the policy, together with the standard increment estimate
$$
\mathbb{E}\left[\left|X_s^{\Delta}-X_{s_k}^{\Delta}\right|\right] \leq C\left|s-s_k\right|^{1 / 2}, \quad s \in\left[s_k, s_{k+1}\right],
$$
their expected contribution over one interval is of order $O(\Delta^{3 / 2}) .$
The possible change of regime over one interval contributes only $O(\Delta^2)$ in expectation, since the probability of a regime transition during an interval of length $\Delta$ is $O(\Delta)$. Consequently, after summing over $O(\Delta^{-1})$ intervals, the accumulated freezing error is $O(\Delta^{1 / 2})$.

For the middle term, define
$$
Z_k^{\Delta}:=\mathcal{L}^{a_{s_k}} f\left(s_k, X_{s_k}^{\Delta}, \alpha_{s_k}\right)-\tilde{\mathcal{L}}^\pi f\left(s_k, X_{s_k}^{\Delta}, \alpha_{s_k}\right) .
$$
The conditional averaging identity above yields
$\mathbb{E}\left[Z_k^{\Delta} \mid \mathcal{G}_{s_k}\right]=0 .$
Hence the accumulated term
$\sum_k(s_{k+1}-s_k) Z_k^{\Delta}$
is a discrete martingale sum. Its second moment is of order
$\sum_k(s_{k+1}-s_k)^2=O(\Delta),$
and therefore
$\sum_k(s_{k+1}-s_k) Z_k^{\Delta}\rightarrow 0$
in $L^2$, and hence in probability, as $\Delta \rightarrow 0$. Combining the three estimates gives
$\sup _{0 \leq t \leq T}|R_t^{\Delta}| \rightarrow 0$
in probability.
On the other hand, applying Itô's formula with regime-switching to the finite-grid controlled process gives
\begin{equation*}
M_t^{\Delta}:= f(t, X_t^{\Delta}, \alpha_t)-f(0, X_0^{\Delta}, \alpha_0) -\int_0^t \mathcal{L}^{a_{\delta_{\Delta}(s)}} f(s, X_s^{\Delta}, \alpha_{s-}) d s,
\end{equation*}
where $\delta_{\Delta}(s)=s_k$ for $s \in(s_k, s_{k+1}]$. The process $M^{\Delta}$ is a martingale. Since
$$
\int_0^t \mathcal{L}^{a_{\delta_{\Delta}(s)}} f\left(s, X_s^{\Delta}, \alpha_{s-}\right) d s=\int_0^t \tilde{\mathcal{L}}^\pi f\left(s, X_s^{\Delta}, \alpha_{s-}\right) d s+R_t^{\Delta},
$$
the vanishing of $R^{\Delta}$ implies that every weak limit $(X^*, \alpha)$ of $(X^{\Delta}, \alpha)$ satisfies
\begin{equation*}
M_t^*:= f(t, X_t^*, \alpha_t)-f(0, X_0^*, \alpha_0)-\int_0^t \tilde{\mathcal{L}}^\pi f(s, X_s^*, \alpha_{s-}) d s
\end{equation*}
as a martingale. Therefore, $(X^*, \alpha)$ solves the martingale problem associated with the relaxed generator $\tilde{\mathcal{L}}^\pi$.

Under the Lipschitz and growth conditions established in Lemma \ref{lem:growth-condition}, the relaxed SDE admits a unique strong solution and hence a unique law. Accordingly, the associated martingale problem is well posed; see, e.g., \citet{Karatzas1991}, Chapter 5. Since the family $\{(X^{\Delta}, \alpha)\}_{\Delta>0}$ is tight and every weakly convergent subsequence has a limit solving the same martingale problem, all subsequential limits have the same law as the relaxed exploratory state process $(\widetilde{X}^\pi, \alpha)$. It follows that the entire sequence $(X^{\Delta}, \alpha)$ converges weakly to $(\widetilde{X}^\pi, \alpha)$ as $\Delta$ tends to zero.
Consequently, $X^{\Delta}$ converges weakly to $\widetilde{X}^\pi$ as $\Delta \to 0$.
Thus, in the high-frequency limit, the finite-grid sampled-and-held randomization converges to the relaxed regime-switching diffusion whose local characteristics are obtained by averaging the controlled generator under the policy $\pi$.
\end{proof}

\section{Policy Evaluation under regime-switching}
\label{sec:PE}
This section develops the policy-evaluation framework for stochastic policies under Markov regime-switching. We first derive the coupled system of policy-evaluation equations associated with the regime-dependent value functions. We then establish an equivalent martingale characterization that explicitly incorporates both the Brownian and regime-switching martingale components. Based on this representation, we examine the mean-square temporal-difference error and identify the additional jump contribution arising from the predictable quadratic variation. Finally, we formulate the martingale loss and martingale-orthogonality conditions, which provide the theoretical basis for the critic construction used in the subsequent actor-critic analysis.

\subsection{The martingale characterization of policy evaluation}

We first characterize the relaxed state-regime process through its martingale problem. This process-level characterization provides the probabilistic foundation for the subsequent policy-evaluation results and clarifies when alternative representations generate the same relaxed state-regime law.

\begin{proposition}
    Fix an admissible Markov policy $\pi$ and $(t,x,i)\in[0,T)\times\mathbb R^d\times\mathcal{M}$. Under $\mathbb{P}_{t,x,i}^{\pi}$, let $\mathbb G^{\widetilde{X}^\pi,\alpha}$ denote the augmented natural filtration of the relaxed state-regime process. Then, for every $f\in\mathscr D$,
\begin{equation*}
    M_s^{f,\pi} := f(s,\widetilde{X}_s^\pi,\alpha_s) -f(t,x,i) -\int_t^s \widetilde{\mathcal L}^{\pi} f(u,\widetilde{X}_u^\pi,\alpha_{u-})\,du, \qquad s\in[t,T],
\end{equation*}
is a $\mathbb G^{\widetilde{X}^\pi,\alpha}$-local martingale. Hence, $(\widetilde{X}^\pi,\alpha)$ solves the martingale problem associated with $(\widetilde{\mathcal L}^{\pi},\mathscr D)$.

Conversely, let $(Y,\eta)$ be a process starting from $(Y_t,\eta_t)=(x,i)$, and let $\mathbb G^{Y,\eta}$ be its augmented natural filtration. Suppose that
\begin{equation} \label{eq:Y_decomp}
    Y_s = x + \int_t^s \widetilde{b}(u,Y_u,\eta_{u-},\pi)du + M_s^Y,
\end{equation}
where $M^Y$ is a continuous  $\mathbb G^{Y,\eta}$-local martingale satisfies $d\langle M^Y\rangle_s = \tilde{c}(s,Y_s,\eta_{s-},\pi)ds$, and, for each $k\neq j$, the transition-counting process $N_{kj}^{\eta}(s)$ has predictable compensator $\int_t^s \mathbf{1}_{\{\eta_{u-}=k\}} q_{kj}du$.
Then $(Y,\eta)$ solves the same martingale problem as $(\widetilde{X}^\pi,\alpha)$. If the martingale problem associated with $(\widetilde{\mathcal L}^{\pi},\mathscr D)$ is well posed, then $(Y, \eta) \stackrel{d}{=}(\widetilde{X}^\pi, \alpha)$ on $C([t,T];\mathbb{R}^d) \times D([t,T];\mathcal{M})$.
\end{proposition}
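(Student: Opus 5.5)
Both directions rest on one computation: Itô's formula for regime-switching semimartingales, applied to a process of the form \eqref{eq:Y_decomp} whose regime component has the prescribed switching compensators. I carry it out once for a generic pair $(Y,\eta)$. Afterwards I specialize it to $(\widetilde{X}^\pi,\alpha)$ for the direct part and use it as is for the converse.

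\textbf{Step 1 (Itô decomposition).} Fix $f\in\mathscr D$. Between consecutive jump times of $\eta$, apply the classical Itô formula to $f(\cdot,\cdot,\eta_{u-})$. At each jump time, add the increment $f(u,Y_u,j)-f(u,Y_u,k)$ when $\eta$ jumps from $k$ to $j$. This uses that $Y$ is continuous and that $\eta$ has a.s.\ finitely many jumps on $[t,T]$, as in the proof of Lemma~\ref{lem:growth-condition}. The result is
\begin{align*}
f(s,Y_s,\eta_s)-f(t,x,i)=&\int_t^s\Big[\partial_u f+\widetilde b^{\top}\partial_x f+\tfrac12\operatorname{tr}\big(\tilde c\,\partial_{xx}f\big)\Big](u,Y_u,\eta_{u-})\,du+\int_t^s\partial_x f(u,Y_u,\eta_{u-})^{\top}dM^Y_u\\
&+\sum_{k\neq j}\int_t^s\big[f(u,Y_u,j)-f(u,Y_u,k)\big]\mathbf 1_{\{\eta_{u-}=k\}}\,dN^{\eta}_{kj}(u).
\end{align*}
Here I use $d\langle M^Y\rangle=\tilde c\,du$ for the second-order term.

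\textbf{Step 2 (compensate the jumps).} Write $dN^\eta_{kj}=d\widetilde N^\eta_{kj}+\mathbf 1_{\{\eta_{u-}=k\}}q_{kj}\,du$. Since $q_{kk}=-\sum_{j\neq k}q_{kj}$, the compensator part of the jump sum equals $\int_t^s\sum_j q_{\eta_{u-}j}f(u,Y_u,j)\,du$. Adding this to the $du$ term from Step 1 gives exactly $\int_t^s\widetilde{\mathcal L}^\pi f(u,Y_u,\eta_{u-})\,du$, by \eqref{eq:the-extended-infinitesimal-generator}. What remains is
\[
M^{f}_s=\int_t^s\partial_x f^{\top}dM^Y+\sum_{k\neq j}\int_t^s[f(u,Y_u,j)-f(u,Y_u,k)]\mathbf 1_{\{\eta_{u-}=k\}}\,d\widetilde N^\eta_{kj}(u).
\]
Both integrands are $\mathbb G^{Y,\eta}$-predictable, since they are evaluated at $\eta_{u-}$ and $Y$ is continuous. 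They are also bounded, because $f$ has compact support in $x$ and $\partial_x f$ is continuous; near $u=T$ I localize with stopping times, since $f$ is only $C^{1,2}$ on $[0,T)$. Stochastic integrals of such integrands against a continuous local martingale and against compensated counting martingales are local martingales. Hence $M^f$ is a $\mathbb G^{Y,\eta}$-local martingale, which proves the converse claim that $(Y,\eta)$ solves the martingale problem for $(\widetilde{\mathcal L}^\pi,\mathscr D)$.

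\textbf{Step 3 (the direct statement).} Take $Y=\widetilde X^\pi$ and $\eta=\alpha$. Then $M^Y_s=\int_t^s\widetilde\sigma\,d\widetilde W$, and $d\langle M^Y\rangle=\widetilde\sigma\widetilde\sigma^{\top}du=\tilde c\,du$. Since $\alpha$ has generator $Q$ and is independent of $\widetilde W$, the compensators of $N_{kj}$ are the stated ones. The only subtlety is the filtration. These are martingales with respect to the filtration generated by $(\widetilde W,\alpha)$. The natural filtration $\mathbb G^{\widetilde X^\pi,\alpha}$ is smaller, and $M^{f,\pi}$ is adapted to it. I pass to the smaller filtration by taking the localizing sequence to be hitting times of $\widetilde X^\pi$ leaving balls intersected with $T-1/n$, which are $\mathbb G^{\widetilde X^\pi,\alpha}$-stopping times. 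Then I use the tower property: a bounded martingale adapted to a smaller filtration remains a martingale in it. Boundedness of the stopped process follows from compact support and Lemma~\ref{lem:growth-condition}(ii).

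\textbf{Step 4 (uniqueness in law).} If the martingale problem is well posed, both $(Y,\eta)$ and $(\widetilde X^\pi,\alpha)$ solve it from $(x,i)$. Their laws on $C([t,T];\mathbb R^d)\times D([t,T];\mathcal M)$ therefore coincide. Path continuity of $Y$ holds because $M^Y$ is continuous.

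I expect the main obstacle to be the filtration and localization issues in Step 3 and at the terminal time. The $C^{1,2}$ regularity holds only on $[0,T)$, so I would prove the martingale property on $[t,T-\epsilon]$ and pass to $s=T$ by continuity of $f$ and dominated convergence.
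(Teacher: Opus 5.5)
Your proposal is correct and follows the paper's own argument: one generalized It\^o computation for a generic pair with drift $\tilde b$, bracket $\tilde c\,du$ and switching compensators $\mathbf 1_{\{\eta_{u-}=k\}}q_{kj}\,du$, after which compensating the jump sum produces the $\sum_j q_{\eta_{u-}j}f(u,\cdot,j)$ term so that only stochastic integrals against $M^Y$ and $\widetilde N^\eta_{kj}$ remain, followed by equality in law from well-posedness. Your extra steps fill in points the paper leaves implicit: piecing It\^o's formula between the finitely many jumps, passing from the $(\widetilde W,\alpha)$-filtration to $\mathbb G^{\widetilde X^\pi,\alpha}$ via bounded stopped martingales and the tower property, and localizing at $T-1/n$. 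The one caveat is that your final passage to $s=T$ by dominated convergence needs $\widetilde{\mathcal L}^\pi f$ to be integrable up to $T$, which membership in $\mathscr D$ (only $C^{1,2}$ on $[0,T)$) does not guarantee; the paper's proof silently assumes the same.
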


\begin{proof}
The two assertions follow from the same characteristic calculation. Let $(Z, \zeta)$ denote either $(\widetilde{X}^\pi, \alpha)$ or $(Y, \eta)$, and write its continuous semimartingale part as
$$
d Z_s=\tilde{b}(s, Z_s, \zeta_{s-}, \pi) d s+d M_s^Z, \quad d\langle M^Z\rangle_s=\tilde{c}(s, Z_s, \zeta_{s-}, \pi) d s .
$$
For $k \neq j$, let
$\widetilde{N}_{k j}^\zeta(s)=N_{k j}^\zeta(s)-\int_t^s \mathbf{1}_{\left\{\zeta_{u-}=k\right\}} q_{k j} d u$
be the corresponding compensated transition-counting local martingale. Applying the generalized Itô formula to $f\left(s, Z_s, \zeta_s\right), f \in \mathscr{D}$, yields
\begin{align*}
f\left(s, Z_s, \zeta_s\right)-f(t, x, i)= & \int_t^s \tilde{\mathcal{L}}^\pi f\left(u, Z_u, \zeta_{u-}\right) d u +\int_t^s\left(\frac{\partial f}{\partial x}\left(u, Z_u, \zeta_{u-}\right)\right)^{\top} d M_u^Z \\
& +\sum_{k=1}^L \sum_{j \neq k} \int_t^s \mathbf{1}_{\left\{\zeta_{u-}=k\right\}}\left[f\left(u, Z_u, j\right)-f\left(u, Z_u, k\right)\right] d \widetilde{N}_{k j}^\zeta(u) .
\end{align*}
Hence,
\begin{align*}
M_s^{f, Z, \zeta}= \int_t^s\left(\frac{\partial f}{\partial x}(u, Z_u, \zeta_{u-})\right)^{\top} d M_u^Z +\sum_{k=1}^L \sum_{j \neq k} \int_t^s \mathbf{1}_{\{\zeta_{u-}=k\}}[f(u, Z_u, j)-f(u, Z_u, k)] d \widetilde{N}_{k j}^\zeta(u) .
\end{align*}
is a local martingale. Therefore both $( \widetilde{X}^\pi, \alpha)$ and $(Y, \eta)$ solve the martingale problem associated with $(\tilde{\mathcal{L}}^\pi, \mathscr{D})$. Well-posedness then yields equality in law.
\end{proof}

This transfer principle also yields an immediate equivalence at the level of expected rewards. Suppose that an action process $a=\{a_s\}_{s \in[t, T]}$ associated with $(Y, \eta)$ satisfies
$$
R_s^a:=r(s, Y_s, \eta_s, a_s)+\gamma p(s, Y_s, \eta_s, a_s, \pi(\cdot \mid s, Y_s, \eta_s)),
$$
and
$$
\mathbb{E}[R_s^a \mid \mathcal{F}_s^{Y, \eta}]=\mathcal{R}(s, Y_s, \eta_s, \pi)
$$
for $d s \otimes d \mathbb{P}$-almost every $(s, \omega)$. Then, by the tower property and the equality in law,
\begin{equation*}
\mathbb{E}\left[\int_t^T e^{-\beta(s-t)} R_s^a d s+e^{-\beta(T-t)} h(Y_T)\right]
=  \mathbb{E}\left[\int_t^T e^{-\beta(s-t)} \mathcal{R}(s, \widetilde{X}_s^\pi, \alpha_s, \pi) d s+e^{-\beta(T-t)} h(\widetilde{X}_T^\pi)\right] .
\end{equation*}
Thus, representations with the same state-regime law and the same conditional averaged reward yield the same expected discounted return. This exact transfer principle is distinct from the finite-grid sampled-and-held formulation, whose connection with the relaxed process is provided by the high-frequency weak-convergence result established previously.

We now turn from the state process to the policy value.
\begin{theorem}
Fix an admissible Markov policy $\pi$.
\begin{itemize}
    \item[(i)] Suppose that the policy-evaluation equation
\begin{equation}\label{eq:policy-evaluation}
\tilde{\mathcal{L}}^\pi v(t, x, i)+\mathcal{R}(t, x, i, \pi)-\beta v(t, x, i)=0, \quad(t, x, i) \in[0, T) \times \mathbb{R}^d \times \mathcal{M},
\end{equation}
with terminal condition $v(T, x, i)=h(x)$,
admits a unique viscosity solution $v \in C([0, T] \times \mathbb{R}^d \times \mathcal{M})$ satisfying a polynomial growth condition. Then
$v(t, x, i)=J(t, x, i ; \pi)$.
\item[(ii)] Let $v:[0, T] \times \mathbb{R}^d \times \mathcal{M} \rightarrow \mathbb{R}$ be continuous with polynomial growth, satisfy
$v(T, x, i)=h(x)$,
and assume that, for every initial condition $(t, x, i)$,
$$
\mathbb{E}_{t, x, i}^\pi\Big[\sup _{s \in[t, T]}|v(s, \widetilde{X}_s^\pi, \alpha_s)|+\int_t^T|\mathcal{R}(s, \widetilde{X}_s^\pi, \alpha_s, \pi)| d s\Big]<\infty.
$$
Define
$$
\mathcal{M}_s^{t, \pi, v}:=e^{-\beta(s-t)} v(s, \widetilde{X}_s^\pi, \alpha_s)+\int_t^s e^{-\beta(u-t)} \mathcal{R}(u, \widetilde{X}_u^\pi, \alpha_u, \pi) d u, \quad s \in[t, T].
$$
Then
$v(t, x, i)=J(t, x, i ; \pi)$
for every $(t, x, i)$ if and only if $\mathcal{M}^{t, \pi, v}$ is a martingale with respect to the augmented natural filtration of $(\widetilde{X}^\pi, \alpha)$ under $\mathbb{P}_{t, x, i}^\pi$.
\item[(iii)] If, in addition, $J(\cdot, \cdot, i ; \pi) \in C\left([0, T] \times \mathbb{R}^d\right) \cap C^{1,2}\left([0, T) \times \mathbb{R}^d\right), i \in \mathcal{M},$
then the process
$$
\mathcal{M}_s^{t, \pi}:=e^{-\beta(s-t)} J(s, \widetilde{X}_s^\pi, \alpha_s ; \pi)+\int_t^s e^{-\beta(u-t)} \mathcal{R}(u, \widetilde{X}_u^\pi, \alpha_u, \pi) d u
$$
admits
\begin{align*}
d \mathcal{M}_s^{t, \pi}= & e^{-\beta(s-t)} J_x(s, \widetilde{X}_s^\pi, \alpha_{s-} ; \pi)^{\top} \widetilde{\sigma}(s, \widetilde{X}_s^\pi, \alpha_{s-}, \pi) d \widetilde{W}_s \\
& +e^{-\beta(s-t)} \sum_{k=1}^L \sum_{j \neq k} \mathbf{1}_{\left\{\alpha_{s-}=k\right\}}\left[J(s, \widetilde{X}_s^\pi, j ; \pi)-J(s, \widetilde{X}_s^\pi, k ; \pi)\right] d \widetilde{N}_{k j}(s).
\end{align*}
Consequently, its predictable quadratic variation satisfies
\begin{align*}
d\left\langle\mathcal{M}^{t, \pi}\right\rangle_s=e^{-2 \beta(s-t)} & \bigg[J_x(s, \widetilde{X}_s^\pi, \alpha_{s-}; \pi)^{\top} \tilde{c}(s, \widetilde{X}_s^\pi, \alpha_{s-}, \pi) J_x(s, \widetilde{X}_s^\pi, \alpha_{s-} ; \pi) \\
& +\sum_{k=1}^L \mathbf{1}_{\{\alpha_{s-}=k\}} \sum_{j \neq k} q_{k j}(J(s, \widetilde{X}_s^\pi, j ; \pi)-J(s, \widetilde{X}_s^\pi, k ; \pi))^2\bigg] d s .
\end{align*}
\end{itemize}

\end{theorem}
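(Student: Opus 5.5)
The plan is to prove the three parts in the order (ii), (iii), (i), since the martingale characterization is the backbone and the other two parts follow from it.

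\emph{Part (ii).} I will use the Markov property of $(\widetilde{X}^\pi,\alpha)$ under $\mathbb{P}^\pi_{t,x,i}$, which holds because the martingale problem for $(\widetilde{\mathcal L}^\pi,\mathscr D)$ is well posed by the preceding Proposition and the uniqueness in Lemma \ref{lem:growth-condition}.

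For the direction ``$v=J\Rightarrow$ martingale'', I write
\[
\mathcal{M}_s^{t,\pi,v}=\mathbb{E}^\pi_{t,x,i}\Big[\int_t^T e^{-\beta(u-t)}\mathcal{R}\,du+e^{-\beta(T-t)}h(\widetilde{X}_T^\pi)\,\Big|\,\mathcal G_s\Big].
\]
To justify this, I split the integral at $s$ and apply the Markov property to the tail. This identifies the conditional expectation of the tail with $e^{-\beta(s-t)}J(s,\widetilde{X}_s^\pi,\alpha_s;\pi)$. The process is then a closed (Doob) martingale, and the integrability hypothesis guarantees the conditional expectation is well defined.

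For the converse, I take expectations at $s=t$ and $s=T$ and use the terminal condition $v(T,\cdot,i)=h$. This gives
\[
v(t,x,i)=\mathbb{E}[\mathcal{M}_T^{t,\pi,v}]=J(t,x,i;\pi).
\]
Since this holds for every initial triple $(t,x,i)$, we get $v\equiv J$.

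\emph{Part (iii).} Here I apply the regime-switching Itô formula, exactly as in the proof of the preceding Proposition, to $e^{-\beta(s-t)}J(s,\widetilde{X}_s^\pi,\alpha_s;\pi)$. This is legitimate because $J$ is $C^{1,2}$. The calculation yields a drift equal to
\[
e^{-\beta(s-t)}\big(\widetilde{\mathcal L}^\pi J-\beta J\big),
\]
together with the Brownian stochastic integral and the compensated jump integrals $d\widetilde N_{kj}$. Adding the reward integral, the drift becomes $e^{-\beta(s-t)}(\widetilde{\mathcal L}^\pi J+\mathcal R-\beta J)$.

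By (ii), $\mathcal{M}^{t,\pi}$ is a martingale, so this finite-variation continuous drift must vanish, $ds\otimes d\mathbb P$-a.e. The reason is that a continuous predictable finite-variation martingale is constant. This leaves exactly the displayed decomposition of $d\mathcal{M}_s^{t,\pi}$.

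For the predictable quadratic variation:
\begin{itemize}
\item the Brownian part contributes $J_x^\top\tilde c J_x\,ds$;
\item the cross terms with the jump martingales vanish, since $\widetilde W$ is continuous and the $\widetilde N_{kj}$ are purely discontinuous;
\item distinct $\widetilde N_{kj}$ have no common jumps, so they are mutually orthogonal;
\item each satisfies $d\langle\widetilde N_{kj}\rangle=\mathbf 1_{\{\alpha_{s-}=k\}}q_{kj}ds$.
\end{itemize}
Summing these contributions gives the stated formula.

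\emph{Part (i).} The plan is to show that $J$ is itself a viscosity solution of \eqref{eq:policy-evaluation} with polynomial growth, and then invoke the assumed uniqueness. Polynomial growth follows from Lemma \ref{lem:growth-condition}(ii) and Definition \ref{def:admissible}(iv). Continuity of $J$ follows from continuous dependence of the relaxed SDE on initial data; this uses Definition \ref{def:admissible}(iii) and the local Lipschitz property shown in Lemma \ref{lem:growth-condition}.

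For the viscosity property I will use the martingale property from (ii) in the form of a dynamic programming identity. Let $\varphi\in\mathscr D$ (after localization) touch $J(\cdot,\cdot,i)$ from above at $(t_0,x_0)$. Apply the preceding Proposition to $\varphi$ up to a small stopping time $\tau_h$, chosen as the minimum of $t_0+h$, the exit time from a ball, and the first regime jump. Comparing with the identity
\[
J(t_0,x_0,i)=\mathbb{E}\Big[e^{-\beta(\tau_h-t_0)}J(\tau_h,\cdot,\alpha_{\tau_h})+\int_{t_0}^{\tau_h}\cdots\Big],
\]
dividing by $h$ and letting $h\to 0$ yields the subsolution inequality; the supersolution inequality is symmetric. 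The coupling term is handled because the test function is only required to touch in the $i$-th component, while the other components enter through $q_{ij}J(t_0,x_0,j)$ at zeroth order. To deal with this I will use the standard weakly coupled definition: test $\varphi$ in component $i$, with $J(\cdot,\cdot,j)$ for $j\neq i$ plugged in directly.

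\emph{Main obstacle.} I expect this viscosity step to be the hardest, specifically controlling the jump contribution before $\tau_h$. The probability of a jump by $\tau_h$ is $O(h)$. That jump contributes $q_{ij}(J(\cdot,j)-J(\cdot,i))$ at leading order, and this must be matched to the coupling term in the generator using the continuity of $J$. The rest of the argument is routine.
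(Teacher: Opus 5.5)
Your proposal is correct, and for parts (i) and (ii) it follows the paper's argument. For (ii) both use the Markov property and the Doob martingale $\mathbb{E}[G\mid\mathcal G_s]$, with the converse from equating expectations at $s=t$ and $s=T$. For (i) both show that $J$ is a weakly coupled viscosity solution via a dynamic-programming identity on a small stopping time, and then invoke uniqueness.

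The genuine difference is in (iii). The paper cancels the finite-variation part by appealing to the policy-evaluation equation for $J$ obtained in (i). Since (i) only gives a viscosity solution, this tacitly uses that a $C^{1,2}$ viscosity solution is classical. You instead read the cancellation off the martingale property from (ii). The drift integral equals a martingale minus two local martingales, so it is a continuous finite-variation local martingale started at zero, hence identically zero. This makes (iii) independent of (i) and of any viscosity theory. It gives the equation only $ds\otimes d\mathbb{P}$-a.e. along $(\widetilde X^\pi,\alpha_{s-})$, but that is exactly what the decomposition and the bracket computation require.

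In (i), stopping $\tau_h$ at the first regime jump is cleaner than the paper's heuristic based on transition intensities. Before $\tau_h$ only the $i$-th component of the test function is differentiated, and the switching term becomes an exact compensator identity:
\[
\mathbb{E}\Big[\int_{t_0}^{\tau_h} e^{-\beta(s-t_0)}g_j(s,\widetilde X_s^\pi)\,dN_{ij}(s)\Big]=q_{ij}\,\mathbb{E}\Big[\int_{t_0}^{\tau_h} e^{-\beta(s-t_0)}g_j(s,\widetilde X_s^\pi)\,ds\Big],\qquad g_j:=J(\cdot,\cdot,j)-\varphi(\cdot,\cdot,i).
\]
Continuity of $J(\cdot,\cdot,j)$ is then needed only when letting $h\to 0$, which removes what you flagged as the main obstacle.

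Finally, you explicitly verify that $J$ is continuous, which is needed for $J$ to belong to the uniqueness class. The paper leaves this point implicit.
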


\begin{proof}
We first prove part (i). By the DPP, for any stopping time $\tau \in [t, T]$,
\begin{equation}\label{eq:DDP}
J(t, x, i ; \pi)=\mathbb{E}_{t,x,i}^\pi\bigg[\int_t^\tau e^{-\beta(s-t)} \mathcal{R}(s, \widetilde{X}_s^\pi, \alpha_s,\pi) d s +e^{-\beta(\tau-t)} J(\tau, \widetilde{X}_\tau^\pi, \alpha_\tau ; \pi) \bigg].
\end{equation}
We begin by establishing that $J(t, x, i ; \pi)$ is a viscosity subsolution. Fix $i\in\mathcal M$, and let $\phi(\cdot, \cdot, i) \in C([0, T] \times \mathbb{R}^d) \cap C^{1,2}([0, T) \times \mathbb{R}^d)$ be a test function such that $J(\cdot, \cdot, i;\pi) - \phi(\cdot, \cdot, i)$ attains a local maximum at $(t_0, x_0)$ with $J(t_0, x_0, i ; \pi) = \phi(t_0, x_0, i)$. Hence,
$J(s, x, i ; \pi) \leq \phi(s, x, i)$
for $(s, x)$ sufficiently close to $(t_0, x_0)$.
Let $\tau_{\varepsilon} = (t_0+\varepsilon) \wedge \inf\{s>t_0 : \widetilde{X}_s^\pi \notin B_\delta(x_0)\}$ where $B_\delta(x_0)$ is a small neighborhood of $x_0$. Applying the DPP over $[t_0, \tau_{\varepsilon}]$, we obtain
\begin{align*}
0&=\mathbb{E}_{t_0, x_0, i}^\pi \bigg[\int_{t_0}^{\tau_{\varepsilon}} e^{-\beta(s-t_0)} \mathcal{R}(s, \widetilde{X}_s^\pi, \alpha_s, \pi) d s +e^{-\beta(\tau_{\varepsilon}-t_0)} J(\tau_{\varepsilon}, \widetilde{X}_{\tau_{\varepsilon}}^\pi, \alpha_{\tau_{\varepsilon}} ; \pi)-\phi(t_0, x_0, i)\bigg]\notag\\
&\leq \mathbb{E}_{t_0, x_0, i}^\pi\bigg[ \int_{t_0}^{\tau_{\varepsilon}} e^{-\beta(s-t_0)} \mathcal{R}(s, \widetilde{X}_s^\pi, \alpha_s, \pi) d s +e^{-\beta(\tau_{\varepsilon}-t_0)} \phi(\tau_{\varepsilon}, \widetilde{X}_{\tau_{\varepsilon}}^\pi, i) \mathbf{1}_{\{\alpha_{\tau_{\varepsilon}}=i\}} \\
&\quad +e^{-\beta(\tau_{\varepsilon}-t_0)} \sum_{j \neq i} J(\tau_{\varepsilon}, \widetilde{X}_{\tau_{\varepsilon}}^\pi, j ; \pi) \mathbf{1}_{\{\alpha_{\tau_{\varepsilon}}=j\}} -\phi(t_0, x_0, i)\bigg] .
\end{align*}
Combining the generalized It\^o expansion of $e^{-\beta(s-t_0)} \phi(s, \widetilde{X}_s^\pi, \alpha_s)$ over $[t_0, \tau_{\varepsilon}]$ with the transition intensities
$\mathbb{P}(\alpha_{s+d s}=j \mid \alpha_s=i)=q_{i j} d s+o(d s)$, for $j \neq i$,
yields
\begin{align*}
0 \leq &\mathbb{E}_{t_0, x_0, i}^\pi \int_{t_0}^{\tau_{\varepsilon}} e^{-\beta(s-t_0)} \bigg[ \frac{\partial \phi}{\partial s}(s, \widetilde{X}_s^\pi, i)+\tilde{b}(s, \widetilde{X}_s^\pi, i, \pi)^{\top} \frac{\partial \phi}{\partial x} (s, \widetilde{X}_s^\pi, i) +\frac{1}{2} \operatorname{tr}\Big[\tilde{c}(s, \widetilde{X}_s^\pi, i, \pi) \frac{\partial^2 \phi}{\partial x^2} (s, \widetilde{X}_s^\pi, i)\Big] \notag\\
&\quad +\sum_{j \neq i} q_{i j}\Big(J(s, \widetilde{X}_s^\pi, j ; \pi)-\phi(s, \widetilde{X}_s^\pi, i)\Big) + \mathcal{R}(s, \widetilde{X}_s^\pi, i, \pi) - \beta \phi(s, \widetilde{X}_s^\pi, i) \bigg] ds+o(\varepsilon).
\end{align*}
Dividing by $\varepsilon$ and letting $\varepsilon \downarrow 0$, we obtain
\begin{align*}
0 \leq & \frac{\partial \phi}{\partial t}(t_0, x_0, i)+\tilde{b}(t_0, x_0, i, \pi)^{\top} \frac{\partial \phi}{\partial x} (t_0, x_0, i) +\frac{1}{2} \operatorname{tr}\Big[\tilde{c}(t_0, x_0, i, \pi) \frac{\partial^2 \phi}{\partial x^2} (t_0, x_0, i)\Big] \notag\\
&\quad +\sum_{j \neq i} q_{i j}[J(t_0, x_0, j ; \pi)-\phi(t_0, x_0, i)] + \mathcal{R}(t_0, x_0, i, \pi) - \beta \phi(t_0, x_0, i) .
\end{align*}
Therefore, in the $i$-th equation, the regime-switching contribution at the contact point is
$$
\sum_{j \neq i} q_{i j}[J(t_0, x_0, j ; \pi)-\phi(t_0, x_0, i)]=\sum_{j \neq i} q_{i j}[J(t_0, x_0, j ; \pi)-J(t_0, x_0, i ; \pi)],
$$
and hence
$$
\widetilde{\mathcal{L}}^\pi \phi(t_0, x_0, i) + \mathcal{R}(t_0, x_0, i,\pi) - \beta \phi(t_0, x_0, i) \geq 0,
$$
where, in the $i$-th viscosity equation, the switching part is understood as
$
\sum_{j \neq i} q_{i j}[J(t_0, x_0, j ; \pi)-\phi(t_0, x_0, i)]$. This proves $J(\cdot, \cdot, i ; \pi)$ is the viscosity sub-solution.
A symmetric argument with reversed inequalities gives the corresponding super-solution inequality. Hence $J(t, x, i ; \pi)$ is a viscosity solution of
\begin{equation}\label{eq:PE-equ-of-J}
\tilde{\mathcal{L}}^\pi J(t, x, i ; \pi)+\mathcal{R}(t, x, i, \pi)-\beta J(t, x, i ; \pi)=0,
\end{equation}
with terminal condition $J(T, x, i ; \pi)=h(x)$.
Finally, the standing moment assumptions imply the polynomial growth estimate $|J(t, x, i ; \pi)| \leq C(1+|x|^\mu)$. Since the viscosity solution is assumed to be unique within this growth class, we conclude that $v(t, x, i)=J(t, x, i ; \pi)$.

For part (ii), suppose first that
$v(t, x, i)=J(t, x, i ; \pi)$.
By the Markov property and the definition of the value function,
\begin{equation*}
e^{-\beta(s-t)} J(s, \widetilde{X}_s^\pi, \alpha_s ; \pi)=\mathbb{E}_{t, x, i}^\pi \Big[\int_s^T e^{-\beta(u-t)} \mathcal{R}(u, \widetilde{X}_u^\pi, \alpha_u, \pi) d u +e^{-\beta(T-t)} h(\widetilde{X}_T^\pi) \mid \mathcal{G}_s\Big] .
\end{equation*}
Adding the accumulated reward $\int_t^s e^{-\beta(u-t)} \mathcal{R}(u, \widetilde{X}_u^\pi, \alpha_u, \pi) d u$ over $[t, s]$ to both sides gives
\begin{equation*}
\mathcal{M}_s^{t, \pi, J}=\mathbb{E}_{t, x, i}^\pi\bigg[\int_t^T e^{-\beta(u-t)} \mathcal{R}(u, \widetilde{X}_u^\pi, \alpha_u, \pi) d u +e^{-\beta(T-t)} h(\widetilde{X}_T^\pi) \mid \mathcal{G}_s\bigg].
\end{equation*}
Thus, $\mathcal{M}^{t, \pi, J}$ is a $\mathbb{G}$-martingale.
Conversely, suppose that $\mathcal{M}^{t, \pi, v}$ is a $\mathbb{G}$-martingale. Since its expectation is constant,
$v(t, x, i)=\mathbb{E}_{t, x, i}^\pi[\mathcal{M}_T^{t, \pi, v}]$.

Using $v(T, x, i)=h(x)$, we obtain
\begin{equation*}
v(t, x, i)=\mathbb{E}_{t, x, i}^\pi \bigg[\int_t e^{-\beta(s-t)} \mathcal{R}(s, \widetilde{X}_s^\pi, \alpha_s, \pi) d s  +e^{-\beta(T-t)}h(\widetilde{X}_T^\pi)\bigg]=J(t, x, i ; \pi).
\end{equation*}
This proves the martingale characterization.

For part (iii), apply the generalized Itô formula to $J(s, \widetilde{X}_s^\pi, \alpha_s ; \pi)$ yields
\begin{align*}
d J(s, \widetilde{X}_s^\pi, \alpha_s ; \pi)= & \tilde{\mathcal{L}}^\pi J(s, \widetilde{X}_s^\pi, \alpha_{s-} ; \pi) d s +J_x(s, \widetilde{X}_s^\pi, \alpha_{s-} ; \pi)^{\top} \widetilde{\sigma}(s, \widetilde{X}_s^\pi, \alpha_{s-}, \pi) d \widetilde{W}_s \\
& +\sum_{k=1}^L \sum_{j \neq k} \mathbf{1}_{\{\alpha_{s-}=k\}}[J(s, \widetilde{X}_s^\pi, j ; \pi)-J(s, \widetilde{X}_s^\pi, k ; \pi)] d \widetilde{N}_{k j}(s).
\end{align*}
Combining with \eqref{eq:PE-equ-of-J}
and $\alpha_s=\alpha_{s-}, d s \otimes d \mathbb{P}$-almost everywhere, the finite-variation terms cancel after applying the product rule to $e^{-\beta(s-t)} J(s, \widetilde{X}_s^\pi, \alpha_s ; \pi)$. Hence,
\begin{align*}
d \mathcal{M}_s^{t, \pi}= & e^{-\beta(s-t)} J_x(s, \widetilde{X}_s^\pi, \alpha_{s-} ; \pi)^{\top} \widetilde{\sigma}(s, \widetilde{X}_s^\pi, \alpha_{s-}, \pi) d \widetilde{W}_s \\
& +e^{-\beta(s-t)} \sum_{k=1}^L \sum_{j \neq k} \mathbf{1}_{\{\alpha_{s-}=k\}}[J(s, \widetilde{X}_s^\pi, j ; \pi)-J(s, \widetilde{X}_s^\pi, k ; \pi)] d \widetilde{N}_{k j}(s).
\end{align*}
The first term is a continuous martingale, whereas the second is a purely discontinuous martingale. Since $\widetilde{\sigma}(t, x, i, \pi) \widetilde{\sigma}(t, x, i, \pi)^{\top}=\tilde{c}(t, x, i, \pi)$,
the continuous contribution satisfies
$$
d\left\langle\mathcal{M}^{t, \pi, c}\right\rangle_s=e^{-2 \beta(s-t)} J_x(s, \widetilde{X}_s^\pi, \alpha_{s-} ; \pi)^{\top} \tilde{c}(s, \widetilde{X}_s^\pi, \alpha_{s-}, \pi) J_x(s, \widetilde{X}_s^\pi, \alpha_{s-} ; \pi) d s .
$$
For the jump component,
$d\langle\widetilde{N}_{k j}\rangle_s=\mathbf{1}_{\left\{\alpha_{s-}=k\right\}} q_{k j} d s$,
and distinct transition martingales are orthogonal. Therefore,
\begin{equation*}
d\left\langle\mathcal{M}^{t, \pi, d}\right\rangle_s=e^{-2 \beta(s-t)} \sum_{k=1}^L \mathbf{1}_{\{\alpha_{s-}=k\}} \sum_{j \neq k} q_{k j}[J(s, \widetilde{X}_s^\pi, j ; \pi)-J(s, \widetilde{X}_s^\pi, k ; \pi)]^2 d s .
\end{equation*}
The orthogonality of the continuous and purely discontinuous martingale components implies
$\langle\mathcal{M}^{t, \pi, c}, \mathcal{M}^{t, \pi, d}\rangle_s=0$,
and hence yields
\begin{align*}
d\left\langle\mathcal{M}^{t, \pi}\right\rangle_s=e^{-2 \beta(s-t)} & \big[J_x(s, \widetilde{X}_s^\pi, \alpha_{s-} ; \pi)^{\top} \tilde{c}(s, \widetilde{X}_s^\pi, \alpha_{s-}, \pi) J_x(s, \widetilde{X}_s^\pi, \alpha_{s-} ; \pi) \\
& +\sum_{k=1}^L \mathbf{1}_{\{\alpha_{s-}=k\}} \sum_{j \neq k} q_{k j}[J(s, \widetilde{X}_s^\pi, j ; \pi)-J(s, \widetilde{X}_s^\pi, k ; \pi)]^2\big] d s .
\end{align*}
Under the standing integrability assumptions, the two stochastic integrals are square integrable, and hence $\mathcal{M}^{t, \pi}$ is a square-integrable martingale. This establishes the PDE characterization, the martingale characterization, and the predictable quadratic-variation decomposition.
\end{proof}
The regime-switching contribution to the predictable quadratic variation is given by
$$
\sum_{k=1}^L \mathbf{1}_{\{\alpha_{s-}=k\}} \sum_{j \neq k} q_{k j}[J(s, \widetilde{X}_s^\pi, j ; \pi)-J(s, \widetilde{X}_s^\pi, k ; \pi)]^2.
$$
It disappears in the pure-diffusion case and is therefore the additional regime-switching contribution that must be retained in quadratic-variation-based temporal-difference criteria. This term provides the starting point for the MSTDE analysis below.

\subsection{Martingale loss}

For a fixed policy $\pi$, let $\mathbb{G}=\{\mathcal{G}_t\}_{t \in[0, T]}$ be the augmented natural filtration generated by $(\widetilde X^\pi,\alpha)$, and define the total discounted reward by
\begin{equation}\label{eq:definition-of-G-pi}
G^\pi:=\int_0^T e^{-\beta s} \mathcal{R}(s, \widetilde{X}_s^\pi, \alpha_s,\pi) d s+e^{-\beta T} h(\widetilde{X}_T^\pi).
\end{equation}
It follows from the Assumption \ref{assumption} and the Lemma \ref{lem:growth-condition} that $G^\pi \in L^2(\Omega, \mathcal{G}_T, \mathbb{P})$. For any $t \in[0, T]$, the Markov property of $(\widetilde{X}^\pi, \alpha)$ and the definition of the value function give
\begin{equation}\label{eq:value-conditional-expectation}
e^{-\beta t} J(t, \widetilde{X}_t^\pi, \alpha_t ; \pi) =\mathbb{E}\left[\int_t^T e^{-\beta s} \mathcal{R}(s, \widetilde{X}_s^\pi, \alpha_s,\pi) d s+e^{-\beta T} h(\widetilde{X}_T^\pi) \mid \mathcal{G}_t\right].
\end{equation}
Since the accumulated reward up to time $t$,
$\int_0^t e^{-\beta s} \mathcal{R}(s, \widetilde{X}_s^\pi, \alpha_s, \pi) d s$,
is $\mathcal{G}_t$-measurable, \eqref{eq:value-conditional-expectation} can be rewritten as
\begin{equation*}
e^{-\beta t} J(t, \widetilde{X}_t^\pi, \alpha_t ; \pi)+\int_0^t e^{-\beta s} \mathcal{R}(s, \widetilde{X}_s^\pi, \alpha_s, \pi) d s =\mathbb{E}[G^\pi \mid \mathcal{G}_t] .
\end{equation*}
Therefore, defining
\begin{equation}\label{eq:definition-of-M_t-pi}
M_t^\pi := e^{-\beta t} J(t, \widetilde{X}_t^\pi, \alpha_t ; \pi)+\int_0^t e^{-\beta s} \mathcal{R}(s, \widetilde{X}_s^\pi, \alpha_s,\pi) d s,
\end{equation}
we have
\begin{equation}\label{eq:martingale_M}
M_t^\pi=\mathbb{E}[G^\pi \mid \mathcal{G}_t].
\end{equation}
Because $G^\pi \in L^2$, the conditional-expectation process $\{M_t^\pi\}_{t \in[0, T]}$ is a square-integrable $\mathbb{G}$ - martingale. Moreover, at the terminal time,
$M_T^\pi=G^\pi$.

The preceding representation is obtained directly from conditional expectations and therefore does not require classical differentiability of $J(t, x, i ; \pi)$ with respect to the time-state variables. To approximate the value function, let $\Theta \subset \mathbb{R}^{d_\theta}$
be the critic parameter set, and consider a family of measurable functions
$\{J^\theta(t, x, i):[0, T] \times \mathbb{R}^d \times \mathcal{M} \rightarrow \mathbb{R}: \theta \in \Theta\}.$
For each $\theta \in \Theta$, assume that
$$
J^\theta(T, x, i)=h(x), \quad \mathbb{E} \int_0^T e^{-2 \beta t}|J^\theta(t, \widetilde{X}_t^\pi, \alpha_t)|^2 d t<\infty.
$$
Thus, $J^\theta(t, x, i)$ represents an admissible critic approximation of $J(t, x, i ; \pi)$. When gradient-based critic updates are considered later, we additionally require $J^\theta(t, x, i)$ to be differentiable with respect to $\theta$, together with the integrability condition
$\mathbb{E} \int_0^T e^{-2 \beta t}\|\nabla_\theta J^\theta(t, \widetilde{X}_t^\pi, \alpha_{t-})\|^2 d t<\infty$.
For each $\theta \in \Theta$, define
\begin{equation} \label{eq:M_theta_def}
    M_t^\theta := e^{-\beta t} J^\theta(t,\widetilde{X}_t^\pi,\alpha_t) + \int_0^t e^{-\beta s} \mathcal{R}(s,\widetilde{X}_s^\pi,\alpha_s,\pi)\,ds, \quad t\in[0, T].
\end{equation}
The terminal condition of the critic immediately gives
$$
M_T^\theta=e^{-\beta T} h(\widetilde{X}_T^\pi)+\int_0^T e^{-\beta s} \mathcal{R}(s, \widetilde{X}_s^\pi, \alpha_s,\pi) d s=G^\pi.
$$
Hence every admissible critic $J^\theta(t, x, i)$ generates a process $M^\theta$ with the same terminal value as the true value process:
$$
M_T^\theta=G^\pi, \quad \theta \in \Theta .
$$
The distinction is that, for a general approximation $J^\theta(t, x, i)$, the process $M^\theta$ need not be a martingale. When
$J^\theta(t, x, i)=J(t, x, i ; \pi)$,
the process $M^\theta$ coincides with $M^\pi$, which satisfies
$M_t^\pi=\mathbb{E}[G^\pi \mid \mathcal{G}_t]$,
and is therefore the square-integrable $\mathbb{G}$-martingale. This observation provides the basis for constructing martingale-based critic objectives.

Motivated by the representation \eqref{eq:martingale_M}, a natural idea is to approximate this conditional expectation by the parameterized process $M_t^\theta$. We therefore define the martingale loss by the integrated mean-square error
\begin{equation}\label{eq:ML(theta)-definition}
M L(\theta) :=\frac{1}{2} \mathbb{E} \int_0^T|G^\pi-M_t^\theta|^2 d t =\frac{1}{2} \mathbb{E} \int_0^T|M_T^\theta-M_t^\theta|^2 d t,
\end{equation}
where the second equality follows from $M_T^\theta=G^\pi$.
We now express the loss in terms of the critic and the observed rewards. Substituting the definitions of $G^\pi$ in \eqref{eq:definition-of-G-pi} and $M^\theta$ in \eqref{eq:M_theta_def}, we obtain
\begin{align}\label{eq:td_error}
G^\pi-M_t^\theta & =\int_t^T e^{-\beta s} \mathcal{R}(s, \widetilde{X}_s^\pi, \alpha_s,\pi) d s+e^{-\beta T} h(\widetilde{X}_T^\pi)-e^{-\beta t} J^\theta(t, \widetilde{X}_t^\pi, \alpha_t) \notag\\
& =e^{-\beta t}\bigg[\int_t^T e^{-\beta(s-t)} \mathcal{R}(s, \widetilde{X}_s^\pi, \alpha_s,\pi) d s+e^{-\beta(T-t)} h(\widetilde{X}_T^\pi)-J^\theta(t, \widetilde{X}_t^\pi, \alpha_t)\bigg].
\end{align}
Substituting \eqref{eq:td_error} into \eqref{eq:ML(theta)-definition} yields
\begin{equation}\label{eq:ML(theta)-weighted-least-squares criterion}
M L(\theta)=\frac{1}{2} \mathbb{E} \int_0^T e^{-2 \beta t} \bigg| J^\theta(t, \widetilde{X}_t^\pi, \alpha_t) -\int_t^T e^{-\beta(s-t)} \mathcal{R}(s, \widetilde{X}_s^\pi, \alpha_s,\pi) d s -e^{-\beta(T-t)} h(\widetilde{X}_T^\pi)\bigg|^2 d t.
\end{equation}
Hence, the martingale loss has a direct least-squares interpretation. At each time $t$, the critic
$J^\theta(t, \widetilde{X}_t^\pi, \alpha_t)$
is compared with the realized discounted return from $t$ to $T$,
$$
\int_t^T e^{-\beta(s-t)} \mathcal{R}(s, \widetilde{X}_s^\pi, \alpha_s, \pi) d s+e^{-\beta(T-t)} h(\widetilde{X}_T^\pi),
$$
and the squared error is weighted by $e^{-2 \beta t} $.
Therefore, minimizing $M L(\theta)$ amounts to fitting the critic to the future discounted return along the relaxed state-regime trajectories. The next result makes this interpretation precise by decomposing $M L(\theta)$ into a term independent of $\theta$ and a weighted value-function approximation error. Consequently, minimizing the martingale loss is equivalent to minimizing the corresponding weighted approximation error.

\begin{theorem}[Martingale-loss decomposition]
Let the preceding assumptions hold. Then, for any $\theta \in \Theta$,
\begin{equation} \label{eq:ML_decomp}
    ML(\theta) = C^\pi + \frac{1}{2} \mathbb{E} \int_0^T e^{-2\beta t} \big| J(t,\widetilde{X}_t^\pi,\alpha_t;\pi) - J^\theta(t,\widetilde{X}_t^\pi,\alpha_t) \big|^2dt,
\end{equation}
where
$$
C^\pi := \frac{1}{2} \mathbb{E} \int_0^T |G^\pi-M_t^\pi|^2dt=
\frac{1}{2} \int_0^T \mathbb{E}[\operatorname{Var}\left(G^\pi \mid \mathcal{G}_t\right)] d t
$$
is independent of $\theta$. Consequently,
\begin{equation*} %\label{eq:ML_argmin}
    \underset{\theta\in\Theta}{\arg\min}\,ML(\theta) = \underset{\theta\in\Theta}{\arg\min}\, \mathbb{E} \int_0^T e^{-2\beta t} \big| J(t,\widetilde{X}_t^\pi,\alpha_t;\pi) - J^\theta(t,\widetilde{X}_t^\pi,\alpha_t) \big|^2dt.
\end{equation*}
\end{theorem}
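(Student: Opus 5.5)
The plan is a Pythagorean (orthogonal projection) decomposition in $L^2$, carried out separately at each fixed $t$ and then integrated over $[0,T]$. Fix $t\in[0,T]$ and write
\[
G^\pi - M_t^\theta = \big(G^\pi - M_t^\pi\big) + \big(M_t^\pi - M_t^\theta\big).
\]
By the definitions \eqref{eq:definition-of-M_t-pi} and \eqref{eq:M_theta_def}, the accumulated-reward integrals cancel in the second bracket. This gives
\[
M_t^\pi - M_t^\theta = e^{-\beta t}\big[J(t,\widetilde X_t^\pi,\alpha_t;\pi) - J^\theta(t,\widetilde X_t^\pi,\alpha_t)\big],
\]
which is $\mathcal G_t$-measurable. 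The first bracket is $G^\pi - \mathbb E[G^\pi\mid\mathcal G_t]$ by \eqref{eq:martingale_M}.

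Expanding the square and taking expectations, the cross term is
\[
\mathbb E\big[(G^\pi - M_t^\pi)(M_t^\pi - M_t^\theta)\big].
\]
It vanishes by the tower property: conditioning on $\mathcal G_t$ and pulling out the $\mathcal G_t$-measurable factor leaves $\mathbb E[G^\pi - M_t^\pi\mid\mathcal G_t]=0$.

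To justify this we need both factors in $L^2$, so the product is integrable. For $G^\pi-M_t^\pi$ this holds because $G^\pi\in L^2$ (stated after \eqref{eq:definition-of-G-pi}) and conditional expectation is an $L^2$ contraction. For $M_t^\pi-M_t^\theta$ this holds for a.e. $t$ by the standing square-integrability of $J^\theta$ and of $J$. The latter follows from $M^\pi$ being square-integrable together with the reward integral being in $L^2$ via Lemma \ref{lem:growth-condition}. I expect this integrability bookkeeping, and the handling of "a.e. $t$" versus "every $t$", to be the only delicate point. Working with $dt\otimes d\mathbb P$-integrability and Fubini avoids needing pointwise-in-$t$ statements.

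Thus for a.e. $t$,
\[
\mathbb E|G^\pi - M_t^\theta|^2 = \mathbb E|G^\pi - M_t^\pi|^2 + e^{-2\beta t}\,\mathbb E\big|J - J^\theta\big|^2 .
\]
Integrating over $[0,T]$, multiplying by $\tfrac12$, and exchanging expectation and $dt$-integral by Tonelli (all terms are nonnegative) gives \eqref{eq:ML_decomp} with $C^\pi=\tfrac12\mathbb E\int_0^T|G^\pi-M_t^\pi|^2dt$. This constant involves only $\pi$ and not $\theta$.

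The identity $\mathbb E|G^\pi-\mathbb E[G^\pi\mid\mathcal G_t]|^2=\mathbb E[\operatorname{Var}(G^\pi\mid\mathcal G_t)]$ is the definition of conditional variance followed by the tower property; it yields the second expression for $C^\pi$. Finiteness of $C^\pi$ follows from $C^\pi\le \tfrac12 T\,\mathbb E|G^\pi|^2$.

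The argmin statement is then immediate. $ML(\theta)$ differs from one half of the weighted approximation error by the finite $\theta$-independent constant $C^\pi$, so the two functionals have the same minimizers over $\Theta$.
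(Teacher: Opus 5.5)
Your proposal is correct and follows the paper's own route: split $G^\pi-M_t^\theta$ through $M_t^\pi$, kill the cross term by the tower property since $M_t^\pi-M_t^\theta=e^{-\beta t}(J-J^\theta)$ is $\mathcal{G}_t$-measurable, integrate in $t$, and identify $C^\pi$ via the conditional variance. Your extra integrability bookkeeping (a.e.\ $t$, Tonelli, and the bound $C^\pi\le \tfrac12 T\,\mathbb{E}|G^\pi|^2$) tightens steps the paper leaves implicit.
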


\begin{proof}
Noting $G^\pi-M_t^\theta=(G^\pi-M_t^\pi)+(M_t^\pi-M_t^\theta)$,
we obtain
\begin{equation*}
\mathbb{E}|G^\pi-M_t^\theta|^2= \mathbb{E}|G^\pi-M_t^\pi|^2  +\mathbb{E}|M_t^\pi-M_t^\theta|^2+2 \mathbb{E}[(G^\pi-M_t^\pi)(M_t^\pi-M_t^\theta)]  .
\end{equation*}
It follows from \eqref{eq:martingale_M} that
$\mathbb{E}[G^\pi-M_t^\pi \mid \mathcal{G}_t]=0 $. Since $M_t^\pi-M_t^\theta$ is $\mathcal{G}_t$-measurable, we have
\begin{equation*}
\mathbb{E}[(G^\pi-M_t^\pi)(M_t^\pi-M_t^\theta)] =\mathbb{E}[(M_t^\pi-M_t^\theta) \mathbb{E}[G^\pi-M_t^\pi \mid \mathcal{G}_t]]=0 .
\end{equation*}
Hence,
$$
\mathbb{E}|G^\pi-M_t^\theta|^2=\mathbb{E}|G^\pi-M_t^\pi|^2+\mathbb{E}|M_t^\pi-M_t^\theta|^2 .
$$
Integrating over $t \in[0, T]$ and multiplying by $1 / 2$ yields
\begin{equation}\label{eq:ML_theta_decomposition}
M L(\theta)= \frac{1}{2} \mathbb{E} \int_0^T\left|G^\pi-M_t^\pi\right|^2 d t  +\frac{1}{2} \mathbb{E} \int_0^T\left|M_t^\pi-M_t^\theta\right|^2 d t.
\end{equation}
For the first term of \eqref{eq:ML_theta_decomposition}, it follows from \eqref{eq:martingale_M} that
$\mathbb{E}[|G^\pi-M_t^\pi|^2 \mid \mathcal{G}_t]=\operatorname{Var}\left(G^\pi \mid \mathcal{G}_t\right),$
and therefore
$$
\frac{1}{2} \mathbb{E} \int_0^T\left|G^\pi-M_t^\pi\right|^2 d t=\frac{1}{2} \int_0^T \mathbb{E}\left[\operatorname{Var}\left(G^\pi \mid \mathcal{G}_t\right)\right] d t=C^\pi .
$$
This term depends only on the policy $\pi$ and is therefore independent of $\theta$.

As for the second term in \eqref{eq:ML_theta_decomposition}, applying \eqref{eq:definition-of-M_t-pi} and \eqref{eq:M_theta_def} gives
\begin{equation}\label{eq:M_t-pi-M_t-theta}
M_t^\pi-M_t^\theta=e^{-\beta t}\left[J(t, \widetilde{X}_t^\pi, \alpha_t ; \pi)-J^\theta(t, \widetilde{X}_t^\pi, \alpha_t)\right] .
\end{equation}
Therefore,
$$
|M_t^\pi-M_t^\theta|^2=e^{-2 \beta t}|J(t, \widetilde{X}_t^\pi, \alpha_t ; \pi)-J^\theta(t, \widetilde{X}_t^\pi, \alpha_t)|^2 .
$$
Substituting this identity into \eqref{eq:ML_theta_decomposition} gives
$$
M L(\theta)=C^\pi+\frac{1}{2} \mathbb{E} \int_0^T e^{-2 \beta t}|J(t, \widetilde{X}_t^\pi, \alpha_t ; \pi)-J^\theta(t, \widetilde{X}_t^\pi, \alpha_t)|^2 d t .
$$
Finally, $C^\pi$ is independent of $\theta$, and the multiplicative factor $1 / 2$ does not affect the minimizer. Hence,
$$
\underset{\theta \in \Theta}{\arg \min } M L(\theta)=\underset{\theta \in \Theta}{\arg \min }\, \mathbb{E} \int_0^T e^{-2 \beta t}|J(t, \widetilde{X}_t^\pi, \alpha_t ; \pi)-J^\theta(t, \widetilde{X}_t^\pi, \alpha_t)|^2 d t .
$$
\end{proof}

The preceding theorem shows that, for a fixed admissible policy $\pi$, the martingale loss $M L(\theta)$ differs from the discounted mean-square critic error $\mathbb{E}\int_0^T e^{-2\beta t} | J(t,\widetilde{X}_t^\pi,\alpha_t;\pi) - J^\theta(t,\widetilde{X}_t^\pi,\alpha_t) |^2dt$
only by the constant $C^\pi$, which is independent of $\theta$. Thus, regime-switching enters the approximation problem through both the regime-dependent value function and the distribution of $(t, \widetilde{X}_t^\pi, \alpha_t)$. The second term is therefore naturally interpreted as an approximation error under the time-state-regime occupation measure generated by the policy \(\pi\). By contrast, the constant $C^\pi$, depends on the conditional uncertainty of future returns and is not determined by the occupation measure alone.

When $\beta>0$, the factor $e^{-2 \beta t}$ places different weights on approximation errors at different times. Therefore, if the critic family cannot represent the value function, minimizing this weighted error need not yield the same minimizers as minimizing the unweighted error. To remove this deterministic time weight, define
\begin{equation*}
    ML_w(\theta) := \frac{1}{2} \mathbb{E}\int_0^T e^{2\beta t} |M_T^\theta-M_t^\theta|^2 dt,
\end{equation*}
which imply that $0 \leq M L_w(\theta) \leq e^{2 \beta T} M L(\theta)<\infty$.

Applying the same orthogonal decomposition as above gives
\begin{align*}
M L_w(\theta) & =\frac{1}{2} \mathbb{E} \int_0^T e^{2 \beta t}\left|G^\pi-M_t^\pi\right|^2 d t+\frac{1}{2} \mathbb{E} \int_0^T e^{2 \beta t}\left|M_t^\pi-M_t^\theta\right|^2 d t \\
& =C_w^\pi+\frac{1}{2} \mathbb{E} \int_0^T\left|J(t, \widetilde{X}_t^\pi, \alpha_t ; \pi)-J^\theta(t, \widetilde{X}_t^\pi, \alpha_t)\right|^2 d t,
\end{align*}
where $C_w^\pi := \frac{1}{2} \mathbb{E} \int_0^T e^{2 \beta t}|G^\pi-M_t^\pi|^2 d t$
is independent of $\theta$ and satisfies $0 \leq C_w^\pi \leq e^{2 \beta T} C^\pi<\infty .$
Consequently,
\begin{equation*}
    \arg\min_{\theta\in\Theta}ML_w(\theta) = \arg\min_{\theta\in\Theta} \mathbb{E}\int_0^T \left| J(t,\widetilde{X}_t^\pi,\alpha_t;\pi) - J^\theta(t,\widetilde{X}_t^\pi,\alpha_t) \right|^2dt.
\end{equation*}
In particular, when $\beta=0$, the two losses coincide: $M L_w(\theta)=M L(\theta)$.
Suppose now that the critic class is realizable along the policy-induced trajectories, in the sense that there exists $\theta^\pi \in \Theta$ such that
$$
J^{\theta^\pi}(t, \widetilde{X}_t^\pi, \alpha_t)=J(t, \widetilde{X}_t^\pi, \alpha_t ; \pi), \quad d t \otimes d \mathbb{P} \text {-a.e.}
$$
Then both the weighted and unweighted approximation errors can attain zero. Since $0<e^{-2 \beta T} \leq e^{-2 \beta t} \leq 1, t \in[0, T],$
the weighted and unweighted errors have the same zero set. Therefore,
\begin{align*}
\underset{\theta \in \Theta}{\arg \min } M L(\theta) & =\underset{\theta \in \Theta}{\arg \min } M L_w(\theta) \\
& =\left\{\theta \in \Theta: J^\theta(t, \widetilde{X}_t^\pi, \alpha_t)=J(t, \widetilde{X}_t^\pi, \alpha_t ; \pi), \quad d t \otimes d \mathbb{P} \text {-a.e. }\right\} .
\end{align*}
Thus, under realizability, both losses identify the same critic along the time-state-regime trajectories visited under $\pi$. This identification is naturally an almost-everywhere statement with respect to the corresponding occupation measure.

For sampled-trajectory implementation, define the realized discounted return from time $t$ by
\begin{equation*}
    \widehat G_t^\pi := \int_t^T e^{-\beta(s-t)} \mathcal R(s,\widetilde{X}_s^\pi,\alpha_s,\pi)\,ds + e^{-\beta(T-t)} h(\widetilde{X}_T^\pi).
\end{equation*}
By the definition of the value function and the Markov property, we have
$$
\mathbb{E}[\widehat{G}_t^\pi \mid \mathcal{G}_t]=J(t, \widetilde{X}_t^\pi, \alpha_t ; \pi).
$$
Moreover, combining this with \eqref{eq:td_error} yields
\begin{equation}\label{eq:G_pi-M_t-theta}
G^\pi-M_t^\theta=e^{-\beta t}[\widehat{G}_t^\pi-J^\theta(t, \widetilde{X}_t^\pi, \alpha_t)].
\end{equation}
Therefore, under the terminal condition $J^\theta(T, x, i)=h(x)$, so that $M_T^\theta=G^\pi$, the two loss functions can be written as
\begin{equation*}
M L(\theta) =\frac{1}{2} \mathbb{E} \int_0^T e^{-2 \beta t}\left|\widehat{G}_t^\pi-J^\theta(t, \widetilde{X}_t^\pi, \alpha_t)\right|^2 d t, \quad
M L_w(\theta) =\frac{1}{2} \mathbb{E} \int_0^T\left|\widehat{G}_t^\pi-J^\theta(t, \widetilde{X}_t^\pi, \alpha_t)\right|^2 d t.
\end{equation*}
Hence, both losses can be evaluated using the same sampled future return $\widehat{G}_t^\pi$, and differ only by the deterministic weight $e^{-2 \beta t}$.
If the terminal condition is not imposed exactly, the fixed-return identity \eqref{eq:G_pi-M_t-theta}
still holds. However, $M_T^\theta \neq G^\pi$
in general. More precisely,
\begin{equation*}
M_T^\theta-M_t^\theta= e^{-\beta t}\left[\widehat{G}_t^\pi-J^\theta(t, \widetilde{X}_t^\pi, \alpha_t)\right] +e^{-\beta T}\left[J^\theta(T, \widetilde{X}_T^\pi, \alpha_T)-h(\widetilde{X}_T^\pi)\right] .
\end{equation*}
Therefore, the regression losses with the fixed target $\widehat{G}_t^\pi$ coincides with the terminal-increment formulation precisely when the terminal discrepancy vanishes. Under the imposed terminal condition, the two formulations are therefore identical.

\subsection{The quadratic-variation limit of MSTDE}

We next study the mean-square TD error under a fixed admissible policy $\pi$. The key point is to separate the finite-variation component associated with the policy-evaluation residual from the martingale components generated by the diffusion and regime-switching. To this end, we first derive the semimartingale representation of $M^\theta$.

For each $\theta \in \Theta$ and $i \in \mathcal{M}$, assume that
$J^\theta(\cdot, \cdot, i) \in C^{1,2}([0, T) \times \mathbb{R}^d) \cap C([0, T] \times \mathbb{R}^d)$. Throughout this section, the policy $\pi$ is fixed. Hence, the dynamics of $(\widetilde{X}^\pi, \alpha)$ do not depend on $\theta$. Let $\tilde{r}^\pi$ denote the policy-averaged running reward appearing in the definition of $M^\theta$.

For $i \in \mathcal{M}$, define the policy-evaluation residual associated with the critic $J^\theta(t, x, i)$ by
\begin{align} \label{eq:F_theta_def}
    F^\theta(t,x,i) := & J_t^\theta(t, x, i)+\tilde{b}(t, x, i, \pi)^{\top} J_x^\theta(t, x, i) +\frac{1}{2} \operatorname{Tr}\left[\tilde{c}(t, x, i, \pi) J_{x x}^\theta(t, x, i)\right] \nonumber\\
    & +\sum_{j=1}^L q_{i j} J^\theta(t, x, j)+\tilde{r}^\pi(t, x, i)-\beta J^\theta(t, x, i) .
\end{align}
Applying the generalized Itô's formula to $M_t^\theta$, gives
\begin{align}\label{eq:dM_theta_Ito}
    dM_t^\theta =\;& e^{-\beta t} F^\theta(t,\widetilde{X}_t^\pi,\alpha_{t-})dt + e^{-\beta t} J_x^\theta(t,\widetilde{X}_t^\pi,\alpha_{t-})^\top \widetilde{\sigma} (t,\widetilde{X}_t^\pi,\alpha_{t-},\pi) \,d\widetilde W_t \nonumber \\
    &\quad+ e^{-\beta t} \sum_{i=1}^{L} \sum_{j\neq i} \mathbf{1}_{\{\alpha_{t-}=i\}} \Big[ J^\theta(t,\widetilde{X}_t^\pi,j) - J^\theta(t,\widetilde{X}_t^\pi,i) \Big] d\widetilde{N}_{ij}(t).
\end{align}
Thus $M^\theta$ can be decomposed as
$$
M_t^\theta = M_0^\theta + \mathcal{D}_t^\theta + \mathcal{N}_t^\theta,
$$
where the finite-variation part is
$$
\mathcal{D}_t^\theta = \int_0^t e^{-\beta s} F^\theta(s, \widetilde{X}_s^\pi, \alpha_{s-}) d s,
$$
and $\mathcal{N}^\theta$ collects the Brownian and compensated regime-switching martingale terms:
\begin{align}\label{eq:mathcal{N}-theta}
    \mathcal{N}^\theta =\;& e^{-\beta t} J_x^\theta(t,\widetilde{X}_t^\pi,\alpha_{t-})^\top \widetilde{\sigma} (t,\widetilde{X}_t^\pi,\alpha_{t-},\pi) \,d\widetilde W_t \nonumber \\
    &\quad+ e^{-\beta t} \sum_{i=1}^{L} \sum_{j\neq i} \mathbf{1}_{\{\alpha_{t-}=i\}} \Big[ J^\theta(t,\widetilde{X}_t^\pi,j) - J^\theta(t,\widetilde{X}_t^\pi,i) \Big] d\widetilde{N}_{ij}(t).
\end{align}
To describe its predictable quadratic variation, define
\begin{equation*} \label{eq:Gamma_theta_def}
    \Gamma^\theta(t,x,i) :=  J_x^\theta(t,x,i)^\top \tilde{c}(t,x,i,\pi) J^\theta_x(t,x,i) + \sum_{j=1}^L q_{ij} \left[ J^\theta(t,x,j)-J^\theta(t,x,i) \right]^2.
\end{equation*}
The first term represents the contribution of the Brownian component. Recall \eqref{eq:the-relaxed-drift-covariance}, we have
\begin{equation*} %\label{eq:diffusion_term_integral}
     J_x^\theta(t,x,i)^\top \tilde{c}(t,x,i,\pi) J_x^\theta(t,x,i) = \int_{\mathcal{A}} \left| \sigma(t,x,i,a)^\top J_x^\theta(t,x,i) \right|^2 \pi(da\mid t,x,i).
\end{equation*}
The Brownian martingale is continuous, while the compensated regime-switching martingale is purely discontinuous, so their predictable covariation is zero. Moreover, different transition-counting processes have no simultaneous jumps. Since
$d \widetilde{N}_{i j}(t)=d N_{i j}(t)-\mathbf{1}_{\{\alpha_{t-}=i\}} q_{i j} d t, i \neq j$,
the predictable quadratic variation of the martingale component $\mathcal{N}^\theta$ is therefore
$$
d\left\langle\mathcal{N}^\theta\right\rangle_t=e^{-2 \beta t} \Gamma^\theta(t, \widetilde{X}_t^\pi, \alpha_{t-}) d t .
$$
Equivalently,
$$
\left\langle\mathcal{N}^\theta\right\rangle_T=\int_0^T e^{-2 \beta t} \Gamma^\theta(t, \widetilde{X}_t^\pi, \alpha_{t-}) d t .
$$
For later use, define
\begin{equation*}
  D(\theta) := \mathbb{E} \int_0^T e^{-2\beta t} |F^\theta(t,\widetilde{X}_t^\pi,\alpha_{t-})|^2dt, \quad \mathcal V_\beta(\theta) := \mathbb{E} \int_0^T e^{-2\beta t} \Gamma^\theta(t,\widetilde{X}_t^\pi,\alpha_{t-})dt.
\end{equation*}
Whenever
$D(\theta)<\infty$, $\mathcal{V}_\beta(\theta)<\infty$, the drift term is square-integrable and $\mathcal{N}^\theta$ is a square-integrable martingale. In particular,
$$
\mathbb{E}|\mathcal{N}_T^\theta|^2=\mathbb{E}\langle\mathcal{N}^\theta\rangle_T=\mathcal{V}_\beta(\theta) .
$$
Consequently, the integrated decomposition can be written on $[0, T]$ as
\begin{equation}\label{eq:the-semimartingale-decomposition}
M_t^\theta=M_0^\theta+\int_0^t e^{-\beta s} F^\theta(s, \widetilde{X}_s^\pi, \alpha_{s-}) d s+\mathcal{N}_t^\theta, \quad 0 \leq t \leq T.
\end{equation}

Now, let $\mathcal{P} = \{0=t_0<t_1<\cdots<t_N=T\}$ be a deterministic partition of $[0, T]$, with mesh size $|\mathcal{P}| := \max_{0\leq k\leq N-1} (t_{k+1}-t_k)$.
For each $k=0, \ldots, N-1$, define the exact discounted TD increment by
\begin{align}
    \delta_k^\theta := &M_{t_{k+1}}^\theta-M_{t_k}^\theta \nonumber \\
    = &e^{-\beta t_{k+1}} J^\theta( t_{k+1}, \widetilde{X}_{t_{k+1}}^\pi, \alpha_{t_{k+1}} ) - e^{-\beta t_k} J^\theta( t_k, \widetilde{X}_{t_k}^\pi, \alpha_{t_k} )+ \int_{t_k}^{t_{k+1}} e^{-\beta s} \tilde r^\pi(s,\widetilde{X}_s^\pi,\alpha_s)\,ds. \label{eq:delta_k_theta_def}
\end{align}
The corresponding mean-square TD error is
$$
\operatorname{MSTDE}_{\mathcal{P}}(\theta) := \mathbb{E} \sum_{k=0}^{N-1} |\delta_k^\theta|^2.
$$
The following theorem characterizes the limit of $\operatorname{MSTDE}_{\mathcal{P}}(\theta)$ as $|\mathcal{P}| \rightarrow 0$ and provides an explicit error estimate in terms of the mesh size. %In particular, it shows that the Bellman-residual contribution vanishes in the limit, while the diffusion and regime-switching contributions remain through the expected quadratic variation of $\mathcal{N}^\theta$, as specified by $\Gamma^\theta$.

\begin{theorem}[MSTDE limit and mesh-error bound]
Let the preceding assumptions hold. Then, for any $\theta \in \Theta$ and any deterministic partition $\mathcal{P}$ of $[0, T]$, we have
\begin{equation} \label{eq:MSTDE_bound}
    \left| \operatorname{MSTDE}_{\mathcal{P}}(\theta) - \mathcal V_\beta(\theta) \right| \leq |\mathcal{P}|D(\theta) + 2 \sqrt{ |\mathcal{P}|D(\theta)\mathcal V_\beta(\theta) }.
\end{equation}
Consequently, for every fixed $\theta$,
\begin{equation} \label{eq:MSTDE_limit}
    \lim _{|\mathcal{P}| \rightarrow 0}\operatorname{MSTDE}_{\mathcal{P}}(\theta) = \mathcal V_\beta(\theta) = \mathbb{E} \langle\mathcal{N}^\theta\rangle_T .
\end{equation}
\end{theorem}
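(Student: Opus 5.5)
We would prove the bound by splitting each TD increment into its drift and martingale parts using the semimartingale decomposition \eqref{eq:the-semimartingale-decomposition}. For each $k$, write
\begin{equation*}
\delta_k^\theta = A_k + B_k, \qquad A_k := \int_{t_k}^{t_{k+1}} e^{-\beta s} F^\theta(s,\widetilde{X}_s^\pi,\alpha_{s-})\,ds, \qquad B_k := \mathcal{N}_{t_{k+1}}^\theta - \mathcal{N}_{t_k}^\theta .
\end{equation*}
Then $\operatorname{MSTDE}_{\mathcal{P}}(\theta) = \mathbb{E}\sum_k B_k^2 + \mathbb{E}\sum_k A_k^2 + 2\,\mathbb{E}\sum_k A_k B_k$. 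The plan is to identify the first sum exactly with $\mathcal V_\beta(\theta)$, bound the second by $|\mathcal{P}|D(\theta)$, and control the cross term by Cauchy--Schwarz.

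For the martingale part, we would use that $\mathcal{N}^\theta$ is a square-integrable martingale whenever $D(\theta),\mathcal V_\beta(\theta)<\infty$; if either is infinite, the bound is trivial. Orthogonality of martingale increments gives $\mathbb{E}B_k^2 = \mathbb{E}[\langle\mathcal{N}^\theta\rangle_{t_{k+1}} - \langle\mathcal{N}^\theta\rangle_{t_k}]$. Summing over $k$ yields $\mathbb{E}\sum_k B_k^2 = \mathbb{E}\langle\mathcal{N}^\theta\rangle_T = \mathcal V_\beta(\theta)$, using the predictable quadratic variation formula $d\langle\mathcal{N}^\theta\rangle_t = e^{-2\beta t}\Gamma^\theta\,dt$ established before the theorem, which includes the regime-switching contribution.

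For the drift part, Cauchy--Schwarz in time on each subinterval gives
\begin{equation*}
A_k^2 \le (t_{k+1}-t_k)\int_{t_k}^{t_{k+1}} e^{-2\beta s}|F^\theta(s,\widetilde{X}_s^\pi,\alpha_{s-})|^2\,ds .
\end{equation*}
Hence $\mathbb{E}\sum_k A_k^2 \le |\mathcal{P}|D(\theta)$.

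For the cross term, Cauchy--Schwarz, first for the sum and then for the expectation, gives
\begin{equation*}
\Bigl|\mathbb{E}\sum_k A_kB_k\Bigr| \le \Bigl(\mathbb{E}\sum_k A_k^2\Bigr)^{1/2}\Bigl(\mathbb{E}\sum_k B_k^2\Bigr)^{1/2} \le \sqrt{|\mathcal{P}|D(\theta)\mathcal V_\beta(\theta)} .
\end{equation*}
Combining the three estimates gives \eqref{eq:MSTDE_bound}. Letting $|\mathcal{P}|\to0$ with $\theta$ fixed, the right-hand side vanishes, which yields \eqref{eq:MSTDE_limit}.

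The main point requiring care is the rigorous justification that $\mathcal{N}^\theta$ is a true square-integrable martingale, not merely a local one, so that $\mathbb{E}B_k^2$ equals the expected increment of $\langle\mathcal{N}^\theta\rangle$. This needs the jump integrands to be predictable, which is why they are evaluated at $\alpha_{t-}$. The plan is to localize with stopping times $\tau_n\uparrow T$ and apply the identity to the stopped processes. We then pass to the limit by monotone convergence for $\langle\mathcal{N}^\theta\rangle_{\tau_n}$ and by Fatou's lemma combined with $L^2$-boundedness, using $\mathcal V_\beta(\theta)<\infty$. The remaining manipulations are elementary.
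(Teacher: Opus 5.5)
Your proposal is correct and follows the paper's proof essentially step for step. It uses the same splitting of $\delta_k^\theta$ into drift and martingale increments, the martingale isometry giving $\mathcal V_\beta(\theta)$, the subinterval Cauchy--Schwarz bound $|\mathcal P|D(\theta)$, and Cauchy--Schwarz for the cross term. Your localization step is extra care that the paper omits; note only that Fatou alone gives $\mathbb{E}|\mathcal N_t^\theta|^2\le \mathbb{E}\langle\mathcal N^\theta\rangle_t$, so for equality you should use Doob's $L^2$ maximal inequality with dominated convergence, or the $L^2$-Cauchy property of the stopped martingales.
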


\begin{proof}
For any given $\theta \in \Theta$, the semimartingale representation of $M^\theta$ gives
$$
\delta_k^\theta=\mathcal{D}_{t_{k+1}}^\theta-\mathcal{D}_{t_k}^\theta+\mathcal{N}_{t_{k+1}}^\theta-\mathcal{N}_{t_k}^\theta, \quad k=0, \ldots, N-1 .
$$
By the definition of $\mathcal{D}^\theta$ and the Cauchy-Schwarz inequality, we obtain
\begin{align}\label{eq:D_tk}
\mathbb{E} \sum_{k=0}^{N-1}|\mathcal{D}_{t_{k+1}}^\theta-\mathcal{D}_{t_k}^\theta|^2 & =\mathbb{E} \sum_{k=0}^{N-1}\left|\int_{t_k}^{t_{k+1}} e^{-\beta s} F^\theta(s, \widetilde{X}_s^\pi, \alpha_{s-}) d s\right|^2 \notag\\
& \leq \sum_{k=0}^{N-1}\left(t_{k+1}-t_k\right) \mathbb{E} \int_{t_k}^{t_{k+1}} e^{-2 \beta s}|F^\theta(s, \widetilde{X}_s^\pi, \alpha_{s-})|^2 d s \notag\\
& \leq|\mathcal{P}| D(\theta).
\end{align}
On the other hand, since $\mathcal{N}^\theta$ is a square-integrable martingale starting from zero, the martingale isometry yields
\begin{equation*}
\mathbb{E} \sum_{k=0}^{N-1}|\mathcal{N}_{t_{k+1}}^\theta-\mathcal{N}_{t_k}^\theta|^2 =\mathbb{E} \sum_{k=0}^{N-1}(\langle\mathcal{N}^\theta\rangle_{t_{k+1}}-\langle\mathcal{N}^\theta\rangle_{t_k}) =\mathbb{E}\left\langle\mathcal{N}^\theta\right\rangle_T=\mathcal{V}_\beta(\theta).
\end{equation*}
Applying the Cauchy-Schwarz inequality once more, we have
\begin{align*}
 \Big|\mathbb{E} \sum_{k=0}^{N-1}(\mathcal{D}_{t_{k+1}}^\theta-\mathcal{D}_{t_k}^\theta)(\mathcal{N}_{t_{k+1}}^\theta-\mathcal{N}_{t_k}^\theta)\Big|  &\leq\Big(\mathbb{E} \sum_{k=0}^{N-1}|\mathcal{D}_{t_{k+1}}^\theta-\mathcal{D}_{t_k}^\theta|^2\Big)^{1/2}\Big(\mathbb{E} \sum_{k=0}^{N-1}|\mathcal{N}_{t_{k+1}}^\theta-\mathcal{N}_{t_k}^\theta|^2\Big)^{1 / 2} \\
& \leq \sqrt{|\mathcal{P}| D(\theta) \mathcal{V}_\beta(\theta)} .
\end{align*}
Thus, expanding the squared TD increments and using the preceding estimates, we obtain
\begin{align*}
 \left|\operatorname{MSTDE}_{\mathcal{P}}(\theta)-\mathcal{V}_\beta(\theta)\right|  &\leq \mathbb{E} \sum_{k=0}^{N-1}|\mathcal{D}_{t_{k+1}}^\theta-\mathcal{D}_{t_k}^\theta|^2 +2\Big|\mathbb{E} \sum_{k=0}^{N-1}(\mathcal{D}_{t_{k+1}}^\theta-\mathcal{D}_{t_k}^\theta)(\mathcal{N}_{t_{k+1}}^\theta-\mathcal{N}_{t_k}^\theta)\Big| \\
&\leq|\mathcal{P}| D(\theta)+2 \sqrt{|\mathcal{P}| D(\theta) \mathcal{V}_\beta(\theta)}
\end{align*}
Since $D(\theta)$ and $\mathcal{V}_\beta(\theta)$ are finite, letting $|\mathcal{P}| \rightarrow 0$ the desired convergence.

\end{proof}

The theorem shows that, as the observation mesh is refined, the MSTDE converges to the expected quadratic variation of the martingale component $\mathcal{N}^\theta$. The Bellman residual affects the finite-mesh error through $D(\theta)$, but it does not appear as a separate leading-order term in the limiting objective. Hence, in general,
$\mathcal{V}_\beta(\theta) \neq D(\theta),$
and it also need not coincide with the unweighted squared Bellman residual
$$
\mathbb{E} \int_0^T\left|F^\theta(t, \widetilde{X}_t^\pi, \alpha_{t-})\right|^2 d t .
$$
This distinction is particularly clear when
$\mathcal{V}_\beta(\theta)=0 .$
In this case, the error bound reduces to
$$
0 \leq \operatorname{MSTDE}_{\mathcal{P}}(\theta) \leq|\mathcal{P}| D(\theta),
$$
and therefore
$\operatorname{MSTDE}_{\mathcal{P}}(\theta) \rightarrow 0$
as $|\mathcal{P}| \rightarrow 0$, even if
$D(\theta)>0 .$
Thus, vanishing high-frequency MSTDE does not imply that the Bellman residual itself vanishes.

The convergence can also be made uniform over a class of critic parameters. Let $\Theta_0 \subseteq \Theta$ be nonempty and suppose that
$\sup _{\theta \in \Theta_0} D(\theta)<\infty, \sup _{\theta \in \Theta_0} \mathcal{V}_\beta(\theta)<\infty .$
Then
\begin{equation*}
\sup_{\theta \in \Theta_0}\left|\operatorname{MSTDE}_{\mathcal{P}}(\theta)-\mathcal{V}_\beta(\theta)\right|  \leq|\mathcal{P}| \sup _{\theta \in \Theta_0} D(\theta)+2 \sqrt{|\mathcal{P}|(\sup_{\theta \in \Theta_0} D(\theta))(\sup_{\theta \in \Theta_0} \mathcal{V}_\beta(\theta))} \rightarrow 0 .
\end{equation*}
Therefore, the MSTDE converges uniformly to $\mathcal{V}_\beta$ on $\Theta_0$. This statement concerns convergence of the objective functions themselves; additional conditions are needed to infer convergence of their minimizers or of a stochastic-approximation algorithm.

To understand the drift and martingale parts over a single observation interval, for a fixed $\theta$, assume that the second moments of the drift density and the martingale-variation density are uniformly bounded in time, namely,
\begin{equation*}
 \underset{0 \leq t<T}{\operatorname{ess} \sup } \,\mathbb{E}\left[e^{-2 \beta t}|F^\theta(t, \widetilde{X}_t^\pi, \alpha_{t-})|^2\right]<\infty,  \quad \underset{0 \leq t<T}{\operatorname{ess} \sup } \,\mathbb{E}\left[e^{-2 \beta t} \Gamma^\theta(t, \widetilde{X}_t^\pi, \alpha_{t-})\right]<\infty .
\end{equation*}
Applying \eqref{eq:D_tk} on the interval $[t_k, t_{k+1}]$ gives
$$
\mathbb{E}|\mathcal{D}_{t_{k+1}}^\theta-\mathcal{D}_{t_k}^\theta|^2 =O(h_k^2),
$$
whereas
$$
\mathbb{E}|\mathcal{N}_{t_{k+1}}^\theta-\mathcal{N}_{t_k}^\theta|^2  =\mathbb{E} \int_{t_k}^{t_{k+1}} e^{-2 \beta s} \Gamma^\theta(s, \widetilde{X}_s^\pi, \alpha_{s-}) d s=O(h_k).
$$
The interaction between the two components satisfies
$$
|\mathbb{E}[(\mathcal{D}_{t_{k+1}}^\theta-\mathcal{D}_{t_k}^\theta)(\mathcal{N}_{t_{k+1}}^\theta-\mathcal{N}_{t_k}^\theta)]|  =O(h_k^{3 / 2}) .
$$
Therefore, the one-step mean-square TD error satisfies
$$
\mathbb{E}\left|\delta_k^\theta\right|^2=\mathbb{E} \int_{t_k}^{t_{k+1}} e^{-2 \beta s} \Gamma^\theta(s, \widetilde{X}_s^\pi, \alpha_{s-}) d s+O(h_k^{3 / 2})+O(h_k^2).
$$
The orders above show the basic difference between the two components:
$$
\text { drift contribution }=O\left(h_k^2\right), \quad \text { martingale contribution }=O\left(h_k\right) .
$$
Hence the martingale variation is typically the dominant term in the one-step mean-square TD error.

More precisely, suppose that the average martingale-variation density over the intervals under consideration is bounded away from zero, that is,
$$
\frac{1}{h_k} \mathbb{E} \int_{t_k}^{t_{k+1}} e^{-2 \beta s} \Gamma^\theta(s, \widetilde{X}_s^\pi, \alpha_{s-}) d s\geq c_\theta>0 .
$$
Then
$$
\frac{\mathbb{E}|\mathcal{D}_{t_{k+1}}^\theta-\mathcal{D}_{t_k}^\theta|^2}{\mathbb{E}|\mathcal{N}_{t_{k+1}}^\theta-\mathcal{N}_{t_k}^\theta|^2}=O(h_k) \rightarrow 0.
$$
Thus, whenever the martingale variation is locally nondegenerate, it determines the leading-order behavior of the one-step mean-square TD error. Consequently, refining the observation mesh does not generally transform the unnormalized MSTDE into a squared Bellman-residual criterion. Instead, the high-frequency limit is governed by the quadratic variation of the martingale component.

To characterize the regime-switching contribution to $\mathcal{V}_\beta(\theta)$, for any Borel set $A \subseteq[0, T] \times \mathbb{R}^d$, define the regime-specific occupation measure by
$$
\mu^\pi(A, i) := \mathbb{E} \int_0^T \mathbf{1}_{\{(t, \widetilde{X}_t^\pi) \in A, \alpha_t=i\}} d t, \quad i\in\mathcal{M},
$$
and let $\bar{\mu}^\pi(A) := \sum_{i=1}^L \mu^\pi(A, i)$
be the total occupation measure. Since $\alpha_t=\alpha_{t-}, d t \otimes d \mathbb{P}$-almost every $(t, \omega)$, the jump contribution to $\mathcal{V}_\beta(\theta)$ can be written as
$$
\mathcal{V}_{\beta, Q}(\theta) = \sum_{i=1}^L \sum_{j \neq i} q_{i j} \int_{[0, T] \times \mathbb{R}^d} e^{-2 \beta t}\left[J^\theta(t, x, j)-J^\theta(t, x, i)\right]^2 \mu^\pi(d t, d x, i).
$$
Thus, $\mathcal{V}_{\beta, Q}(\theta)$ measures the regime-to-regime discrepancy of the critic along the state-regime pairs visited by the process, weighted by the transition intensities $q_{i j}$.

Since $\mu^\pi(\cdot, i) \ll \bar{\mu}^\pi$, we write
$$
\rho^\pi(t, x, i) := \frac{d \mu^\pi(\cdot, i)}{d \bar{\mu}^\pi}(t, x), \quad i \in \mathcal{M}.
$$
Then $0 \leq \rho^\pi(t, x, i) \leq 1$ and $\sum_{i=1}^L \rho^\pi(t, x, i)=1$, for $\bar{\mu}^\pi$-a.e.

The following proposition characterizes when the jump contribution is zero.
\begin{proposition}[Zero jump energy]\label{pro:zero-jump-energy}
For any fixed $\theta \in \Theta$, $\mathcal{V}_{\beta, Q}(\theta)=0$ holds if and only if, for every pair $(i, j)$ such that $q_{i j}>0$,
$$
J^\theta(t, x, j)=J^\theta(t, x, i), \quad \mu^\pi(\cdot, i) \text {-a.e. }
$$
Suppose further that $Q$ irreducible and that every regime is visited with positive occupation density in the sense that
$$
\rho^\pi(t, x, i)>0, \quad \bar{\mu}^\pi \text {-a.e. }, \quad i \in \mathcal{M} .
$$
Then $\mathcal{V}_{\beta, Q}(\theta)=0$ implies
$$
J^\theta(t, x, 1)=\cdots=J^\theta(t, x, L), \quad \bar{\mu}^\pi \text {-a.e. }
$$
\end{proposition}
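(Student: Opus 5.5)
The first assertion follows directly from the integral representation of $\mathcal{V}_{\beta,Q}(\theta)$ established just before the statement. Write
\[
\mathcal{V}_{\beta,Q}(\theta)=\sum_{i=1}^L\sum_{j\neq i} q_{ij}\, I_{ij}(\theta),\qquad I_{ij}(\theta):=\int_{[0,T]\times\mathbb R^d} e^{-2\beta t}\big[J^\theta(t,x,j)-J^\theta(t,x,i)\big]^2\mu^\pi(dt,dx,i).
\]
Off-diagonal generator entries satisfy $q_{ij}\ge 0$, and each integrand is nonnegative, so every summand is nonnegative. Hence the sum vanishes if and only if $q_{ij}I_{ij}(\theta)=0$ for every pair $i\neq j$.

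For pairs with $q_{ij}=0$ this imposes nothing. For pairs with $q_{ij}>0$ it forces $I_{ij}(\theta)=0$. Since $e^{-2\beta t}\ge e^{-2\beta T}>0$ on $[0,T]$, this holds if and only if $[J^\theta(t,x,j)-J^\theta(t,x,i)]^2=0$ $\mu^\pi(\cdot,i)$-a.e. Both directions are immediate, and measurability of $J^\theta$ guarantees the integrals make sense. Finiteness of $\mathcal{V}_\beta(\theta)$ is not even needed here, since we are working with a sum of nonnegative terms.

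For the second assertion, fix $i\neq j$ with $q_{ij}>0$. Using $\mu^\pi(dt,dx,i)=\rho^\pi(t,x,i)\,\bar\mu^\pi(dt,dx)$, the first part gives
\[
\int \big[J^\theta(t,x,j)-J^\theta(t,x,i)\big]^2\rho^\pi(t,x,i)\,\bar\mu^\pi(dt,dx)=0 .
\]
Because $\rho^\pi(\cdot,i)>0$ $\bar\mu^\pi$-a.e., we conclude $J^\theta(\cdot,\cdot,j)=J^\theta(\cdot,\cdot,i)$ $\bar\mu^\pi$-a.e. This is the step where the positive-density hypothesis is essential: without it, equality holds only on the support of $\mu^\pi(\cdot,i)$, which is a different set for each $i$, so the equalities could not be chained.

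Next I use irreducibility of $Q$. For any two regimes $k,l$ there is a path $k=i_0,i_1,\dots,i_m=l$ with $q_{i_{r}i_{r+1}}>0$ for each $r$. Each link gives a $\bar\mu^\pi$-a.e. equality $J^\theta(\cdot,\cdot,i_r)=J^\theta(\cdot,\cdot,i_{r+1})$. There are finitely many links, and a finite union of $\bar\mu^\pi$-null sets is null, so $J^\theta(\cdot,\cdot,k)=J^\theta(\cdot,\cdot,l)$ $\bar\mu^\pi$-a.e. Taking the union of exceptional sets over the finitely many pairs in $\mathcal M$ then gives $J^\theta(t,x,1)=\cdots=J^\theta(t,x,L)$ $\bar\mu^\pi$-a.e.

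The only genuinely delicate point is making sure all the equalities live on the same reference measure. That is exactly why I pass from $\mu^\pi(\cdot,i)$ to $\bar\mu^\pi$ through $\rho^\pi$ before chaining along the irreducibility path. Everything else is bookkeeping with nonnegative integrands and finite unions of null sets.
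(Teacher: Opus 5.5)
Your proof is correct and follows the same route as the paper. Nonnegativity of each summand together with $e^{-2\beta t}>0$ gives the first equivalence, and the positive density $\rho^\pi(\cdot,i)$ transfers each $\mu^\pi(\cdot,i)$-a.e.\ equality to $\bar\mu^\pi$ before you chain along an irreducibility path; your explicit treatment of pairs with $q_{ij}=0$ and of the finite union of null sets is slightly more careful than the paper's write-up.
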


\begin{proof}
From the representation of $\mathcal{V}_{\beta, Q}(\theta)$, every term is nonnegative. Hence,
$\mathcal{V}_{\beta, Q}(\theta)=0$
holds if and only if, for every pair $(i, j)$ with $q_{i j}>0$,
$$
\int_{[0, T] \times \mathbb{R}^d} e^{-2 \beta t}\left[J^\theta(t, x, j)-J^\theta(t, x, i)\right]^2 \mu^\pi(d t, d x, i)=0.
$$
By $e^{-2 \beta t}>0,$
the preceding integral is zero if and only if
$J^\theta(t, x, j)=J^\theta(t, x, i)$
for $\mu^\pi(\cdot, i)$-almost every $(t, x)$. This proves the first assertion.

Now assume that
$$
\rho^\pi(t, x, i)>0, \quad \bar{\mu}^\pi \text {-a.e. }
$$
for every $i \in \mathcal{M}$. Since
$d \mu^\pi(\cdot, i)=\rho^\pi(t, x, i) d \bar{\mu}^\pi(t, x)$,
the measures $\mu^\pi(\cdot, i)$ and $\bar{\mu}^\pi$ are equivalent. Therefore, for every transition $i \rightarrow j$ with $q_{i j}>0$,
$$
J^\theta(t, x, j)=J^\theta(t, x, i), \quad \bar{\mu}^\pi \text {-a.e. }
$$
Because $Q$ is irreducible, any two regimes can be connected by a finite sequence
$i=i_0 \rightarrow i_1 \rightarrow \cdots \rightarrow i_m=j$
such that
$q_{i_{\ell} i_{\ell+1}}>0, \ell=0, \ldots, m-1 .$
Applying the preceding equality along this path gives
$$
J^\theta(t, x, i_0)=J^\theta(t, x, i_1)=\cdots=J^\theta(t, x, i_m), \quad \bar{\mu}^\pi \text {-a.e. }
$$
Hence,
$$
J^\theta(t, x, 1)=\cdots=J^\theta(t, x, L), \quad \bar{\mu}^\pi \text {-a.e. }
$$
which proves the second assertion.
\end{proof}

For a further interpretation of the jump contribution, let $\nu=\left(\nu_1, \ldots, \nu_L\right)^{\top}$ the Markov chain, satisfying $\nu^{\top} Q=0$. For fixed $(t, x)$, define the regime-switching quadratic energy
$$
\mathcal{E}_\nu\left(J^\theta ; t, x\right) :=-\sum_{i=1}^L \nu_i J^\theta(t, x, i) \sum_{j=1}^L q_{i j} J^\theta(t, x, j) .
$$
Noting that the stationarity relation $\sum_{i=1}^L \nu_i q_{i j}=0$, we expand the squared regime differences as
\begin{align*}
& \frac{1}{2} \sum_{i, j=1}^L \nu_i q_{i j}[J^\theta(t, x, j)-J^\theta(t, x, i)]^2 \\
& =\frac{1}{2} \sum_{j=1}^L J^\theta(t, x, j)^2 \sum_{i=1}^L \nu_i q_{i j}+\frac{1}{2} \sum_{i=1}^L \nu_i J^\theta(t, x, i)^2 \sum_{j=1}^L q_{i j}-\sum_{i, j=1}^L \nu_i q_{i j} J^\theta(t, x, i) J^\theta(t, x, j) .
\end{align*}
The first two terms vanish, and therefore
$$
\mathcal{E}_\nu(J^\theta ; t, x)=\frac{1}{2} \sum_{i, j=1}^L \nu_i q_{i j}[J^\theta(t, x, j)-J^\theta(t, x, i)]^2 .
$$
Thus, $\mathcal{E}_\nu$ measures the squared variation of the critic across regime transitions, weighted by the stationary regime probabilities and transition intensities. This identity follows directly from stationarity and does not require the Markov chain to be reversible.

The jump contribution appearing in the MSTDE decomposition has a similar form, but it is weighted by the actual regime occupation of the controlled process. Writing
$$
\mu^\pi(d t, d x, i)=\rho^\pi(t, x, i) \bar{\mu}^\pi(d t, d x),
$$
the jump term is
\begin{equation}\label{eq:the-jump-term}
\mathcal{V}_{\beta, Q}(\theta)=\int_{[0, T] \times \mathbb{R}^d} e^{-2 \beta t} \sum_{i=1}^L \rho^\pi(t, x, i) \sum_{j \neq i} q_{i j}\left[J^\theta(t, x, j)-J^\theta(t, x, i)\right]^2 \bar{\mu}^\pi(d t, d x).
\end{equation}
If the regime weights coincide with the stationary distribution,
$$
\rho^\pi(t, x, i)=\nu_i, \quad \bar{\mu}^\pi \text {-a.e. },
$$
then \eqref{eq:the-jump-term} gives
$$
\mathcal{V}_{\beta, Q}(\theta)=2 \int_{[0, T] \times \mathbb{R}^d} e^{-2 \beta t} \mathcal{E}_\nu\left(J^\theta ; t, x\right) \bar{\mu}^\pi(d t, d x).
$$
Hence, in this special case, the jump contribution to MSTDE is exactly the occupation-weighted integral of the stationary regime-switching energy.

It is important, however, to distinguish the stationary distribution of the Markov chain from the conditional regime weights $\rho^\pi(t, x, i)$. Even if $\alpha_t$ has stationary marginal distribution $\nu$, one does not generally have
$$
\rho^\pi(t, x, i)=\nu_i,
$$
because the state process may depend on the current regime. Thus, the MSTDE jump contribution is generally governed by the state-regime occupation law rather than by the stationary distribution alone.

The expression for $\mathcal{V}_{\beta, Q}(\theta)$ also provides a direct interpretation of the jump term. It penalizes discrepancies
$J^\theta(t, x, j)-J^\theta(t, x, i)$
across regime transitions that occur with positive occupation weight and positive transition intensity. In particular, the jump contribution becomes small when the critic varies little across frequently visited regime transitions.

For a fixed critic and a fixed occupation law, the dependence on the off-diagonal transition rates $q_{i j}$ is linear:
$$
\mathcal{V}_{\beta, Q}(\theta) \propto \sum_{i=1}^L \sum_{j \neq i} q_{i j}\left[J^\theta(t, x, j)-J^\theta(t, x, i)\right]^2 .
$$
This linearity explains why increasing a transition intensity strengthens the corresponding jump-energy contribution when all other quantities are kept fixed. In the full regime-switching model, however, changing $Q$ also changes the state-regime occupation law and, in general, the policy value function itself. Therefore, the jump-energy representation should be interpreted as describing the quadratic-variation contribution generated by regime transitions, rather than as a general monotonicity statement for policy-evaluation error.

Most importantly, the jump term is a measure of regime-to-regime variation of the critic, not a direct measure of approximation error relative to the true value function. Consequently, minimizing MSTDE need not force the critic toward the true policy value, even when the true value function belongs to the critic class and the terminal condition is satisfied exactly. The following example illustrates this distinction explicitly.

The following example illustrates that minimizing MSTDE need not recover the true policy value, even when the critic class contains the exact value function and satisfies the terminal condition.
\begin{example}\label{exm:counterexample}
Consider a model on $[0, T]$, in which the continuous state is constant remains constant, while the regime process is a two-state continuous-time Markov chain with generator
$Q=\left(\begin{array}{cc}
-q & q \\
q & -q
\end{array}\right),  q>0 .$
Let the running rewards be regime dependent,
$$
\tilde{r}^\pi(t, x, 1)=1, \quad \tilde{r}^\pi(t, x, 2)=-1,
$$
and let the terminal reward be $h(x)=0$.
For $\beta \geq 0$, define
$$
\chi(t) := \frac{1-e^{-(\beta+2 q)(T-t)}}{\beta+2 q}.
$$
Then
$$
\chi^{\prime}(t)-(\beta+2 q) \chi(t)+1=0, \quad \chi(T)=0 .
$$
Hence the coupled policy-evaluation equations admit the solution
$$
J(t, x, 1 ; \pi)=\chi(t), \quad J(t, x, 2 ; \pi)=-\chi(t) .
$$
Thus, although the continuous state is constant, the value function differs across the two regimes.
Now consider the one-parameter critic family
$$
J^\theta(t, x, i)=\theta J(t, x, i ; \pi), \quad \theta \in \mathbb{R}.
$$
Since $J(T, x, i ; \pi)=0$, every member of this family satisfies the terminal condition
$J^\theta(T, x, i)=0$,
and the true value function is obtained at
$\theta=1$.
By linearity of the policy-evaluation operator and the fact that $J$ solves the exact PE equations,
$$
F^\theta(t, x, i)=(1-\theta) \tilde{r}^\pi(t, x, i), \quad i=1,2 .
$$
Therefore, $F^\theta=0$ if and only if $\theta=1$.
Thus, the Bellman or PE residual correctly identifies the true value function.

The behavior of MSTDE is different. Since the continuous state is constant, there is no Brownian contribution to the quadratic variation. The two regime values under $J^\theta$ differ by
$$
J^\theta(t, x, 1)-J^\theta(t, x, 2)=2 \theta \chi(t) .
$$
Hence, in either regime, the jump quadratic-variation density is
$$
\Gamma^\theta(t, x, i)=q(2 \theta \chi(t))^2=4 q \theta^2 \chi^2(t), \quad i=1,2 .
$$
It follows that the limiting MSTDE objective is
$$
\mathcal{V}_\beta(\theta)=4 q \theta^2 \int_0^T e^{-2 \beta t} \chi(t)^2 d t.
$$
Since $\chi(t)>0$ for $0 \leq t<T$, the integral is strictly positive. Therefore,
$$
\underset{\theta \in \mathbb{R}}{\arg \min } \mathcal{V}_\beta(\theta)=\{0\} .
$$
This differs from the true value parameter
$\theta=1$.
Thus, although the critic class contains the exact value function, the limiting MSTDE criterion is minimized by the zero critic rather than by the true value function.

The same phenomenon persists for sufficiently fine deterministic time partitions. Let $\mathcal{P}$ be such a partition. At the true value parameter $\theta=1$, $F^1=0 .$
Hence the drift-residual contribution in the preceding mesh-error estimate vanishes, and
$$
\operatorname{MSTDE}_{\mathcal{P}}(1)=\mathcal{V}_\beta(1)=4 q \int_0^T e^{-2 \beta t} \chi(t)^2 d t>0.
$$
At $\theta=0$, the jump quadratic-variation term disappears: $\mathcal{V}_\beta(0)=0$.
The corresponding residual contribution is
$$
D(0)=\int_0^T e^{-2 \beta t} d t.
$$
The mesh-error estimate therefore gives
$$
0 \leq \operatorname{MSTDE}_{\mathcal{P}}(0) \leq|\mathcal{P}| \int_0^T e^{-2 \beta t} d t \leq T|\mathcal{P}| .
$$
Consequently,
$$
\operatorname{MSTDE}_{\mathcal{P}}(0) \rightarrow 0 \quad \text { as }|\mathcal{P}| \rightarrow 0 .
$$
In contrast,
$$
\operatorname{MSTDE}_{\mathcal{P}}(1)=4 q \int_0^T e^{-2 \beta t} \chi^2(t) d t
$$
remains strictly positive. Therefore, for all sufficiently fine partitions,
$$
\operatorname{MSTDE}_{\mathcal{P}}(0)<\operatorname{MSTDE}_{\mathcal{P}}(1) .
$$
\end{example}

\begin{remark}
  The Proposition \ref{pro:zero-jump-energy} and Example \ref{exm:counterexample} highlight an important distinction between the martingale-loss and MSTDE criteria under Markov regime-switching. While the martingale loss measures the weighted approximation error of the value function, the limiting MSTDE criterion measures the predictable quadratic variation of the martingale component, including the additional contribution from regime transitions. Consequently, minimizing MSTDE may favor critic approximations with smaller differences across regimes, even when such approximations do not satisfy the policy-evaluation equation. The counterexample shows that this inconsistency can occur even when the critic class contains the true value function. Therefore, the vanishing of the jump contribution should not be interpreted as evidence of accurate policy evaluation, and minimizing MSTDE alone does not generally guarantee recovery of the true regime-dependent value function.
\end{remark}
\subsection{Martingale orthogonality and continuous-time TD}

The preceding martingale characterization provides a natural basis for continuous-time TD learning. Recall that the exact discounted TD increment on a partition $\mathcal{P}$ has been defined by
$$
\delta_k^\theta=M_{t_{k+1}}^\theta-M_{t_k}^\theta .
$$
To establish the corresponding continuous-time moment conditions, we introduce the undiscounted TD differential associated with $M^\theta$. By the definition of $M^\theta$ and the product rule, we have
\begin{equation*}
d M_t^\theta=e^{-\beta t} \left\{d J^\theta(t, \widetilde{X}_t^\pi, \alpha_t)+\left[\tilde{r}^\pi(t, \widetilde{X}_t^\pi, \alpha_{t-})-\beta J^\theta(t, \widetilde{X}_t^\pi, \alpha_{t-})\right] d t\right\}.
\end{equation*}
This motivates the following definition of the continuous-time TD increment:
\begin{align} \label{eq:partial-delta}
d\delta_t^\theta :=& e^{\beta t}\,dM_t^\theta \notag\\
=& dJ^\theta(t,\widetilde X_t^\pi,\alpha_t) + \Big[ \tilde r^\pi(t,\widetilde X_t^\pi,\alpha_{t-}) - \beta J^\theta(t,\widetilde X_t^\pi,\alpha_{t-}) \Big]\,dt.
\end{align}
Moreover, the semimartingale decomposition established in \eqref{eq:the-semimartingale-decomposition} gives
$$
d M_t^\theta=e^{-\beta t} F^\theta(t, \widetilde{X}_t^\pi, \alpha_{t-}) d t+d \mathcal{N}_t^\theta .
$$
Multiplying both sides by $e^{\beta t}$ and using the definition of $d \delta_t^\theta$, we obtain
\begin{equation} \label{eq:partial-delta-rewritten}
d\delta_t^\theta = F^\theta(t,\widetilde X_t^\pi,\alpha_{t-})\,dt + e^{\beta t}\,d\mathcal N_t^\theta,
\end{equation}
where $F^\theta(t,x,i)$ and $\mathcal{N}^\theta$ are defined in \eqref{eq:F_theta_def}, and \eqref{eq:mathcal{N}-theta}, respectively.
This representation separates the continuous-time TD increment \eqref{eq:partial-delta} into the policy-evaluation residual and a martingale term. In particular, when the critic coincides with the true value function, $F^\theta=0$, and the TD increment has zero expected integral against any suitable predictable test process. This provides the basis for the martingale orthogonality conditions used in continuous-time TD learning.

Let $\xi$ be a $\mathbb G$-predictable test process for which the following integrals are well defined. Multiplying \eqref{eq:partial-delta-rewritten} by $\xi_t$ and integrating over $[0,T]$, we obtain
$$
\int_0^T \xi_t d \delta_t^\theta=\int_0^T \xi_t F^\theta(t, \widetilde{X}_t^\pi, \alpha_{t-}) d t+\int_0^T e^{\beta t} \xi_t d \mathcal{N}_t^\theta .
$$
The stochastic integral $\int_0^T e^{\beta t} \xi_t d \mathcal{N}_t^\theta$
has zero expectation, because $e^{\beta t} \xi_t$ is predictable and satisfies the required integrability condition. Therefore,
$$
\mathbb{E}\int_0^T\xi_t\,d\delta_t^\theta = \mathbb{E}\int_0^T \xi_tF^\theta(t,\widetilde X_t^\pi,\alpha_{t-})\,dt.
$$
This identity is the basic regime-switching martingale orthogonality relation. It shows that {\it the martingale orthogonality condition (or the TD moment condition)}
\begin{equation}\label{eq:the-martingale-moment condition}
\mathbb{E}\int_0^T\xi_t\,d\delta_t^\theta=0
\end{equation}
is equivalent to requiring the Bellman residual to be orthogonal to the chosen test process:
\begin{equation} \label{eq:the-TD-moment-condition}
\mathbb{E} \int_0^T \xi_t F^\theta(t, \widetilde{X}_t^\pi, \alpha_{t-}) d t=0.
\end{equation}
When $J^\theta(t, x, i)=J(t, x, i;\pi)$, the Bellman residual vanishes, i.e., $F^\theta(t, x, i)=0,$
and hence \eqref{eq:the-TD-moment-condition} holds for every admissible predictable test process.
Conversely, if
$\mathbb{E} \int_0^T \xi_t F^\theta(t, \widetilde{X}_t^\pi, \alpha_{t-}) d t=0$
for every bounded predictable test process $\xi$, then
$$
F^\theta(t, \widetilde{X}_t^\pi, \alpha_{t-})=0, \quad d t \otimes d \mathbb{P} \text {-a.e. }
$$
Together with the terminal condition
$J^\theta(T, x, i)=h(x)$,
this recovers the policy-evaluation equation along the state-regime process.

The preceding semimartingale decomposition establishes a direct connection between continuous-time TD learning and the policy-evaluation equation. By exploiting the semimartingale decomposition of the TD increment, the Bellman residual can be expressed through martingale orthogonality conditions that can be evaluated from observed trajectories without explicitly computing the model generator.
In practice, TD algorithms use only a finite number of test processes, so the resulting moment conditions do not necessarily guarantee that the learned critic equals the true value function. Such a guarantee requires the critic class to contain the true value function and the selected moment conditions to identify it uniquely.
This unified framework provides the basis for continuous-time TD, TD with eligibility traces, LSTD, and gradient-TD methods, which differ mainly in their choice of test processes and numerical procedures for solving the resulting moment equations.

\begin{itemize}
    \item \textbf{Regime-Switching CTD(0).} To construct a parameter-based TD estimating equation, we choose a test process that reflects how the critic changes with its parameters. A natural choice is the gradient of the critic with respect to $\theta$, whose components represent the directions of variation within the critic family. We therefore set
    $\xi_t^\theta=\nabla_\theta J^\theta(t, \widetilde{X}_t^\pi, \alpha_{t-})$.
    Substituting this test process into \eqref{eq:the-martingale-moment condition} gives
    \begin{equation}\label{eq:CTD0}
    \mathbb{E}\int_0^T \nabla_\theta J^\theta(t,\widetilde X_t^\pi,\alpha_{t-}) \,d\delta_t^\theta =0.
    \end{equation}
    or, equivalently,
    $$
    \mathbb{E}\int_0^T \nabla_\theta J^\theta(t,\widetilde X_t^\pi,\alpha_{t-}) F^\theta(t,\widetilde X_t^\pi,\alpha_{t-})\,dt = 0.
    $$
    Thus, CTD(0) requires the PE residual $F^\theta$ to be orthogonal, on average, to the directions generated by the critic derivatives. If some parameter $\theta^{*}$ represents the true value function,i.e.,
    $J^{\theta^{*}}(t, x, i)=J(t, x, i ; \pi)$,
    then $F^{\theta^{*}}(t, x, i)=0$,
    and therefore $\theta^{*}$ satisfies the CTD(0) moment equation. The moment equation may nevertheless admit more than one solution when the critic is represented by a general function approximator.
    While CTD(0) uses the current critic gradient as the test process, historical gradient information can also be incorporated through an eligibility trace. Specifically, define the continuous-time {\it eligibility trace} by  % 资格迹
    $$
    z_t^\theta = \int_0^t e^{-\kappa(t-s)} \nabla_\theta J^\theta(s,\widetilde X_s^\pi,\alpha_{s-})\,ds, \qquad \kappa\ge0.
    $$
    Equivalently, $dz_t^\theta$ satisfies
    $$dz_t^\theta = \Big[ \nabla_\theta J^\theta(t,\widetilde X_t^\pi,\alpha_{t-}) - \kappa z_t^\theta \Big]\,dt, \qquad z_0^\theta=0.
    $$
    Since $z^\theta$ is continuous and adapted, it is predictable and can therefore be used as a test process. The corresponding moment condition becomes
    $$  \mathbb{E}\int_0^Tz_t^\theta\,d\delta_t^\theta=0.
    $$
    This gives an exponential-trace extension of CTD(0). Over a discrete interval of length $h_k$, the trace decays by the factor $e^{-\kappa h_k}$.
    Hence CTD(0) and the eligibility-trace formulation are based on the same TD increment $d \delta_t^\theta$; their difference lies only in the choice of test process.
    \item \textbf{Linear Approximation and Regime-Switching CLSTD.} A particularly simple moment equation is obtained when the critic is affine in its parameters. Let
    $$J^\theta(t,x,i) = J^0(t,x,i)+\psi(t,x,i)^\top\theta,$$
    where $J^0$ is a fixed function and
    $J^0(T,x,i)=h(x), \psi(T,x,i)=0$.
    This construction guarantees $J^\theta(T, x, i)=h(x)$ for every $\theta$.
    Let $\xi_t \in \mathbb{R}^{d_\theta}$ be a predictable test process that does not depend on $\theta$. Substituting the affine critic into the TD increment gives
    \begin{equation*}
        d\delta_t^\theta =d\delta_t^0 + \Big[ d(\psi(t,\widetilde X_t^\pi,\alpha_t)^\top) - \beta\psi(t,\widetilde X_t^\pi,\alpha_{t-})^\top\,dt \Big]\theta.
    \end{equation*}
    where
    $$
    d \delta_t^0=d J^0(t, \widetilde{X}_t^\pi, \alpha_t)+\left[\mathcal{R}(t, \widetilde{X}_t^\pi, \alpha_{t-}, \pi)-\beta J^0(t, \widetilde{X}_t^\pi, \alpha_{t-})\right] d t .
    $$
    The moment equation \eqref{eq:the-martingale-moment condition}
    therefore becomes
    \begin{equation*}
        \Bigg[ \mathbb{E}\int_0^T \xi_t\Big\{ \beta\psi(t,\widetilde X_t^\pi,\alpha_{t-})^\top\,dt - d\bigl(\psi(t,\widetilde X_t^\pi,\alpha_t)^\top\bigr) \Big\} \Bigg]\theta = \mathbb{E}\int_0^T\xi_t\,d\delta_t^0.
    \end{equation*}
    Thus the critic parameters are determined by a linear system
    $$A \theta=b,$$
    with
    $$
    A=\mathbb{E} \int_0^T \xi_t\left\{\beta \psi(t, \widetilde{X}_t^\pi, \alpha_{t-})^{\top} d t-d(\psi(t, \widetilde{X}_t^\pi, \alpha_t)^{\top})\right\},
    \quad
    b=\mathbb{E} \int_0^T \xi_t d \delta_t^0.
    $$
    If $A$ is nonsingular, the moment equation \eqref{eq:the-martingale-moment condition} has the unique solution
    $\theta=A^{-1} b$.
    Replacing the expectations by empirical averages over observed trajectories gives a continuous-time least-squares TD implementation. The term $d \delta_t^0$ contains the contribution of the fixed offset $J^0$, which is required when the terminal reward is nonzero. Importantly, both $A$ and $b$ can be constructed from observed critic and feature increments, so the environmental drift, diffusion coefficients, and switching generator do not need to be evaluated explicitly.

    \item \textbf{General Critic and GTD Formulation.} For a general nonlinear critic, define the vector of martingale moments by
    $$
    m(\theta;\pi) = \mathbb{E}\int_0^T\xi_t^\theta\,d\delta_t^\theta.
   $$
   Equivalently,
   $$
   m(\theta;\pi)= \mathbb{E}\int_0^T \xi_t^\theta F^\theta(t,\widetilde X_t^\pi,\alpha_{t-})\,dt.
   $$
   The basic PE condition is therefore
   $m(\theta ; \pi)=0$.
   Instead of solving this vector equation directly, one may minimize its weighted squared norm:
   $$
   L_{\mathrm{GTD}}(\theta;\pi) = \frac{1}{2}m(\theta;\pi)^\top Wm(\theta;\pi), \qquad W\succ0,
   $$
   where $W$ is fixed.
   Its gradient is
   $\nabla_\theta L_{\mathrm{GTD}}(\theta ; \pi)=\mathcal{J}_m(\theta ; \pi)^{\top} W m(\theta ; \pi)$,
   where $\mathcal{J}_m(\theta ; \pi)=\frac{\partial m(\theta ; \pi)}{\partial \theta^{\top}}$
   is the Jacobian of the moment vector. Both the critic and the test process may depend on $\theta$. Accordingly, for each parameter component $\theta_j$,
   \begin{equation*}
        \frac{\partial m(\theta;\pi)}{\partial\theta_j} =\mathbb{E}\int_0^T \frac{\partial\xi_t^\theta}{\partial\theta_j} \,d\delta_t^\theta+ \mathbb{E}\int_0^T\xi_t^\theta \Bigg[ d\left( \frac{\partial J^\theta}{\partial\theta_j} (t,\widetilde X_t^\pi,\alpha_t) \right) - \beta \frac{\partial J^\theta}{\partial\theta_j} (t,\widetilde X_t^\pi,\alpha_{t-})\,dt \Bigg].
    \end{equation*}
    The first term accounts for the dependence of the test process on $\theta$, whereas the second term accounts for the dependence of the TD increment on the critic parameters.
    For the affine critic
    $J^\theta(t, x, i)=J^0(t, x, i)+\psi(t, x, i)^{\top} \theta$
    with a parameter-independent test process $\xi$, the moment vector has the affine form $m(\theta ; \pi)=b-A \theta$,
    up to the sign convention used in defining $A$. Hence,
    $L_{\mathrm{GTD}}(\theta ; \pi)=\frac{1}{2}(b-A \theta)^{\top} W(b-A \theta)$
    is a quadratic objective in $\theta$.
    For a nonlinear critic or a parameter-dependent test process, $m(\theta ; \pi)$ is generally nonlinear, and the resulting GTD objective need not be quadratic.
    The relation among these constructions can therefore be summarized as
    $\mathbb{E} \int_0^T \xi_t^\theta d \delta_t^\theta=0$
    as the common martingale moment condition,
    $\xi_t^\theta=\nabla_\theta J^\theta(t, \widetilde{X}_t^\pi, \alpha_{t-})$
    for CTD(0),
    $\xi_t^\theta=z_t^\theta$
    for the eligibility-trace version,
    $J^\theta(t, x, i)=J^0(t, x, i)+\psi(t, x, i)^{\top} \theta$
    for the linear CLSTD formulation, and
    $L_{\mathrm{GTD}}=\frac{1}{2} m^{\top} W m$
    for the GTD-type formulation.
\end{itemize}

\section{Policy Gradient under regime-switching}
\label{sec:PG}

Let $\Phi \subseteq \mathbb{R}^{d_\phi}$ be open, and consider a family of admissible policy densities $\{\pi^\phi: \phi \in \Phi\}$ with respect to a fixed reference measure $\lambda$. Suppose that the densities are positive and continuously differentiable in $\phi$, with a common support independent of $\phi$. Throughout the following analysis, $b, \sigma, r, h$, and $Q$ are independent of $\phi$ when the action is fixed. We further assume that $J(t,x,i;\pi^\phi)$ is sufficiently smooth in $(t,x,\phi)$ to justify the following differentiation under the policy integral and the policy-evaluation equation.
Motivated by the occupation-measure approach to controlled Markov processes (see, \cite{bhatt1996occupation,kurtz1998existence}), we impose the following condition to connect the relaxed policy formulation with observed trajectories.
\begin{assumption}\label{assum:occupation-matching}
Fix a parameterized policy $\pi^\phi$ and an initial condition $(t, x, i)$. Let $(X, \alpha, a)$ be an admissible controlled realization with predictable actions, and let $(\widetilde{X}^\pi, \alpha)$ be the corresponding relaxed state-regime process under $\pi^\phi$. We assume that, for every bounded Borel measurable function $f$ : $[t, T] \times \mathbb{R}^d \times \mathcal{M} \times \mathcal{A} \rightarrow \mathbb{R}$,
\begin{equation*}
\mathbb{E}_{t, x, i} \int_t^T f(s, X_s, \alpha_{s-}, a_s) d s =\mathbb{E}_{t, x, i}^{\pi^\phi} \int_t^T \int_{\mathcal{A}} f(s, \widetilde{X}_s^{\pi^\phi}, \alpha_{s-}, a) \pi^\phi(d a \mid s, \widetilde{X}_s^{\pi^\phi}, \alpha_{s-}) d s.
\end{equation*}
\end{assumption}
Here, $\mathbb{E}_{t, x, i}^{\pi^\phi}$ denotes expectation for the relaxed state-regime process starting from $(x, i)$ at time $t$.
This condition requires the controlled realization and its relaxed counterpart to have the same expected joint occupation measure on the time-state-regime-action space. It does not require pathwise equality of the two processes. The identity also extends to unbounded measurable test functions whenever the corresponding absolute integrability conditions hold.

\begin{remark}
The Assumption \ref{assum:occupation-matching} connects the relaxed policy formulation with observable controlled trajectories, thereby justifying the trajectory-based policy-gradient representation. A sufficient condition is that, for almost every $s \in[t, T]$,
\begin{equation*}
\mathcal{L}(a_s \mid X_s, \alpha_{s-}) =\pi^\phi(\cdot \mid s, X_s, \alpha_{s-}),\quad
\mathcal{L}(X_s, \alpha_{s-})  =\mathcal{L}(\widetilde{X}_s^{\pi^\phi}, \alpha_{s-}),
\end{equation*}
where the joint state-regime laws are taken under their respective formulations. The desired identity then follows from conditional expectation and Fubini's theorem, under suitable integrability conditions. However, sampling actions from $\pi^\phi$ at discrete decision times does not guarantee Assumption \ref{assum:occupation-matching} between these times. A separate consistency argument is therefore needed for finite-grid implementations as the mesh size tends to zero.
\end{remark}

Differentiating with respect to $\phi$ yields
\begin{equation}\label{eq:the-normalization-identity}
\int_{\mathcal{A}} \nabla_\phi \log \pi^\phi(a \mid t, x, i) \pi^\phi(a \mid t, x, i) \lambda(d a) =\int_{\mathcal{A}} \nabla_\phi \pi^\phi(a \mid t, x, i) \lambda(d a) =0 .
\end{equation}
%Recall that the policy averaged extended generator is given by$$(\tilde{\mathcal{L}}^{\pi^\phi} f)(t, x, i)=\int_{\mathcal{A}}\left(\mathcal{L}^a f\right)(t, x, i) \pi^\phi(a \mid t, x, i) \lambda(d a).$$
For the parameterized policy $\pi^\phi$, the policy-evaluation equation takes the form
\begin{equation}\label{eq:policy-evaluation-equ-partial-phi}
(\widetilde{\mathcal{L}}^{\pi^\phi} J)(t, x, i ; \pi^\phi)+\mathcal{R}(t, x, i, \pi^\phi) -\beta J(t, x, i ; \pi^\phi)=0,
\end{equation}
with terminal condition $J(T, x, i ; \pi^\phi)=h(x)$.
Differentiating \eqref{eq:policy-evaluation-equ-partial-phi} with respect to $\phi$
%and using$\nabla_\phi \pi^\phi(a \mid t, x, i)=\pi^\phi(a \mid t, x, i) \nabla_\phi \log \pi^\phi(a \mid t, x, i)$
, we obtain
\begin{equation}\label{eq:the-auxiliary-PE-equation}
\begin{aligned}
0= & (\widetilde{\mathcal{L}}^{\pi^\phi} \nabla_\phi J)(t, x, i ; \pi^\phi)-\beta \nabla_\phi J(t, x, i ; \pi^\phi) +\int_{\mathcal{A}} \nabla_\phi \log \pi^\phi(a \mid t, x, i)\Big[(\mathcal{L}^a J)(t, x, i ; \pi^\phi) \\
&\quad+r(t, x, i, a)+\gamma p(t, x, i, a, \pi^\phi(\cdot \mid t, x, i))\Big] \pi^\phi(a \mid t, x, i) \lambda(d a) \\
&\quad +\gamma \int_{\mathcal{A}} \nabla_\phi p(t, x, i, a, \pi^\phi(\cdot \mid t, x, i)) \pi^\phi(a \mid t, x, i) \lambda(d a).
\end{aligned}
\end{equation}
Here, $\nabla_\phi p$ denotes the derivative of the regularizer with respect to $\phi$ through its dependence on $\pi^\phi$, while $(t, x, i, a)$ are held fixed.
%The score identity \eqref{eq:the-normalization-identity} implies that any term independent of the action $a$ has zero score-weighted policy average. In particular,$$\int_{\mathcal{A}} \nabla_\phi \log \pi^\phi(a \mid t, x, i)\left[-\beta J(t, x, i ; \pi^\phi)\right] \pi^\phi(a \mid t, x, i) \lambda(d a)=0 .$$Hence, the term $-\beta J(t, x, i ; \pi^\phi)$ can be included in the score-weighted expression in \eqref{eq:the-auxiliary-PE-equation} without changing its value.
Define the auxiliary reward by
\begin{align*}
\check{r}(t, x, i, a ; \phi)&:=\nabla_\phi \log \pi^\phi(a \mid t, x, i)\Big[(\mathcal{L}^a J)(t, x, i ; \pi^\phi)+r(t, x, i, a)+\gamma p(t, x, i, a, \pi^\phi(\cdot \mid t, x, i)) \\
&\quad-\beta J(t, x, i ; \pi^\phi)\Big]
+\gamma \nabla_\phi p(t, x, i, a, \pi^\phi(\cdot \mid t, x, i)) .
\end{align*}
Equation \eqref{eq:the-auxiliary-PE-equation} can then be written as %the auxiliary policy-evaluation equation
\begin{equation}\label{eq:nabla_phi-J}
\left\{\begin{array}{l}
(\widetilde{\mathcal{L}}^{\pi^\phi} \nabla_\phi J)(t, x, i ; \pi^\phi)+\int_{\mathcal{A}} \check{r}(t, x, i, a ; \phi) \pi^\phi(a \mid t, x, i) \lambda(d a)-\beta \nabla_\phi J(t, x, i ; \pi^\phi)=0, \\
\nabla_\phi J(T, x, i ; \pi^\phi)=0 .
\end{array}\right.
\end{equation}
%The terminal condition $\nabla_\phi J(T, x, i ; \pi^\phi)=0$ follows from the independence of $h(x)$ with respect to $\phi$.
To identify the contribution of regime-switching to \eqref{eq:nabla_phi-J}, observe that the terms $\frac{\partial J}{\partial t}(t, x, i ; \pi^\phi)$, $\sum_{j =1}^L q_{i j}J(t, x, j ; \pi^\phi)$,
and $-\beta J(t, x, i ; \pi^\phi)$ are independent of the current action $a$ for fixed $(t, x, i)$. Hence, by the score identity
\eqref{eq:the-normalization-identity},
\begin{equation*}
\int_{\mathcal{A}} \nabla_\phi \log \pi^\phi(a \mid t, x, i)\Big[J_t(t, x, i ; \pi^\phi)+\sum_{j=1}^L q_{i j} J(t, x, j ; \pi^\phi)-\beta J(t, x, i ; \pi^\phi)\Big] \pi^\phi(a \mid t, x, i) \lambda(d a)=0 .
\end{equation*}
Consequently, these action-independent terms vanish from the score-weighted policy average in \eqref{eq:nabla_phi-J}. However, regime-switching remains explicitly accounted for in
$\tilde{\mathcal{L}}^{\pi^\phi} \nabla_\phi J$ through the term $\sum_{j =1}^L q_{i j}J(t, x, j ; \pi^\phi)$.
More generally, for any scalar function $c(t, x, i)$ independent of $a$, define
$$
\check{r}_c(t, x, i, a ; \phi):=\check{r}(t, x, i, a ; \phi)+ c(t, x, i)\nabla_\phi \log \pi^\phi(a \mid t, x, i).
$$
By \eqref{eq:the-normalization-identity},
\begin{equation*}
 \int_{\mathcal{A}} \check{r}_c(t, x, i, a ; \phi) \pi^\phi(a \mid t, x, i) \lambda(d a)=\int_{\mathcal{A}} \check{r}(t, x, i, a ; \phi) \pi^\phi(a \mid t, x, i) \lambda(d a) .
\end{equation*}
Hence, action-independent terms can be added to or removed from the score-weighted factor without changing \eqref{eq:nabla_phi-J}. This equivalence holds after integration with respect to $\pi^\phi(a \mid t, x, i)$, although the corresponding expressions for $\check{r}_c(t, x, i, a ; \phi)$ and $\check{r}(t, x, i, a ; \phi)$ may differ pointwise.

For the subsequent trajectory-based policy-gradient representation, we retain $\check{r}(t, x, i, a ; \phi)$ in its original form involving $\mathcal{L}^a J$. Under the corresponding controlled realization, Itô's formula expresses $(\mathcal{L}^{a_t} J) d t$ in terms of the observed increment $d J(t, X_t, \alpha_t ; \pi^\phi)$ and a martingale increment. Under the required predictability and integrability conditions, the latter has zero expectation when integrated against the policy score. This allows the policy gradient to be represented using observed state-regime trajectories, rewards, and policy scores, without explicitly evaluating $b, \sigma$, or $Q$. In particular, regime transitions are captured by the observed jumps in $d J(t, X_t, \alpha_t ; \pi^\phi)$, rather than being omitted from the policy-gradient representation.

We next derive a probabilistic representation of the policy gradient based on the preceding gradient equation. To justify this representation, we impose the following conditions. Specifically, for every initial condition $(t, x, i)$, we assume that the parameter gradient $\nabla_\phi J(t, x, i ; \pi^\phi)$ and the auxiliary reward $\check{r}(t, x, i, a ; \phi)$ satisfy
\begin{equation*}
\mathbb{E}_{t, x, i}^{\pi^\phi} \bigg[\sup_{t \leq s \leq T}\|\nabla_\phi J(s, \widetilde{X}_s^{\pi^\phi}, \alpha_s ; \pi^\phi)\|+\int_t^T \int_{\mathcal{A}}\|\check{r}(s, \widetilde{X}_s^{\pi^\phi}, \alpha_s, a ; \phi)\| \pi^\phi(a \mid s, \widetilde{X}_s^{\pi^\phi}, \alpha_s) \lambda(d a) d s\bigg]<\infty.
\end{equation*}
%It follows from \eqref{eq:nabla_phi-J} that\begin{align*}(\tilde{\mathcal{L}}^\phi \nabla_\phi J)(s, x, i ; \pi^\phi)-\beta \nabla_\phi J(s, x, i ; \pi^\phi)  =-\int_{\mathcal{A}} \check{r}(s, x, i, a ; \phi) \pi^\phi(a \mid s, x, i) \lambda(d a) .\end{align*}
Let $(\tau_n)_{n \geq 1}$ be a localizing sequence for the local martingale terms, with $\tau_n \uparrow T$ almost surely. Applying the generalized Itô's formula componentwise to $e^{-\beta(s-t)} \nabla_\phi J(s, \widetilde{X}_s^{\pi^\phi}, \alpha_s ; \pi^\phi)$ on $[t, u \wedge \tau_n]$, where $t<u<T$, and using \eqref{eq:nabla_phi-J} yields
\begin{align*}
\nabla_\phi J(t, x, i ; \pi^\phi)=\mathbb{E}_{t, x, i}^{\pi^\phi} & \bigg[e^{-\beta(u \wedge \tau_n-t)} \nabla_\phi J(u \wedge \tau_n, \widetilde{X}_{u \wedge \tau_n}^{\pi^\phi}, \alpha_{u \wedge \tau_n} ; \pi^\phi) \\
& +\int_t^{u \wedge \tau_n} e^{-\beta(s-t)} \int_{\mathcal{A}} \check{r}(s, \widetilde{X}_s^{\pi^\phi}, \alpha_s, a ; \phi) \pi^\phi(a \mid s, \widetilde{X}_s^{\pi^\phi}, \alpha_s) \lambda(d a) d s\bigg] .
\end{align*}
where the stopped local martingale has zero expectation. The use of $\alpha_s$ in the ordinary time integral is equivalent to using $\alpha_{s-}$, since the two coincide for $d s \otimes d \mathbb{P}$-almost every $(s, \omega)$.

We next remove the localization. Since $\beta \geq 0$, the stopped terminal term is bounded by
\begin{equation*}
\left\|e^{-\beta(u \wedge \tau_n-t)} \nabla_\phi J(u \wedge \tau_n, \widetilde{X}_{u \wedge \tau_n}^{\pi^\phi}, \alpha_{u \wedge \tau_n} ; \pi^\phi)\right\| \leq \sup_{t \leq s \leq T}\left\|\nabla_\phi J(s, \widetilde{X}_s^{\pi^\phi}, \alpha_s ; \pi^\phi)\right\| .
\end{equation*}
Moreover, using the triangle inequality and the fact that $\alpha_s=\alpha_{s-}$ for $d s \otimes d \mathbb{P}$-almost every $(s, \omega)$, we have
\begin{align*}
&\left\|\int_t^{u \wedge \tau_n} e^{-\beta(s-t)} \int_{\mathcal{A}} \check{r}(s, \widetilde{X}_s^{\pi^\phi}, \alpha_{s-}, a ; \phi) \pi^\phi(a \mid s, \widetilde{X}_s^{\pi^\phi}, \alpha_{s-}) \lambda(d a) d s\right\| \\
&\quad\leq \int_t^T \int_{\mathcal{A}}\left\|\check{r}(s, \widetilde{X}_s^{\pi^\phi}, \alpha_s, a ; \phi)\right\| \pi^\phi(a \mid s, \widetilde{X}_s^{\pi^\phi}, \alpha_s) \lambda(d a) d s.
\end{align*}
Both upper bounds are integrable by assumption. Therefore, dominated convergence allows $n \rightarrow \infty$, giving
\begin{equation*}
\nabla_\phi J(t, x, i ; \pi^\phi)=\mathbb{E}_{t, x, i}^{\pi^\phi} \int_t^T e^{-\beta(s-t)} \int_{\mathcal{A}} \check{r}(s, \widetilde{X}_s^{\pi^\phi}, \alpha_s, a ; \phi) \pi^\phi(a \mid s, \widetilde{X}_s^{\pi^\phi}, \alpha_s) \lambda(d a) d s.
\end{equation*}
Finally, since the terminal reward is independent of $\phi$,
$$
\nabla_\phi J(T, x, i ; \pi^\phi)=0, \quad(x, i) \in \mathbb{R}^d \times \mathcal{M} .
$$
By the continuity of $\nabla_\phi J(t, x, i ; \pi^\phi)$ up to the terminal time, and the path properties of the relaxed state-regime process,
$$
\nabla_\phi J(u, \widetilde{X}_u^{\pi^\phi}, \alpha_u ; \pi^\phi) \rightarrow 0 \quad \text { a.s. as } u \uparrow T .
$$
The preceding integrability condition again permits dominated convergence. Letting $u \uparrow T$, we obtain
$$
\nabla_\phi J(t, x, i ; \pi^\phi)=\mathbb{E}_{t, x, i}^{\pi^\phi} \int_t^T e^{-\beta(s-t)} \int_{\mathcal{A}} \check{r}(s, \widetilde{X}_s^{\pi^\phi}, \alpha_s, a ; \phi) \pi^\phi(a \mid s, \widetilde{X}_s^{\pi^\phi}, \alpha_s) \lambda(d a) d s.
$$
Thus, the policy gradient can be representation as the discounted policy evaluation of the auxiliary reward $\check{r}(t, x, i, a ; \phi)$ under the current relaxed policy $\pi^\phi$.

To obtain a trajectory-based representation, consider an admissible controlled realization $(X, \alpha, a)$ starting from $(x, i)$ at time $t$, with predictable actions $a_s$ and state-regime characteristics corresponding to the controlled generator $\mathcal{L}^{a_s}$. Suppose that the randomized realization preserves the environmental martingales and switching compensators and satisfies the Assumption \ref{assum:occupation-matching} for the auxiliary-reward integrands. We also assume that the score-weighted stochastic integrals are integrable and have zero expectation.

Under these conditions, Itô's formula gives
$$
d J(s, X_s, \alpha_s ; \pi^\phi)=(\mathcal{L}^{a_s} J)(s, X_s, \alpha_{s-} ; \pi^\phi) d s+d M_s^J,
$$
where $M^J$ denotes the local martingale component of the value process.

The martingale term has zero expectation when multiplied by the predictable policy score and integrated over time. Consequently, the action-dependent generator term in the auxiliary reward can be replaced, in expectation, by the observed value increment. Combining this identity with the Assumption \ref{assum:occupation-matching} gives the following trajectory-based policy-gradient representation.

\begin{theorem}[Trajectory policy-gradient representation under regime-switching]
Let the preceding regularity, integrability, and occupation-matching assumptions hold. Then, for $(t, x, i) \in$ $[0, T) \times \mathbb{R}^d \times \mathcal{M}$, the policy gradient admits the trajectory representation
\begin{align}\label{eq:PG}
\nabla_\phi J(t, x, i ; \pi^\phi)=&\mathbb{E}_{t, x, i}^\phi\bigg[\int_t^T e^{-\beta(s-t)}\Big\{  \nabla_\phi \log \pi^\phi(a_s \mid s, X_s, \alpha_{s-}) \times\big[d J(s, X_s, \alpha_s ; \pi^\phi)\notag\\
&  +(r(s, X_s, \alpha_{s-}, a_s)+\gamma p(s, X_s, \alpha_{s-}, a_s, \pi^\phi(\cdot \mid s, X_s, \alpha_{s-}))\notag\\
&\qquad\qquad-\beta J(s, X_s, \alpha_{s-} ; \pi^\phi)) d s\big] \notag\\
& +\gamma \nabla_\phi p(s, X_s, \alpha_{s-}, a_s, \pi^\phi(\cdot \mid s, X_s, \alpha_{s-})) \Big\}d s\bigg].
\end{align}
Here, $\mathbb{E}_{t, x, i}^\phi$ denotes expectation under the randomized controlled realization $(X, \alpha, a)$.
\end{theorem}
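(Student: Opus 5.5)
The plan is to start from the relaxed probabilistic representation already derived before the statement,
\begin{equation*}
\nabla_\phi J(t, x, i ; \pi^\phi)=\mathbb{E}_{t, x, i}^{\pi^\phi} \int_t^T e^{-\beta(s-t)} \int_{\mathcal{A}} \check{r}(s, \widetilde{X}_s^{\pi^\phi}, \alpha_{s-}, a ; \phi) \pi^\phi(a \mid s, \widetilde{X}_s^{\pi^\phi}, \alpha_{s-}) \lambda(d a) d s,
\end{equation*}
and transport it to the controlled realization $(X,\alpha,a)$ in two moves. First, Assumption \ref{assum:occupation-matching} is applied with the test function $f(s,y,k,a):=e^{-\beta(s-t)}\check{r}(s,y,k,a;\phi)$; this is legitimate under the stated absolute integrability of $\check r$, after truncation and monotone/dominated convergence. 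It turns the relaxed double integral into $\mathbb{E}_{t,x,i}^\phi\int_t^T e^{-\beta(s-t)}\check r(s,X_s,\alpha_{s-},a_s;\phi)\,ds$. Second, the generator term $(\mathcal{L}^{a_s}J)(s,X_s,\alpha_{s-};\pi^\phi)\,ds$ inside $\check r$ is replaced by the observed increment $dJ(s,X_s,\alpha_s;\pi^\phi)$.

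For the second move, write the scalar process $J(s,X_s,\alpha_s;\pi^\phi)$ via the generalized Itô formula under the controlled characteristics. The realization preserves $W$ as a Brownian motion and keeps the compensators $\mathbf 1_{\{\alpha_{s-}=k\}}q_{kj}ds$ of $N_{kj}$, so
\begin{equation*}
dJ(s,X_s,\alpha_s;\pi^\phi)=(\mathcal{L}^{a_s}J)(s,X_s,\alpha_{s-};\pi^\phi)\,ds+dM^J_s,
\end{equation*}
with
\begin{equation*}
dM^J_s=J_x^\top\sigma(s,X_s,\alpha_{s-},a_s)\,dW_s+\sum_{k}\sum_{j\ne k}\mathbf 1_{\{\alpha_{s-}=k\}}[J(s,X_s,j)-J(s,X_s,k)]\,d\widetilde N_{kj}(s).
\end{equation*}
Multiplying by the score $e^{-\beta(s-t)}\nabla_\phi\log\pi^\phi(a_s\mid s,X_s,\alpha_{s-})$ gives an integrand that is predictable, because $a$ is a predictable step process and the pre-jump regime is used. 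Hence $\int e^{-\beta(s-t)}\nabla_\phi\log\pi^\phi\,dM^J_s$ is a local martingale. Using the assumed integrability (localize by $\tau_n\uparrow T$ and pass to the limit by dominated convergence, exactly as in the preceding derivation), its expectation is zero. Substituting $(\mathcal L^{a_s}J)\,ds=dJ-dM^J$ into $\check r$ then yields \eqref{eq:PG} term by term. The reward, regularizer and $-\beta J\,ds$ pieces are carried over unchanged, and $\gamma\nabla_\phi p$ appears with its $ds$. Throughout, the ordinary time integrals may use $\alpha_s$ or $\alpha_{s-}$ interchangeably, since these agree $ds\otimes d\mathbb P$-a.e.

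One bookkeeping point needs care. The score multiplies the whole differential $dJ+(r+\gamma p-\beta J)\,ds$, so the term involving $dJ$ is a stochastic (Stieltjes) integral against a semimartingale, not a $ds$-integral. I would therefore state the result in the form $\mathbb E\int e^{-\beta(s-t)}\nabla_\phi\log\pi^\phi\,[dJ+(\cdots)ds]+\mathbb E\int e^{-\beta(s-t)}\gamma\nabla_\phi p\,ds$, and read the displayed formula in that sense.

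The main obstacle is that the two moves are applied under different measures. Occupation matching only converts $ds$-integrals of functions of $(s,X_s,\alpha_{s-},a_s)$, while the replacement of $\mathcal L^{a_s}J$ by $dJ$ is pathwise. So the order matters: first match occupation measures for the full $\check r$, which is a legitimate bounded/integrable function of $(s,y,k,a)$. Only then, on the controlled side, perform the Itô substitution. The delicate points are the zero expectation of the score-weighted martingale, which needs an $L^2$-type bound such as $\mathbb E\int\|\nabla_\phi\log\pi^\phi\|^2(|\sigma^\top J_x|^2+\sum_{j}q_{kj}|\Delta_{kj}J|^2)ds<\infty$, to be taken from the standing assumption, and predictability at jump times of $\alpha$.
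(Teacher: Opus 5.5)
Your proposal is correct and follows the paper's proof essentially step for step. You first use occupation matching to move the relaxed auxiliary-reward integral onto the controlled realization, then substitute $(\mathcal{L}^{a_s}J)\,ds = dJ - dM^J$ and use the predictable score and its square-integrability to give the score-weighted martingale term zero expectation; your reading of the $dJ$ term in the display as a stochastic integral rather than a $ds$-integral is the right interpretation of the paper's notation.
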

\begin{proof}
By the probabilistic  policy-gradient representation in \eqref{eq:nabla_phi-J} and the Assumption \ref{assum:occupation-matching},
\begin{align*}
\nabla_\phi J(t, x, i ; \pi^\phi)=\mathbb{E}_{t, x, i}^\phi \bigg[\int_t^T e^{-\beta(s-t)} & \Big\{\nabla_\phi \log \pi^\phi(a_s \mid s, X_s, \alpha_{s-})\times[(\mathcal{L}^{a_s} J)(s, X_s, \alpha_{s-} ; \pi^\phi) \\
& \quad +r(s, X_s, \alpha_{s-}, a_s)+\gamma p(s, X_s, \alpha_{s-}, a_s, \pi^\phi(\cdot \mid s, X_s, \alpha_{s-})) \\
&\qquad\qquad-\beta J(s, X_s, \alpha_{s-} ; \pi^\phi)] \\
& \quad+\gamma \nabla_\phi p(s, X_s, \alpha_{s-}, a_s, \pi^\phi(\cdot \mid s, X_s, \alpha_{s-}))\Big\} d s \bigg].
\end{align*}
From the generalized Itô formula of the value process,
$$
(\mathcal{L}^{a_s} J)(s, X_s, \alpha_{s-} ; \pi^\phi) d s=d J(s, X_s, \alpha_s ; \pi^\phi)-d M_s^J,
$$
where $M^J$ contains both the Brownian and compensated regime-switching martingale components. Consequently,
\begin{align*}
&\nabla_\phi \log \pi^\phi(a_s \mid s, X_s, \alpha_{s-})(\mathcal{L}^{a_s} J)(s, X_s, \alpha_{s-} ; \pi^\phi) d s \\
&\quad=\nabla_\phi \log \pi^\phi(a_s \mid s, X_s, \alpha_{s-}) d J(s, X_s, \alpha_s ; \pi^\phi) -\nabla_\phi \log \pi^\phi(a_s \mid s, X_s, \alpha_{s-}) d M_s^J .
\end{align*}
Since $\nabla_\phi \log \pi^\phi(a_s \mid s, X_s, \alpha_{s-})$ satisfies the corresponding square-integrability condition, we have
$$
\mathbb{E}_{t, x, i}^\phi \int_t^T e^{-\beta(s-t)} \nabla_\phi \log \pi^\phi(a_s \mid s, X_s, \alpha_{s-}) d M_s^J=0.
$$
Hence,
\begin{align*}
&\mathbb{E}_{t, x, i}^\phi \int_t^T e^{-\beta(s-t)} \nabla_\phi \log \pi^\phi(a_s \mid s, X_s, \alpha_{s-})(\mathcal{L}^{a_s} J)(s, X_s, \alpha_{s-} ; \pi^\phi) d s \\
&\quad=\mathbb{E}_{t, x, i}^\phi \int_t^T e^{-\beta(s-t)} \nabla_\phi \log \pi^\phi(a_s \mid s, X_s, \alpha_{s-}) d J(s, X_s, \alpha_s ; \pi^\phi).
\end{align*}
Combining this identity with the Assumption \ref{assum:occupation-matching} yields the desired trajectory policy-gradient representation \eqref{eq:PG}.
\end{proof}
The first term in the policy-gradient representation \eqref{eq:PG} is a stochastic integral with respect to the value process $J(s, X_s, \alpha_s ; \pi^\phi)$, with predictable integrand
$e^{-\beta(s-t)} \nabla_\phi \log \pi^\phi(a_s \mid s, X_s, \alpha_{s-})$,
whereas the remaining terms are ordinary time integrals (all integrals of the form $\int_0^T(\cdots) d t$ ). \textbf{Thus, once the current policy value has been obtained from policy evaluation, the representation can be evaluated using observed value increments, rewards, and quantities determined by the policy and the regularizer; it does not require explicit evaluation of the model coefficients $b, \sigma$, or the regime generator $Q$.}

The absence of an explicit regime-switching term in \eqref{eq:PG} does not mean that regime-switching has been ignored. For each fixed $(s, x, i)$, the term
$\sum_{j=1}^L q_{i j} J(s, x, j ; \pi^\phi)$
is independent of the action $a$.
Hence, by the score identity \eqref{eq:the-normalization-identity}, its contribution to the score-weighted policy average vanishes:
\begin{equation}\label{eq:score-weighted-policy-average}
\begin{aligned}
\int_{\mathcal{A}} & \nabla_\phi \log \pi^\phi(a \mid s, x, i)\times \sum_{j=1}^L q_{i j} J(s, x, j ; \pi^\phi) \pi^\phi(a \mid s, x, i) \lambda(d a) \\
& =\sum_{j=1}^L q_{i j} J(s, x, j ; \pi^\phi) \int_{\mathcal{A}} \nabla_\phi \log \pi^\phi(a \mid s, x, i) \pi^\phi(a \mid s, x, i) \lambda(d a)=0 .
\end{aligned}
\end{equation}
Meanwhile, the observed increase $d J(s, X_s, \alpha_s ; \pi^\phi)$ already captures changes in the value process caused by regime transitions. Therefore, the absence of an explicit regime-switching term in the trajectory representation follows from the score identity \eqref{eq:the-normalization-identity} and the Itô decomposition, rather than from neglecting the regime-switching dynamics. The action-independent terms $J_t(t, x, i ; \pi^\phi)$ and $-\beta J(t, x, i ; \pi^\phi)$ can be removed similarly from the score-weighted policy average \eqref{eq:score-weighted-policy-average}. We retain $-\beta J(t, x, i ; \pi^\phi)$ in \eqref{eq:PG} to preserve the continuous-time TD increment \eqref{eq:partial-delta} used in the subsequent actor-critic formulation.

Finally, this representation \eqref{eq:PG} applies to a continuous-time randomized realization satisfying the Assumption \ref{assum:occupation-matching}. In a finite-grid implementation, actions are sampled according to
$$
a_k \sim \pi^\phi(\cdot \mid t_k, X_{t_k}, \alpha_{t_k}), \quad a_s=a_k, \quad s \in[t_k, t_{k+1}),
$$
Because the state and regime processes continue to evolve between decision times while the action remains fixed, this implementation does not automatically satisfy Assumption \ref{assum:occupation-matching}. Its consistency with the continuous-time representation therefore requires convergence of the relevant occupation integrals and discrete approximations of the stochastic and ordinary time integrals as the mesh size tends to zero.

\section{Actor-Critic Algorithms}\label{sec:Actor-Critic-Algorithms}

We now construct the actor-critic algorithms from the policy-evaluation and policy-gradient results established in the preceding sections. For a given parameterized policy $\pi^\phi$, the critic approximates the corresponding policy value, while the actor uses the resulting value approximation to construct a policy-gradient estimate. Throughout this section, the actor parameter $\phi$ and the critic parameter $\theta$ are treated as separate variables.

\subsection{Offline regime-switching Actor-Critic}

\textit{Continuous-Time Representation.} We first establish the continuous-time offline actor-critic algorithms. Define {\it the regularized instantaneous reward} by
\begin{equation}\label{eq:the-regularized-instantaneous-reward}
\ell^\phi(t,x,i,a) = r(t,x,i,a) + \gamma p( t,x,i,a, \pi^\phi(\cdot\mid t,x,i) ).
\end{equation}
Let $J^\phi(t, x, i):=J(t, x, i ; \pi^\phi)$
denote the value function of the current policy $\pi^\phi$.
We assume that the critic class is {\it realizable}: for each fixed $\phi$, there exists $\theta^*\in\Theta$ such that
$$
J^{\theta^*}(t,x,i)=J^\phi(t, x,i), \quad (t, x, i) \in[0, T] \times \mathbb{R}^d \times \mathcal{M}.
$$
As in the preceding policy-gradient theorem, we distinguish the relaxed state-regime process $(\widetilde{X}^{\pi^\phi}, \alpha)$,  from the sampled-action controlled realization $(X, \alpha, a)$. The latter satisfies the Assumption \ref{assum:occupation-matching} established above, which provides the connection between policy-averaged expectations and expectations over sampled trajectories.

\textbf{Critic update.} We derive a trajectory-based critic estimating equation by connecting the policy-evaluation equation to the TD increment observed along sampled-action trajectories. The main idea is to establish a zero-moment condition for the exact critic, even though the sampled-action TD increment is not necessarily a martingale.

Recall that the critic class is realizable, so that $J^{\theta^*}(t, x, i)=J^\phi(t, x, i)$, the policy-evaluation equation \eqref{eq:policy-evaluation-equ-partial-phi} implies
\begin{equation}\label{eq:PE-for-J-theta(phi)}
0=\int_{\mathcal{A}}  \left[(\mathcal{L}^a J^{\theta^*})(t, x, i)+\ell^\phi(t, x, i, a)-\beta J^{\theta^*}(t, x, i)\right] \pi^\phi(a \mid t, x, i) \lambda(d a).
\end{equation}
As established in Section \ref{sec:PE},  this equation, implies that
\begin{equation*}
d J^\phi(t, \widetilde{X}_t^{\pi^\phi}, \alpha_t) +\left[\mathcal{R}(t, \widetilde{X}_t^{\pi^\phi}, \alpha_{t-}, \pi^\phi)-\beta J^\phi(t, \widetilde{X}_t^{\pi^\phi}, \alpha_{t-})\right] d t
\end{equation*}
is a martingale increment under the relaxed state-regime process $(\widetilde{X}_t^{\pi^\phi}, \alpha)$.
However, the corresponding property does not necessarily hold along sampled-action trajectories, as shown by the following result.

\begin{lemma}
Let $(X, \alpha, a)$ be the sampled-action controlled realization under the current policy $\pi^\phi$. Define the continuous-time TD increment along this trajectory by
\begin{equation}\label{eq:the-TD-increment-along-sampled-trajectory}
d\delta_t^{\theta,\phi} := dJ^\theta(t,X_t,\alpha_t) + \left[ \ell^\phi(t,X_t,\alpha_{t-},a_t) - \beta J^\theta(t,X_t,\alpha_{t-}) \right]dt.
\end{equation}
Then $d \delta_t^{\theta, \phi}$ admits the decomposition
\begin{equation}\label{eq:continuous-time-semimartingale-increment}
d \delta_t^{\theta, \phi}=\left[(\mathcal{L}^{a_t} J^\theta)(t, X_t, \alpha_{t-})+\ell^\phi(t, X_t, \alpha_{t-}, a_t)-\beta J^\theta(t, X_t, \alpha_{t-})\right] d t+d \mathcal{N}_t^{\theta, \phi},
\end{equation}
where $\mathcal{N}^{\theta, \phi}$ is the local martingale term arising from the Brownian motion and compensated regime transitions.
\end{lemma}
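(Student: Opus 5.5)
The plan is to apply the generalized Itô formula for regime-switching diffusions to $J^\theta(t,X_t,\alpha_t)$ along the sampled-action realization. Then add the finite-variation reward and discount terms in \eqref{eq:the-TD-increment-along-sampled-trajectory}. The computation parallels the one used for \eqref{eq:dM_theta_Ito} and in the proof of the martingale-problem proposition. The only difference is that the controlled process $X$ evolves under the actual action $a_t$, not under the relaxed coefficients.

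First, write the dynamics of the sampled realization as in \eqref{eq:dX_s_pi}:
\[
dX_t=b(t,X_t,\alpha_{t-},a_t)\,dt+\sigma(t,X_t,\alpha_{t-},a_t)\,dW_t .
\]
Here $a$ is the $\mathbb F'$-predictable step process. Replacing $\alpha_t$ by $\alpha_{t-}$ is harmless in the $dt$ and $dW_t$ integrals. By construction of the product space \eqref{eq:product-probability-space}, $W$ remains a Brownian motion and $\widetilde N_{kj}$ remain local martingales with compensators $\mathbf 1_{\{\alpha_{t-}=k\}}q_{kj}\,dt$ under $\mathbb F'$. Next, decompose the regime dependence as
\[
J^\theta(t,x,\alpha_t)=\sum_k \mathbf 1_{\{\alpha_t=k\}}J^\theta(t,x,k).
\]
Apply the classical Itô formula to each $J^\theta(\cdot,\cdot,k)\in C^{1,2}$ between jump times. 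At each jump time, add the jump $J^\theta(t,X_t,j)-J^\theta(t,X_t,k)$; this is legitimate because $X$ is continuous, so the state does not jump. The sum of jumps equals
\[
\sum_{k}\sum_{j\ne k}\int \mathbf 1_{\{\alpha_{s-}=k\}}\bigl[J^\theta(s,X_s,j)-J^\theta(s,X_s,k)\bigr]\,dN_{kj}(s).
\]
Compensating $dN_{kj}$ produces the drift $\sum_j q_{kj}J^\theta(s,X_s,j)$ on $\{\alpha_{s-}=k\}$, using $q_{kk}=-\sum_{j\ne k}q_{kj}$.

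Collecting terms gives
\[
dJ^\theta(t,X_t,\alpha_t)=(\mathcal L^{a_t}J^\theta)(t,X_t,\alpha_{t-})\,dt+d\mathcal N^{\theta,\phi}_t,
\]
with $\mathcal L^{a}$ the controlled generator \eqref{eq:the-controlled-extended-generator}. The local martingale is
\[
d\mathcal N^{\theta,\phi}_t=J_x^\theta(t,X_t,\alpha_{t-})^\top\sigma(t,X_t,\alpha_{t-},a_t)\,dW_t+\sum_k\sum_{j\ne k}\mathbf 1_{\{\alpha_{t-}=k\}}\bigl[J^\theta(t,X_t,j)-J^\theta(t,X_t,k)\bigr]\,d\widetilde N_{kj}(t).
\]
Adding $[\ell^\phi(t,X_t,\alpha_{t-},a_t)-\beta J^\theta(t,X_t,\alpha_{t-})]\,dt$ to both sides yields \eqref{eq:continuous-time-semimartingale-increment}.

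The main obstacle is verifying that both stochastic integrals are genuine local martingales under the enlarged filtration. For the Brownian integral, I would note that the integrand is predictable, because $a_t$ is a predictable step process and $X_t,\alpha_{t-}$ are left-continuous or continuous. It is also locally square integrable, by continuity of $J_x^\theta$ and $\sigma$ together with a localization $\tau_n=\inf\{s:|X_s|\ge n\}$. For the jump part, the integrands are predictable by construction, since they are evaluated at $\alpha_{t-}$ and the continuous $X_t$. The number of jumps is also a.s. finite, as shown in the proof of Lemma \ref{lem:growth-condition}. Hence the integrals against $\widetilde N_{kj}$ are local martingales.
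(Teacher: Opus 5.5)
Your proposal is correct and is the same argument the paper gives in one line: apply the generalized It\^o formula for regime-switching diffusions to $J^\theta(t,X_t,\alpha_t)$ under the sampled-action coefficients $b(t,X_t,\alpha_{t-},a_t)$ and $\sigma(t,X_t,\alpha_{t-},a_t)$, compensate the $dN_{kj}$ jumps to obtain $\mathcal{L}^{a_t}J^\theta$, and add the reward and discount terms; your write-up also supplies the predictability and filtration details the paper leaves implicit. One small correction: the integrals against $\widetilde N_{kj}$ are local martingales because their predictable integrands are left-continuous and hence locally bounded (e.g.\ on $[0,\tau_n]$), not because the number of jumps is a.s.\ finite, which would not suffice on its own; likewise, if $J^\theta_x$ is only continuous on $[0,T)$, the Brownian integral needs the extra time localization $\tau_n\wedge(T-1/n)$.
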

The result follows directly by applying the generalized Itô formula to $J^\theta(t, X_t, \alpha_t)$ and substituting the resulting decomposition into the definition of $d \delta_t^{\theta, \phi}$.
\begin{remark}
An important distinction from the relaxed formulation is that $d \delta_t^{\theta^*, \phi}$ need not be a martingale increment along the sampled-action trajectory, even when the critic is exact. Indeed, \eqref{eq:PE-for-J-theta(phi)} requires the PE residual to vanish only after averaging over $\pi^\phi$, whereas the drift in \eqref{eq:continuous-time-semimartingale-increment} depends on the sampled action $a_t$. In particular, the identity
$$
\int_{\mathcal{A}}\left[(\mathcal{L}^a J^{\theta^*})(t, x, i)+\ell^\phi(t, x, i, a)-\beta J^{\theta^*}(t, x, i)\right] \pi^\phi(a \mid t, x, i) \lambda(d a)=0,
$$
and does not imply that
\begin{equation*}
(\mathcal{L}^a J^{\theta^*})(t, x, i)+\ell^\phi(t, x, i, a)-\beta J^{\theta^*}(t, x, i)=0,
\end{equation*}
for every $a \in \mathcal{A}$. Thus, the finite-variation term in \eqref{eq:continuous-time-semimartingale-increment} may remain nonzero along an individual sampled-action trajectory. \textbf{The critic moment condition must therefore be established through policy averaging, rather than by directly treating the sampled-action TD increment as a martingale.} This distinction motivates the use of policy averaging to establish the critic moment condition.
\end{remark}
\begin{proposition}\label{pro:Trajectory-based-critic moment-condition}
Suppose that Assumption \ref{assum:occupation-matching} holds and that the corresponding predictability and integrability conditions are satisfied. If $J^{\theta^*}(\cdot,\cdot, \cdot)=J^\phi(\cdot,\cdot, \cdot)$, then the sampled-action TD increment satisfies the zero-moment condition when tested against the critic parameter gradient.
\end{proposition}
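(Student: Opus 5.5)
The claim to establish is
\[
\mathbb{E}_{t,x,i}^\phi\int_t^T \nabla_\theta J^{\theta}(s,X_s,\alpha_{s-})\big|_{\theta=\theta^*}\, d\delta_s^{\theta^*,\phi}=0,
\]
where $d\delta^{\theta^*,\phi}$ is the sampled-action TD increment \eqref{eq:the-TD-increment-along-sampled-trajectory}. Write $\xi_s:=\nabla_\theta J^{\theta}(s,X_s,\alpha_{s-})|_{\theta=\theta^*}$. I will follow the same two-stage pattern as in the proof of the trajectory policy-gradient representation. First remove the martingale part by predictability. Then average the remaining drift over actions by occupation matching.

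\textbf{Step 1 (martingale removal).} Insert the decomposition \eqref{eq:continuous-time-semimartingale-increment} of $d\delta_s^{\theta^*,\phi}$ from the preceding lemma. The test process $\xi_s$ is a function of $(s,X_s,\alpha_{s-})$. Since $X$ is continuous and $\alpha_{s-}$ is left-continuous, $\xi$ is $\mathbb{F}'$-predictable. Under the stated square-integrability of $\xi$ and of the integrands of $\mathcal{N}^{\theta^*,\phi}$, the integral $\int_t^\cdot \xi_s\,d\mathcal{N}_s^{\theta^*,\phi}$ is a true martingale, so its expectation vanishes. This integral contains a Brownian part with integrand $\xi_s J^{\theta^*}_x{}^{\top}\sigma(s,X_s,\alpha_{s-},a_s)$ and switching parts against $d\widetilde N_{kj}$. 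Here I use that the randomized realization keeps $W$ a Brownian motion and keeps the compensators $\mathbf{1}_{\{\alpha_{s-}=k\}}q_{kj}\,ds$ under $\mathbb{F}'$, as arranged by the product-space construction \eqref{eq:product-probability-space}. If needed, I will localize with stopping times $\tau_n\uparrow T$ and pass to the limit by dominated convergence, as in the derivation of \eqref{eq:nabla_phi-J}. This leaves
\[
\mathbb{E}^\phi_{t,x,i}\int_t^T \xi_s\, g(s,X_s,\alpha_{s-},a_s)\,ds,\qquad g(s,x,k,a):=(\mathcal{L}^aJ^{\theta^*})(s,x,k)+\ell^\phi(s,x,k,a)-\beta J^{\theta^*}(s,x,k).
\]

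\textbf{Step 2 (policy averaging).} Apply Assumption \ref{assum:occupation-matching} to the test function $f(s,x,k,a)=\nabla_\theta J^{\theta^*}(s,x,k)\,g(s,x,k,a)$, componentwise. This converts the expression into
\[
\mathbb{E}^{\pi^\phi}_{t,x,i}\int_t^T \nabla_\theta J^{\theta^*}(s,\widetilde X_s,\alpha_{s-})\int_{\mathcal{A}} g(s,\widetilde X_s,\alpha_{s-},a)\,\pi^\phi(da\mid s,\widetilde X_s,\alpha_{s-})\,ds.
\]
The factor $\nabla_\theta J^{\theta^*}$ does not depend on $a$, so it comes out of the action integral. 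By \eqref{eq:PE-for-J-theta(phi)}, which holds because realizability gives $J^{\theta^*}=J^\phi$, the inner action integral is identically zero. Hence the whole expectation vanishes.

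\textbf{Main obstacle.} The delicate point is that $f$ is unbounded. Assumption \ref{assum:occupation-matching} is stated for bounded Borel $f$, so I need its extension to absolutely integrable $f$, which the paper notes holds. Concretely, I must check that $\mathbb{E}\int|\xi_s|\,|g|\,ds<\infty$ under the relaxed law. I plan to do this by truncation: apply the assumption to $f\mathbf{1}_{\{|f|\le n\}}$ and take monotone or dominated limits. For the bounds I will use the polynomial growth in Definition \ref{def:admissible}(iv), the linear growth of $b$ and $\sigma$, and the moment bounds of Lemma \ref{lem:growth-condition}, together with the standing integrability assumptions on $J^{\theta^*}$ and its derivatives. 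Everything else is an exact cancellation.
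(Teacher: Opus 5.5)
Your proposal is correct and follows essentially the paper's argument. The paper likewise removes the stochastic integral against $d\mathcal{N}^{\theta^*,\phi}$ using predictability of the action-independent weight $\nabla_\theta J^{\theta^*}(t,X_t,\alpha_{t-})$, and disposes of the drift by combining Assumption~\ref{assum:occupation-matching} with the policy-averaged PE identity \eqref{eq:PE-for-J-theta(phi)} after pulling the gradient out of the action integral; the differences are cosmetic (the paper transports the gradient-weighted PE identity to the trajectory before removing the martingale term), and your truncation step makes explicit the extension to unbounded test functions that the paper only asserts.
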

\begin{proof}
We first establish the zero-moment condition for the finite-variation component in \eqref{eq:continuous-time-semimartingale-increment}. Since $\nabla_\theta J^{\theta^*}(t, x, i)$ is independent of the current action \(a\), multiplying \eqref{eq:PE-for-J-theta(phi)} by this gradient gives
$$
0=\int_{\mathcal{A}} \nabla_\theta J^{\theta^*}(t, x, i)\left[(\mathcal{L}^a J^{\theta^*})(t, x, i)+\ell^\phi(t, x, i, a)-\beta J^{\theta^*}(t, x, i)\right] \pi^\phi(a \mid t, x, i) \lambda(d a) .
$$
The Assumption \ref{assum:occupation-matching} allows us to express the expected time integral of this policy-averaged identity in terms of the sampled-action trajectory. Hence,
\begin{equation}\label{eq:the-finite-variation}
0=\mathbb{E}_{0, x, i}^\phi \int_0^T \nabla_\theta J^{\theta^*}(t, X_t, \alpha_{t-})\left[(\mathcal{L}^{a_t} J^{\theta^*})(t, X_t, \alpha_{t-})+\ell^\phi(t, X_t, \alpha_{t-}, a_t)-\beta J^{\theta^*}(t, X_t, \alpha_{t-})\right] d t.
\end{equation}
Under the assumed filtration and integrability conditions, $\nabla_\theta J^{\theta^*}(t, X_t, \alpha_{t-})$ is predictable, and therefore
\begin{equation}\label{eq:the-martingale-part}
\mathbb{E}_{0, x, i}^\phi \int_0^T \nabla_\theta J^{\theta^*}(t, X_t, \alpha_{t-}) d \mathcal{N}_t^{\theta, \phi}=0.
\end{equation}
Combining \eqref{eq:continuous-time-semimartingale-increment}-\eqref{eq:the-martingale-part}, we obtain the trajectory-based critic moment condition
\begin{equation}\label{eq:trajectory-based-critic moment-condition}
\mathbb{E}_{0, x, i}^\phi\left[\int_0^T \nabla_\theta J^{\theta^*}(t, X_t, \alpha_{t-}) d \delta_t^{\theta^*, \phi}\right]=0.
\end{equation}
This establishes the desired trajectory-based critic moment condition. In particular, although $d \delta_t^{\theta^*, \phi}$ is generally not a martingale increment, its integral against the critic parameter gradient has zero expectation. This follows from Assumption \ref{assum:occupation-matching} for the finite-variation component and the zero-expectation property of the stochastic integral.
\end{proof}
Proposition \ref{pro:Trajectory-based-critic moment-condition} establishes the connection between the theoretical PE equation and the moment condition evaluated along sampled-action trajectories. Since the exact parameter $\theta^*$ is unknown, we extend \eqref{eq:trajectory-based-critic moment-condition} to a general critic parameter $\theta$ and consider the estimating equation
\begin{equation}\label{eq:estimating-moment-equation}
\mathbb{E}_{0, x, i}^\phi\left[\int_0^T \nabla_\theta J^\theta(t, X_t, \alpha_{t-}) d \delta_t^{\theta, \phi}\right]=0.
\end{equation}
When the critic class is realizable, the exact parameter $\theta^*$ satisfies \eqref{eq:estimating-moment-equation}. Nevertheless, the moment equation may admit multiple solutions unless the critic class and the chosen test process satisfy suitable identification conditions. If the critic class is misspecified, the equation instead characterizes a moment-matching approximation, provided that a solution exists.

Accordingly, \eqref{eq:estimating-moment-equation} motivates the following trajectory-based critic estimating direction:
$$
\Delta \theta=\int_0^T \nabla_\theta J^\theta\left(t, X_t, \alpha_{t-}\right) d \delta_t^{\theta, \phi}.
$$
Its expectation coincides with the left-hand side of \eqref{eq:estimating-moment-equation}
The critic can therefore be updated through stochastic approximation to seek a root of \eqref{eq:estimating-moment-equation}.

The resulting estimating direction depends only on observed value increments, regularized rewards, and the gradients of the critic parameter, without requiring explicit evaluation of the environmental drift, diffusion coefficients, or regime-switching generator. In particular, the effects of regime transitions are incorporated through the observed jumps in $J^\theta(t, X_t, \alpha_t)$. In general, $\Delta \theta$ is a stochastic estimating direction rather than the gradient of a specified scalar loss function.

\textbf{Actor update.} While the critic seeks a solution to the policy-evaluation moment equation, the actor aims to estimate the gradient of the policy objective. We first establish a trajectory-based representation of this gradient using the exact critic and then examine the effect of critic approximation. For a parameterized critic $J^\theta(t, x, i)$, define the continuous-time actor increment by
$$
d\mathcal G_t^{\theta,\phi} := e^{-\beta t} \left[ \nabla_\phi \log\pi^\phi(a_t\mid t,X_t,\alpha_{t-}) \,d\delta_t^{\theta,\phi} + \gamma \nabla_\phi p( t,X_t,\alpha_{t-},a_t, \pi^\phi(\cdot\mid t,X_t,\alpha_{t-}))dt \right].
$$
The following proposition establishes the connection between this increment and the policy gradient.
\begin{proposition}\label{pro:Trajectory-based-policy-gradient-representation}
Suppose that Assumption \ref{assum:occupation-matching} and the conditions of the preceding policy-gradient theorem hold. Assume further that the sampled-action realization preserves the required martingale properties and that the policy score is predictable and satisfies the corresponding stochastic-integral integrability conditions. If the critic is exact, namely,
$$
J^{\theta^*}(t, x, i)=J^\phi(t, x, i)=J(t, x, i ; \pi^\phi) .
$$
then
\begin{equation*}
\mathbb{E}_{0, x, i}^\phi\left[\int_0^T d \mathcal{G}_t^{\theta^*, \phi}\right]=\nabla_\phi J\left(0, x, i ; \pi^\phi\right)
\end{equation*}

\end{proposition}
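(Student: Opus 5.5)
The plan is to reduce the claim to the trajectory policy-gradient representation \eqref{eq:PG} at the initial time $t=0$. First I substitute the exact critic $J^{\theta^*}=J^\phi$ into the definition of the actor increment. Using \eqref{eq:the-TD-increment-along-sampled-trajectory} with $\theta=\theta^*$, this gives
\begin{align*}
d\mathcal G_t^{\theta^*,\phi}
= e^{-\beta t}\Big\{ & \nabla_\phi\log\pi^\phi(a_t\mid t,X_t,\alpha_{t-})\Big[dJ(t,X_t,\alpha_t;\pi^\phi)
+\big(\ell^\phi(t,X_t,\alpha_{t-},a_t)-\beta J(t,X_t,\alpha_{t-};\pi^\phi)\big)dt\Big]\\
& +\gamma\nabla_\phi p(t,X_t,\alpha_{t-},a_t,\pi^\phi(\cdot\mid t,X_t,\alpha_{t-}))\,dt\Big\}.
\end{align*}
By \eqref{eq:the-regularized-instantaneous-reward}, $\ell^\phi=r+\gamma p$. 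Integrating over $[0,T]$ and taking $\mathbb{E}^\phi_{0,x,i}$ gives exactly the right-hand side of \eqref{eq:PG} with $t=0$, where $e^{-\beta(s-t)}=e^{-\beta s}$. So the statement follows from the trajectory policy-gradient theorem once its hypotheses are verified. These are the occupation matching of Assumption \ref{assum:occupation-matching}, predictability and square-integrability of the score, and preservation of the environmental martingales. All of them are assumed in the proposition.

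To keep the argument self-contained, I will also re-derive the identity in three steps.

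\textbf{Step 1.} Apply the generalized Itô formula to $J^\phi(t,X_t,\alpha_t)$ along the sampled realization. This splits the stochastic part into a finite-variation term $(\mathcal L^{a_t}J^\phi)(t,X_t,\alpha_{t-})\,dt$ and a local martingale $dM^J_t$. The latter contains both the Brownian integral and the compensated transition integrals $d\widetilde N_{kj}$, evaluated at the pre-jump regime.

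\textbf{Step 2.} Show that
$$
\mathbb{E}\int_0^T e^{-\beta t}\nabla_\phi\log\pi^\phi(a_t\mid t,X_t,\alpha_{t-})\,dM^J_t=0.
$$
The integrand is predictable because $a$ is a predictable step process and $\alpha_{t-}$ is used. Under the stated integrability conditions, the integral is a true martingale rather than merely a local one, which gives zero expectation. What remains is the expected time integral of $e^{-\beta t}$ times the auxiliary reward $\check r(t,X_t,\alpha_{t-},a_t;\phi)$.

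\textbf{Step 3.} The integrand in Step 2 is a measurable function of $(t,X_t,\alpha_{t-},a_t)$. Assumption \ref{assum:occupation-matching}, extended to integrable unbounded functions, therefore converts this expectation into
$$
\mathbb{E}^{\pi^\phi}_{0,x,i}\int_0^T e^{-\beta t}\int_{\mathcal A}\check r\,\pi^\phi\,\lambda(da)\,dt .
$$
The probabilistic representation derived from \eqref{eq:nabla_phi-J}, after localization and dominated convergence, identifies this quantity with $\nabla_\phi J(0,x,i;\pi^\phi)$.

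The main obstacle is Step 2 together with the extension in Step 3. The score $\nabla_\phi\log\pi^\phi$ may be unbounded, for example Gaussian scores grow linearly in $a$. So the zero-mean property of the score-weighted Brownian and jump integrals needs a square-integrability bound of the form
$$
\mathbb{E}\int_0^T\|\nabla_\phi\log\pi^\phi\|^2\,\Gamma\,dt<\infty,
$$
where $\Gamma$ is the quadratic-variation density of $J^\phi$. This density includes the regime-switching term $\sum_j q_{kj}(J^\phi(\cdot,j)-J^\phi(\cdot,k))^2$. I would invoke the integrability hypotheses explicitly here and, if needed, localize with stopping times $\tau_n\uparrow T$, passing to the limit by dominated convergence exactly as in the derivation following \eqref{eq:nabla_phi-J}.
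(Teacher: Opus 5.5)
Your proposal is correct and follows essentially the same route as the paper: an Itô decomposition of the exact-critic TD increment, zero expectation of the score-weighted Brownian and compensated-jump integrals, identification of the remaining drift (together with the regularizer derivative) with $\check r$, and conversion via Assumption~\ref{assum:occupation-matching} to the relaxed probabilistic representation of $\nabla_\phi J(0,x,i;\pi^\phi)$. Your opening reduction to \eqref{eq:PG} at $t=0$ is a legitimate shortcut, since the proposition assumes that theorem's hypotheses and that theorem is proved by this same argument.
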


\begin{proof}
By the generalized Itô formula, the corresponding TD increment satisfies
$$
d \delta_t^{\theta^*, \phi}=\bigg[(\mathcal{L}^{a_t} J^\phi)(t, X_t, \alpha_{t-})+\ell^\phi(t, X_t, \alpha_{t-}, a_t)
-\beta J^\phi(t, X_t, \alpha_{t-})\bigg] d t+d \mathcal{N}_t^{J^\phi}.
$$
where \(\mathcal N^{\theta^*,\phi}\) contains the Brownian and compensated regime-switching martingale components. Since the policy score is predictable and satisfies the required integrability conditions, the stochastic integral has zero expectation:
$$
\mathbb{E}_{0, x, i}^\phi\left[\int_0^T e^{-\beta t} \nabla_\phi \log \pi^\phi\left(a_t \mid t, X_t, \alpha_{t-}\right) d \mathcal{N}_t^{\theta^*}\right]=0.
$$
Consequently, substituting the above decomposition into the definition of $d \mathcal{G}_t^{\theta^*, \phi}$ and taking expectations eliminates the martingale term. The remaining score-weighted drift, together with the regularizer derivative, is precisely the auxiliary reward $\check{r}$ defined in the preceding policy-gradient theorem. Hence,
$$
\mathbb{E}_{0, x, i}^\phi\left[\int_0^T d \mathcal{G}_t^{\theta^*, \phi}\right]=\mathbb{E}_{0, x, i}^\phi \int_0^T e^{-\beta t} \check{r}(t, X_t, \alpha_{t-}, a_t ; \phi) d t .
$$
By Assumption \ref{assum:occupation-matching}, the expected time integral can be expressed in terms of the relaxed state-regime process:
\begin{align*}
& \mathbb{E}_{0, x, i}^\phi \int_0^T e^{-\beta t} \check{r}(t, X_t, \alpha_{t-}, a_t ; \phi) d t \\
& =\mathbb{E}_{0, x, i}^{\pi^\phi} \int_0^T e^{-\beta t} \int_{\mathcal{A}} \check{r}(t, \widetilde{X}_t^{\pi^\phi}, \alpha_{t-}, a ; \phi) \pi^\phi(a \mid t, \widetilde{X}_t^{\pi^\phi}, \alpha_{t-}) \lambda(d a) d t \\
& =\nabla_\phi J(0, x, i ; \pi^\phi),
\end{align*}
where the last equality follows from the preceding policy-gradient theorem.
\end{proof}
The proposition shows that, when the critic is exact, the policy gradient can be estimated directly from observed state-regime trajectories, regularized rewards, and policy scores. In particular, the regime-switching generator does not need to be evaluated explicitly: its contribution is already incorporated through the observed regime transitions in $d J^\phi(t, X_t, \alpha_t)$.

Motivated by this representation, we define the trajectory-based actor direction for a general critic parameter $\theta$ by
$$
\Delta \phi=\int_0^T d \mathcal{G}_t^{\theta, \phi}.
$$
Under the reward-maximization convention, this direction can be used to construct a stochastic policy-gradient ascent update. The preceding proposition guarantees that
$$
\mathbb{E}_{0, x, i}^\phi[\Delta \phi]=\nabla_\phi J(0, x, i ; \pi^\phi), \quad \theta=\theta^* .
$$
For an approximate critic, however, the expected actor direction may differ from the true policy gradient. The following result characterizes this discrepancy.

\begin{corollary}[Effect of critic approximation on the actor direction]
Under the conditions of the preceding proposition, let $J^\theta$ be a sufficiently regular critic approximation. Then the bias of the trajectory-based actor direction is given by
\begin{align*}
 \mathbb{E}_{0, x, i}^\phi[\Delta \phi]-\nabla_\phi J(0, x, i ; \pi^\phi)
& =\mathbb{E}_{0, x, i}^\phi \int_0^T e^{-\beta t} \nabla_\phi \log \pi^\phi(a_t \mid t, X_t, \alpha_{t-}) \\
& \quad \times\left[d(J^\theta-J^\phi)(t, X_t, \alpha_t)-\beta(J^\theta-J^\phi)(t, X_t, \alpha_{t-}) d t\right].
\end{align*}
\end{corollary}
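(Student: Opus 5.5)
The plan is to write $\Delta\phi$ for a general critic as the exact-critic direction plus a remainder, and then apply Proposition \ref{pro:Trajectory-based-policy-gradient-representation}. The definition of $d\delta_t^{\theta,\phi}$ in \eqref{eq:the-TD-increment-along-sampled-trajectory} is affine in the critic. The regularized reward $\ell^\phi(t,X_t,\alpha_{t-},a_t)\,dt$ does not involve $\theta$. The remaining terms $dJ^\theta(t,X_t,\alpha_t)-\beta J^\theta(t,X_t,\alpha_{t-})\,dt$ are linear in $J^\theta$. Hence, pathwise,
\begin{equation*}
d\delta_t^{\theta,\phi}-d\delta_t^{\theta^*,\phi}
= d(J^\theta-J^\phi)(t,X_t,\alpha_t)-\beta (J^\theta-J^\phi)(t,X_t,\alpha_{t-})\,dt,
\end{equation*}
where $J^{\theta^*}=J^\phi$. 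The regularizer-derivative term $\gamma\nabla_\phi p\,dt$ in $d\mathcal G_t^{\theta,\phi}$ is also independent of $\theta$. Consequently $d\mathcal G_t^{\theta,\phi}-d\mathcal G_t^{\theta^*,\phi}$ equals $e^{-\beta t}\nabla_\phi\log\pi^\phi(a_t\mid t,X_t,\alpha_{t-})$ times the displayed increment.

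Next I would integrate over $[0,T]$ and take expectations under $\mathbb{E}^\phi_{0,x,i}$. Linearity of the expectation and of the stochastic integral with respect to the semimartingale $J^\theta-J^\phi$ then gives
\begin{equation*}
\mathbb{E}^\phi_{0,x,i}[\Delta\phi]=\mathbb{E}^\phi_{0,x,i}\Big[\int_0^T d\mathcal G_t^{\theta^*,\phi}\Big]+\mathbb{E}^\phi_{0,x,i}\int_0^T e^{-\beta t}\nabla_\phi\log\pi^\phi(a_t\mid t,X_t,\alpha_{t-})\big[d(J^\theta-J^\phi)-\beta(J^\theta-J^\phi)\,dt\big].
\end{equation*}
Proposition \ref{pro:Trajectory-based-policy-gradient-representation} identifies the first term as $\nabla_\phi J(0,x,i;\pi^\phi)$. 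Subtracting it yields the claimed bias formula.

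The main obstacle is justifying that the expectation can be split. The integrals $\int e^{-\beta t}\nabla_\phi\log\pi^\phi\, dJ^\theta$ and $\int e^{-\beta t}\nabla_\phi\log\pi^\phi\, dJ^\phi$ must each be integrable, and not merely their difference. To handle this, I would apply the generalized Itô formula to $J^\theta$ and $J^\phi$ along $(X,\alpha,a)$, as in \eqref{eq:continuous-time-semimartingale-increment}. This writes each integral as a score-weighted drift integral plus a score-weighted stochastic integral against the Brownian and compensated switching martingales. The drift parts are integrable by the \emph{sufficiently regular critic} hypothesis. For $J^\phi$, the drift part is integrable by the integrability conditions of the preceding policy-gradient theorem. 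The martingale parts are square-integrable, and hence integrable, because the score is predictable and satisfies the stochastic-integral integrability conditions already assumed in Proposition \ref{pro:Trajectory-based-policy-gradient-representation}. With these facts, splitting the expectation is legitimate, and the corollary follows.

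The same decomposition also gives an optional equivalent form of the bias. The martingale part of $J^\theta-J^\phi$ has zero score-weighted expectation. So the bias can be rewritten as $\mathbb{E}\int e^{-\beta t}\nabla_\phi\log\pi^\phi\,(\mathcal L^{a_t}-\beta)(J^\theta-J^\phi)\,dt$. This form shows that the bias vanishes when $J^\theta=J^\phi$.
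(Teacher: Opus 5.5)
Your proposal is correct and follows essentially the same route as the paper: the TD increment is affine in the critic, so the regularizer terms cancel and the difference of actor increments is the score-weighted TD increment of $J^\theta-J^\phi$; integrating, taking expectations, and invoking Proposition \ref{pro:Trajectory-based-policy-gradient-representation} then gives the bias formula. Your extra justification for splitting the expectation, which the paper leaves implicit, is a reasonable supplement, but the integrability of the score-weighted martingale part of $J^\theta$ should be attributed to the ``sufficiently regular critic'' hypothesis, since the assumptions of the proposition concern only $J^\phi$.
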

\begin{proof}
Subtracting the actor increment associated with the exact critic $J^\phi$ from that associated with $J^\theta$, the regularizer derivative terms cancel. The difference between the two TD increments is
\begin{equation*}
d \delta_t^{\theta, \phi}-d \delta_t^{\theta^*, \phi}= d(J^\theta-J^\phi)(t, X_t, \alpha_t) -\beta(J^\theta-J^\phi)(t, X_t, \alpha_{t-}) d t .
\end{equation*}
Integrating the resulting difference between the actor increments, taking expectations, and applying the preceding proposition yields the stated identity.
\end{proof}
This corollary shows that the bias of the actor direction is determined by the contribution of the critic approximation error to the score-weighted TD moment. In particular, the bias vanishes when this contribution has zero expectation, which holds whenever $J^\theta(\cdot, \cdot, \cdot) = J^\phi(\cdot, \cdot, \cdot)$. Thus, an approximate critic may introduce bias into the actor update, although an exact critic is sufficient rather than necessary for obtaining an unbiased policy-gradient direction.

The distinction between the critic and actor can now be summarized through their respective estimating equations. At the exact critic, the policy-evaluation condition gives
$$
\mathbb{E}_{0, x, i}^\phi\left[\int_0^T \nabla_\theta J^{\theta^*}\left(t, X_t, \alpha_{t-}\right) d \delta_t^{\theta^*, \phi}\right]=0,
$$
whereas the actor satisfies
$$
\mathbb{E}_{0, x, i}^\phi\left[\int_0^T d \mathcal{G}_t^{\theta^*, \phi}\right]=\nabla_\phi J(0, x, i ; \pi^\phi) .
$$
Hence, the critic seeks a solution to the policy-evaluation moment equation, while the actor estimates the gradient of the policy objective. Unlike the critic direction, the expected actor direction is generally nonzero and vanishes at an interior stationary policy $\phi^*$ satisfying
$$
\nabla_\phi J(0, x, i ; \pi^{\phi^*})=0 .
$$
Finally, these results characterize continuous-time actor-critic directions under the stated sampled-action realization and occupation-matching conditions. Their finite-grid implementation requires a separate consistency analysis. In particular, sampling an action at each decision time and holding it fixed over the subsequent interval does not automatically preserve the continuous-time PE and PG identities. The corresponding discrete actor-critic updates are therefore connected to the above representations through the occupation and time-discretization consistency results established in the subsequent analysis.

\textit{Finite-Grid Approximation.} To implement the preceding continuous-time actor-critic framework, we approximate the state-regime-action trajectories on a finite time grid and construct discrete counterparts of the critic and actor estimating directions. The main objective is to establish the consistency of these discrete directions with the continuous-time PE moment condition and policy-gradient representation as the mesh size tends to zero. Consider a time partition
$$
\mathcal{P}=\{0=t_0<t_1<\cdots<t_N=T\}, \qquad h_k=t_{k+1}-t_k,
$$
with mesh size $|\mathcal{P}|:=\max _{0 \leq k<N} h_k$.
At each decision time $t_k$, the state-regime pair $(X_{t_k},\alpha_{t_k})$ is observed before an action is sampled. Let $\mathcal{H}_k$ denote the information available immediately before this sampling. The action is generated according to
$$
a_k\mid\mathcal{H}_k \sim \pi^\phi(\cdot\mid t_k,X_{t_k},\alpha_{t_k}),
$$
and is held fixed over the interval $(t_k, t_{k+1}]$. Accordingly, the sample-and-hold action process is defined by
\begin{equation}\label{eq:the-action-process}
a_t=a_k, \quad t \in(t_k, t_{k+1}].
\end{equation}
We assume that the sampled actions are included in the filtration at their respective decision times and that the action randomization preserves the Brownian martingale property and the prescribed compensators of the regime transitions. Under this construction, the action process in \eqref{eq:the-action-process} is predictable in the enlarged filtration.

The sample-and-hold scheme differs from continuous-time policy randomization because actions are updated only at the decision times. In particular, a state change or regime transition within $(t_k,t_{k+1}]$ does not trigger a new action. Consequently, the finite-grid trajectory need not satisfy the continuous-time occupation-matching condition exactly. This distinction will be taken into account in the consistency analysis below.

Throughout this subsection, $\mathbb{E}_{0, x, i}^{\phi, \mathcal{P}}$ denotes expectation under the finite-grid sampled-action dynamics starting from $(x, i)$. The policy parameters are held fixed when analyzing the consistency of the finite-grid approximation.

We first construct the discrete TD increment. Let $\widehat{\ell}_k$ be an observed estimate of the regularized instantaneous reward satisfying
$$
\mathbb{E}^{\phi, \mathcal{P}} \left[ \widehat{\ell}_k \mid \mathcal{H}_k,a_k \right] = r(t_k,X_{t_k},\alpha_{t_k},a_k) + \gamma p( t_k,X_{t_k},\alpha_{t_k},a_k, \pi^\phi(\cdot\mid t_k,X_{t_k},\alpha_{t_k}) ).
$$
Thus, $\hat{\ell}_k$ is a conditionally unbiased estimate of $\ell^\phi(t_k, X_{t_k}, \alpha_{t_k}, a_k)$.

Integrating the continuous-time sampled-action TD increment \eqref{eq:the-TD-increment-along-sampled-trajectory} over $[t_k, t_{k+1}]$ yields
\begin{align}\label{eq: first-orde-approximation-for-k-interval}
\delta_{\left[t_k, t_{k+1}\right]}^{\theta, \phi} :=& J^\theta(t_{k+1},X_{t_{k+1}},\alpha_{t_{k+1}}) - J^\theta(t_k,X_{t_k},\alpha_{t_k})\notag\\
&\quad+\int_{t_k}^{t_{k+1}}[\ell^\phi(t, X_t, \alpha_{t-}, a_t)-\beta J^\theta(t, X_t, \alpha_{t-})] d t.
\end{align}

Since the value-function increment in \eqref{eq: first-orde-approximation-for-k-interval} can be evaluated directly from consecutive observations, we retain this increment and approximate only the ordinary time integral using its left-endpoint value. This yields
\begin{equation}\label{eq:the-discrete-TD-increment}
\delta_{k}^{\theta, \phi} := J^\theta(t_{k+1},X_{t_{k+1}},\alpha_{t_{k+1}}) - J^\theta(t_k,X_{t_k},\alpha_{t_k})+h_k\left[\hat{\ell}_k-\beta J^\theta(t_k, X_{t_k}, \alpha_{t_k})\right] .
\end{equation}
The approximation in \eqref{eq:the-discrete-TD-increment} is applied only to the ordinary time integral; the observed value-function increment is retained without further approximation. In particular, the effects of regime transitions are incorporated through the change from $\alpha_{t_k}$ to $\alpha_{t_{k+1}}$. Hence, no additional switching-generator correction is required in the discrete TD increment.

We next construct the corresponding discrete actor increment. Approximating the continuous-time score-weighted TD integral by its left-endpoint value and using \eqref{eq:the-discrete-TD-increment}, define
\begin{equation}\label{eq:finite-grid-policy-gradient-increment}
\mathcal{G}_k^{\theta,\phi} := e^{-\beta t_k} \Bigg[\nabla_\phi \log \pi^\phi (a_k\mid t_k,X_{t_k},\alpha_{t_k}) \,\delta_k^{\theta,\phi} + \gamma h_k \nabla_\phi p\left( t_k,X_{t_k},\alpha_{t_k},a_k, \pi^\phi(\cdot\mid t_k,X_{t_k},\alpha_{t_k}) \right) \Bigg].
\end{equation}
The finite-grid critic and actor directions are given by
\begin{equation}\label{eq:finite-grid-approximations-of-PE-and-pG-increment}
\Delta\theta_{\mathcal{P}} = \sum_{k=0}^{N-1} \nabla_\theta J^\theta(t_k,X_{t_k},\alpha_{t_k}) \,\delta_k^{\theta,\phi}, \quad \Delta\phi_{\mathcal{P}} = \sum_{k=0}^{N-1} \mathcal{G}_k^{\theta,\phi}.
\end{equation}
These directions provide finite-grid approximations of the continuous-time quantities
\begin{equation*}
\int_0^T \nabla_\theta J^\theta(t, X_t, \alpha_{t-}) d \delta_t^{\theta, \phi} \quad
\text{ and } \quad
\int_0^T d \mathcal{G}_t^{\theta, \phi},
\end{equation*}
respectively. Both are constructed using the same discrete TD increment $\delta_k^{\theta, \phi}$ increment \eqref{eq:the-discrete-TD-increment}, but they serve different purposes: $\Delta\theta_{\mathcal{P}}$ is used to solve the PE moment equation, whereas $\Delta\phi_{\mathcal{P}}$ is used to estimate the policy gradient.

For $M$ independently simulated trajectories, the corresponding empirical estimating directions are obtained by averaging \eqref{eq:finite-grid-approximations-of-PE-and-pG-increment} across trajectories:
\begin{equation}\label{eq:empirical-estimate-directions}
\widehat{\Delta \theta}_{M, \mathcal{P}}=\frac{1}{M} \sum_{n=1}^M \Delta \theta_{\mathcal{P}}^{(n)}, \quad \widehat{\Delta \phi}_{M, \mathcal{P}}=\frac{1}{M} \sum_{n=1}^M \Delta \phi_{\mathcal{P}}^{(n)} .
\end{equation}
Thus, time discretization and Monte Carlo sampling introduce two distinct sources of approximation error. The following analysis focuses first on the time-discretization error in the expected directions.

We now examine how \eqref{eq:the-discrete-TD-increment} approximates the continuous-time TD decomposition. The key step is to relate the conditional expectation of the discrete TD increment to the PE residual evaluated at the beginning of each time interval.

For notational convenience, define the action-dependent PE residual by
\begin{equation}\label{eq:the-action-dependent-PE-residual}
H^{\theta, \phi}(t, x, i, a):= (\mathcal{L}^a J^\theta)(t, x, i) +\ell^\phi(t, x, i, a)-\beta J^\theta(t, x, i) .
\end{equation}
This quantity is the drift coefficient in the sampled-action TD semimartingale decomposition established in \eqref{eq:continuous-time-semimartingale-increment}. In particular, for the exact critic $J^{\theta^*}=J^\phi$, the PE equation \eqref{eq:PE-for-J-theta(phi)} implies
\begin{equation}\label{eq:averaging-PE-equation}
\int_{\mathcal{A}} H^{\theta^*, \phi}(t, x, i, a) \pi^\phi(d a \mid t, x, i)=0 .
\end{equation}
Notice that \eqref{eq:averaging-PE-equation} holds after averaging over the current policy and does not require $H^{\theta^*,\phi}(t,x,i,a)$ to vanish for each individual action. This distinction remains essential in the finite-grid analysis.

On each interval $(t_k,t_{k+1}]$, the state process evolves according to the controlled dynamics
\begin{equation}\label{eq:sample-and-hold-dynamic}
dX_t=b(t,X_t,\alpha_{t-},a_k),dt+\sigma(t,X_t,\alpha_{t-},a_k)dW_t,
\end{equation}
where the sampled action $a_k$ remains fixed while the state and regime continue to evolve.
Applying the generalized Itô formula to $J^\theta(t,X_t,\alpha_t)$ over $[t_k,t_{k+1}]$ and and taking conditional expectations given $(\mathcal{H}_k,a_k)$, we obtain
\begin{align*}
&\mathbb{E}^{\phi, \mathcal{P}}\left[J^\theta(t_{k+1}, X_{t_{k+1}}, \alpha_{t_{k+1}})-J^\theta(t_k, X_{t_k}, \alpha_{t_k}) \mid \mathcal{H}_k, a_k\right]\\
&\quad=\mathbb{E}^{\phi, \mathcal{P}}\left[\int_{t_k}^{t_{k+1}}(\mathcal{L}^{a_k} J^\theta)(t, X_t, \alpha_{t-}) d t \mid \mathcal{H}_k, a_k\right] .
\end{align*}
Here the conditional expectation of the Brownian and compensated regime-switching martingale increments is zero. Combining this identity with the conditional unbiasedness of $\widehat{\ell}_k$, we obtain the local consistency relation
\begin{equation}\label{eq:the-local-consistency-relation}
\mathbb{E}^{\phi, \mathcal{P}}\left[\delta_k^{\theta, \phi} \mid \mathcal{H}_k, a_k\right]=h_k H^{\theta, \phi}\left(t_k, X_{t_k}, \alpha_{t_k}, a_k\right)+\varepsilon_{k, \mathcal{P}}^{\theta, \phi},
\end{equation}
where $\varepsilon_{k,\mathcal{P}}^{\theta,\phi}$ denotes the local discretization remainder.
For the exact sample-and-hold dynamics defined in \eqref{eq:sample-and-hold-dynamic}, this remainder can be expressed as
\begin{equation*}
\varepsilon_{k, \mathcal{P}}^{\theta, \phi}=\mathbb{E}^{\phi, \mathcal{P}}\left[\int_{t_k}^{t_{k+1}} \left\{(\mathcal{L}^{a_k} J^\theta)(t, X_t, \alpha_{t-})-(\mathcal{L}^{a_k} J^\theta)(t_k, X_{t_k}, \alpha_{t_k})\right\} d t \mid \mathcal{H}_k, a_k\right]
\end{equation*}
If the state-regime trajectory is generated by a numerical approximation of the controlled dynamics, the remainder in \eqref{eq:the-local-consistency-relation} also includes the corresponding conditional weak approximation error.

Equation \eqref{eq:the-local-consistency-relation} provides the key connection between discrete TD learning and continuous-time PE. It shows that the conditional expectation of the discrete TD increment is approximated by the local PE residual multiplied by the time step, with the remaining discrepancy represented by $\varepsilon_{k,\mathcal{P}}^{\theta,\phi}$.
In particular, this connection is established without discarding the regime-switching contribution. The operator $\mathcal{L}^{a_k}$ in \eqref{eq:averaging-PE-equation} contains the switching term
$\sum_{j=1}^Lq_{ij}J^\theta(t,x,j)$,
whose effect is captured in the conditional expectation of the observed value-function increment. The generator is therefore used in the consistency analysis, although it does not need to be explicitly evaluated in the TD algorithm.

The local relation \eqref{eq:the-local-consistency-relation} allows us to derive the limiting properties of the discrete critic and actor directions. %For the critic, the relevant condition is the vanishing policy-averaged PE residual in \eqref{eq:averaging-PE-equation}. For the actor, the score-weighted residual must converge to the auxiliary reward appearing in the preceding policy-gradient theorem. These two arguments are formalized in the following proposition.
\begin{proposition}[Consistency of finite-grid Actor-Critic directions]
Fix an admissible policy $\pi^\phi$ and suppose that the critic class is realizable, with $J^{\theta^*}=J^\phi$. Let the finite-grid trajectories be generated by \eqref{eq:the-action-process}, and suppose that the following conditions hold as $|\mathcal{P}|\to0$.
\begin{enumerate}
    \item The sampled-action dynamics and reward estimates satisfy the local consistency relation \eqref{eq:the-local-consistency-relation}, and the underlying filtration preserves the required environmental martingales and switching compensators.

    \item The accumulated discretization remainders are negligible when weighted by the critic parameter gradient and policy score. More precisely,
    \begin{equation}\label{eq:corvingence-of-critic parameter-gradient-and-policy}
         \sum_{k=0}^{N-1} \mathbb{E}^{\phi, \mathcal{P}}\left[\left(\|\nabla_\theta J^{\theta^*}(t_k, X_{t_k}, \alpha_{t_k})\|+e^{-\beta t_k}\|\nabla_\phi \log \pi^\phi(a_k \mid t_k, X_{t_k}, \alpha_{t_k})\|\right)|\varepsilon_{k, \mathcal{P}}^{\theta^*, \phi}|\right] \rightarrow 0 .
    \end{equation}

    \item The sample-and-hold trajectories satisfy the occupation consistency required for the auxiliary reward $\check{r}$. In particular,
    \begin{align}\label{eq:convergence-of auxiliary}
        & \mathbb{E}_{0, x, i}^{\phi, \mathcal{P}} \sum_{k=0}^{N-1} h_k e^{-\beta t_k} \int_{\mathcal{A}} \check{r}\left(t_k, X_{t_k}, \alpha_{t_k}, a ; \phi\right) \pi^\phi\left(d a \mid t_k, X_{t_k}, \alpha_{t_k}\right) \notag\\
        & \quad \rightarrow \mathbb{E}_{0, x, i}^{\pi^\phi} \int_0^T e^{-\beta t} \int_{\mathcal{A}} \check{r}(t, \widetilde{X}_t^{\pi^\phi}, \alpha_{t-}, a ; \phi) \pi^\phi(d a \mid t, \widetilde{X}_t^{\pi^\phi}, \alpha_{t-}) d t.
    \end{align}
    This convergence is understood under the required regularity, moment bounds, and uniform integrability conditions for the corresponding occupation integrals.
    \end{enumerate}
     Then the finite-grid critic and actor directions defined in \eqref{eq:finite-grid-approximations-of-PE-and-pG-increment} satisfy
\begin{equation}\label{eq:convergence-of-finite-grid-critic-actor-directions}
\lim _{|\mathcal{P}| \rightarrow 0} \mathbb{E}_{0, x, i}^{\phi, \mathcal{P}}\left[\Delta \theta_{\mathcal{P}}\right]=0, \quad
\lim _{|\mathcal{P}| \rightarrow 0} \mathbb{E}_{0, x, i}^{\phi, \mathcal{P}}\left[\Delta \phi_{\mathcal{P}}\right] =\nabla_\phi J\left(0, x, i ; \pi^\phi\right) .
\end{equation}
\end{proposition}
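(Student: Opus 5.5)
The plan is to treat the critic and actor directions separately. In each case the expectation of the finite-grid sum will be reduced, via the tower property, to a sum of conditional expectations given $(\mathcal{H}_k,a_k)$. The local consistency relation \eqref{eq:the-local-consistency-relation} will then be inserted, and the two remaining pieces handled by \eqref{eq:averaging-PE-equation} and by \eqref{eq:convergence-of auxiliary}, respectively. Throughout we fix $\theta=\theta^*$ with $J^{\theta^*}=J^\phi$.

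\textbf{Critic direction.} The weight $\nabla_\theta J^{\theta^*}(t_k,X_{t_k},\alpha_{t_k})$ is $\mathcal{H}_k$-measurable. Conditioning first on $(\mathcal{H}_k,a_k)$ gives
$$\mathbb{E}^{\phi,\mathcal{P}}[\Delta\theta_{\mathcal{P}}]=\sum_k \mathbb{E}^{\phi,\mathcal{P}}\big[\nabla_\theta J^{\theta^*}(t_k,X_{t_k},\alpha_{t_k})\big(h_kH^{\theta^*,\phi}(t_k,X_{t_k},\alpha_{t_k},a_k)+\varepsilon^{\theta^*,\phi}_{k,\mathcal{P}}\big)\big].$$
Next we condition the main term on $\mathcal{H}_k$. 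Since $a_k\mid\mathcal{H}_k\sim\pi^\phi(\cdot\mid t_k,X_{t_k},\alpha_{t_k})$, the inner conditional expectation is $h_k\int_{\mathcal A}H^{\theta^*,\phi}(t_k,X_{t_k},\alpha_{t_k},a)\pi^\phi(da\mid\cdot)$, and this vanishes identically by \eqref{eq:averaging-PE-equation}. What remains is bounded in norm by $\sum_k\mathbb{E}[\|\nabla_\theta J^{\theta^*}\|\,|\varepsilon_k|]$, which tends to zero by \eqref{eq:corvingence-of-critic parameter-gradient-and-policy}. Integrability of the products, so that Fubini and the tower property apply, is part of the standing moment assumptions.

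\textbf{Actor direction.} Here the weight is $e^{-\beta t_k}\nabla_\phi\log\pi^\phi(a_k\mid\cdot)$. It depends on $a_k$ but is $(\mathcal{H}_k,a_k)$-measurable, so the same first conditioning gives
$$\mathbb{E}[\mathcal{G}_k]=\mathbb{E}\big[e^{-\beta t_k}h_k\{\nabla_\phi\log\pi^\phi(a_k\mid\cdot)H^{\theta^*,\phi}(t_k,\cdot,a_k)+\gamma\nabla_\phi p(t_k,\cdot,a_k,\pi^\phi)\}\big]+\mathbb{E}\big[e^{-\beta t_k}\nabla_\phi\log\pi^\phi\,\varepsilon_k\big].$$
The curly bracket is exactly $\check r(t_k,X_{t_k},\alpha_{t_k},a_k;\phi)$, because $H^{\theta^*,\phi}$ contains the $-\beta J^\phi$ term and $\ell^\phi=r+\gamma p$. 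Conditioning on $\mathcal{H}_k$ and averaging $a_k$ under $\pi^\phi$ turns the main sum into the left side of \eqref{eq:convergence-of auxiliary}. This converges to the relaxed occupation integral of $\check r$, which equals $\nabla_\phi J(0,x,i;\pi^\phi)$ by the probabilistic policy-gradient representation obtained before the trajectory theorem (evaluated at $t=0$, using $\alpha_{t-}=\alpha_t$ for $dt\otimes d\mathbb{P}$-a.e.). The remainder sum vanishes by the score part of \eqref{eq:corvingence-of-critic parameter-gradient-and-policy}.

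The computations above are routine once the hypotheses are granted. The main point needing care is the conditioning order in the actor term. We must use \eqref{eq:the-local-consistency-relation} conditional on $(\mathcal{H}_k,a_k)$ and not only on $\mathcal{H}_k$, because the score is action-dependent. Only afterwards do we average over $a_k$. Because of this order, the critic term vanishes exactly, while the actor term does not cancel and instead reproduces $\check r$. One must also make sure that the $\hat\ell_k$ unbiasedness is used conditionally on $(\mathcal{H}_k,a_k)$, which is already encoded in \eqref{eq:the-local-consistency-relation}. No further hard analysis is needed, since the genuine difficulty, occupation consistency of sample-and-hold, has been assumed in condition 3.
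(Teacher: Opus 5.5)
Your proposal is correct and follows essentially the same route as the paper's proof. You condition on $(\mathcal{H}_k,a_k)$ to insert the local consistency relation, then average over $a_k$, so that the critic's leading term vanishes by the policy-averaged PE equation and the actor's leading term becomes the policy-averaged auxiliary reward $\check r$; condition 2 disposes of the remainders, and condition 3 together with the probabilistic policy-gradient representation identifies the actor limit as $\nabla_\phi J(0,x,i;\pi^\phi)$.
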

\begin{proof}
 We establish the two limits separately.

 \textbf{Step 1. Consistency of the critic direction.} By the definition of $\Delta\theta_{\mathcal{P}}$ in \eqref{eq:finite-grid-approximations-of-PE-and-pG-increment}, the tower property of conditional expectation, and the local consistency relation \eqref{eq:the-local-consistency-relation}, we have
\begin{align*}
\mathbb{E}_{0, x, i}^{\phi, \mathcal{P}}\left[\Delta \theta_{\mathcal{P}}\right] & =\sum_{k=0}^{N-1} \mathbb{E}_{0, x, i}^{\phi, \mathcal{P}}\left[\nabla_\theta J^{\theta^*}\left(t_k, X_{t_k}, \alpha_{t_k}\right) \delta_k^{\theta^*, \phi}\right] \\
& =\sum_{k=0}^{N-1} \mathbb{E}_{0, x, i}^{\phi, \mathcal{P}}\left[\nabla_\theta J^{\theta^*}(t_k, X_{t_k}, \alpha_{t_k})\left\{h_k H^{\theta^*, \phi}(t_k, X_{t_k}, \alpha_{t_k}, a_k)+\varepsilon_{k, \mathcal{P}}^{\theta^*, \phi}\right\}\right] .
\end{align*}
Since the critic parameter gradient is measurable with respect to $\mathcal{H}_k$ and does not depend on the newly sampled action $a_k$, we can average the leading term over the conditional distribution of $a_k$. Using \eqref{eq:averaging-PE-equation}, this gives
\begin{align*}
& \mathbb{E}_{0, x, i}^{\phi, \mathcal{P}}\left[\nabla_\theta J^{\theta^*}(t_k, X_{t_k}, \alpha_{t_k}) H^{\theta^*, \phi}(t_k, X_{t_k}, \alpha_{t_k}, a_k)\right] \\
& =\mathbb{E}_{0, x, i}^{\phi, \mathcal{P}}\left[\nabla_\theta J^{\theta^*}(t_k, X_{t_k}, \alpha_{t_k}) \int_{\mathcal{A}} H^{\theta^*, \phi}(t_k, X_{t_k}, \alpha_{t_k}, a) \pi^\phi(d a \mid t_k, X_{t_k}, \alpha_{t_k})\right]=0 .
\end{align*}
Consequently, the expected critic direction reduces to the accumulated discretization remainder:
\begin{equation*}
\mathbb{E}_{0, x, i}^{\phi, \mathcal{P}}[\Delta \theta_{\mathcal{P}}]=\sum_{k=0}^{N-1} \mathbb{E}_{0, x, i}^{\phi, \mathcal{P}}\left[\nabla_\theta J^{\theta^*}(t_k, X_{t_k}, \alpha_{t_k}) \varepsilon_{k, \mathcal{P}}^{\theta^*, \phi}\right] .
\end{equation*}
By the weighted remainder condition \eqref{eq:corvingence-of-critic parameter-gradient-and-policy}, the right-hand side tends to zero as $|\mathcal{P}| \rightarrow 0$. Therefore,
$$
\lim _{|\mathcal{P}| \rightarrow 0} \mathbb{E}_{0, x, i}^{\phi, \mathcal{P}}\left[\Delta \theta_{\mathcal{P}}\right]=0 .
$$
This proves the first assertion of \eqref{eq:convergence-of-finite-grid-critic-actor-directions}. In particular, the result follows from the PE equation and local consistency of the discrete TD increment; it does not require the sampled-action TD increment itself to be a martingale.

\textbf{Step 2. Consistency of the actor direction.}
We next consider $\Delta \phi_{\mathcal{P}}$. At the exact critic $\theta^*$, substituting \eqref{eq:finite-grid-policy-gradient-increment} into \eqref{eq:finite-grid-approximations-of-PE-and-pG-increment} gives
\begin{equation*}
\mathbb{E}_{0, x, i}^{\phi, \mathcal{P}}\left[\Delta \phi_{\mathcal{P}}\right]=\sum_{k=0}^{N-1} \mathbb{E}_{0, x, i}^{\phi, \mathcal{P}}\left[e^{-\beta t_k} \left\{\nabla_\phi \log \pi^\phi(a_k \mid t_k, X_{t_k}, \alpha_{t_k}) \delta_k^{\theta^*, \phi}+\gamma h_k \nabla_\phi p_k^\phi\right\}\right],
\end{equation*}
where $\nabla_\phi p_k^\phi$ denotes the policy-parameter derivative of the regularizer evaluated at $(t_k, X_{t_k}, \alpha_{t_k}, a_k)$.

Using the local consistency relation \eqref{eq:the-local-consistency-relation} and the fact that $\nabla_\phi \log \pi^\phi(a_k \mid t_k, X_{t_k}, \alpha_{t_k})$ is measurable after the action $a_k$ has been sampled, we obtain
\begin{align*}
\mathbb{E}_{0, x, i}^{\phi, \mathcal{P}}\left[\Delta \phi_{\mathcal{P}}\right]
& =\sum_{k=0}^{N-1} \mathbb{E}_{0, x, i}^{\phi, \mathcal{P}}\left[h_k e^{-\beta t_k}\left\{\nabla_\phi \log \pi^\phi(a_k \mid t_k, X_{t_k}, \alpha_{t_k}) H^{\theta^*, \phi}\left(t_k, X_{t_k}, \alpha_{t_k}, a_k\right)+\gamma \nabla_\phi p_k^\phi\right\}\right] \\
&\quad+\sum_{k=0}^{N-1} \mathbb{E}_{0, x, i}^{\phi, \mathcal{P}}\left[e^{-\beta t_k} \nabla_\phi \log \pi^\phi(a_k \mid t_k, X_{t_k}, \alpha_{t_k}) \varepsilon_{k, \mathcal{P}}^{\theta^*, \phi}\right] .
\end{align*}
Recall from the preceding policy-gradient theorem that the auxiliary reward satisfies
\begin{equation*}
\check{r}(t, x, i, a ; \phi)=  \nabla_\phi \log \pi^\phi(a \mid t, x, i) H^{\theta^*, \phi}(t, x, i, a) +\gamma \nabla_\phi p\left(t, x, i, a, \pi^\phi(\cdot \mid t, x, i)\right),
\end{equation*}
where $J^{\theta^*}=J^\phi$ has been used in the definition of $H^{\theta^*, \phi}$.
Therefore, averaging the first term in the preceding expression over $a_k$ yields
\begin{align}\label{eq:averaging-the-first-term-of-critic-direction}
 \mathbb{E}_{0, x, i}^{\phi, \mathcal{P}}\left[\Delta \phi_{\mathcal{P}}\right]
& =\mathbb{E}_{0, x, i}^{\phi, \mathcal{P}} \sum_{k=0}^{N-1} h_k e^{-\beta t_k} \int_{\mathcal{A}} \check{r}(t_k, X_{t_k}, \alpha_{t_k}, a ; \phi) \pi^\phi(d a \mid t_k, X_{t_k}, \alpha_{t_k}) \notag\\
&\quad +\sum_{k=0}^{N-1} \mathbb{E}_{0, x, i}^{\phi, \mathcal{P}}\left[e^{-\beta t_k} \nabla_\phi \log \pi^\phi(a_k \mid t_k, X_{t_k}, \alpha_{t_k}) \varepsilon_{k, \mathcal{P}}^{\theta^*, \phi}\right] .
\end{align}
Equation \eqref{eq:averaging-the-first-term-of-critic-direction} is the key step in the actor consistency argument. It shows that the expected discrete actor direction consists of a finite-grid approximation of the auxiliary reward integral in the policy-gradient theorem, together with an accumulated discretization error.

By \eqref{eq:corvingence-of-critic parameter-gradient-and-policy}, the second term on the right-hand side of \eqref{eq:averaging-the-first-term-of-critic-direction} tends to zero. Moreover, the occupation consistency condition \eqref{eq:convergence-of auxiliary} ensures that the first term converges to
$$
\mathbb{E}_{0, x, i}^{\pi^\phi} \int_0^T e^{-\beta t} \int_{\mathcal{A}} \check{r}(t, \widetilde{X}_t^{\pi^\phi}, \alpha_{t-}, a ; \phi) \pi^\phi(d a \mid t, \widetilde{X}_t^{\pi^\phi}, \alpha_{t-}) d t .
$$
By the continuous-time policy-gradient theorem established earlier, this expression equals $\nabla_\phi J(0, x, i ; \pi^\phi)$. Hence,
$$
\lim _{|\mathcal{P}| \rightarrow 0} \mathbb{E}_{0, x, i}^{\phi, \mathcal{P}}\left[\Delta \phi_{\mathcal{P}}\right]=\nabla_\phi J\left(0, x, i ; \pi^\phi\right) .
$$
This proves the second assertion of \eqref{eq:convergence-of-finite-grid-critic-actor-directions} and completes the proof.

\end{proof}
The proposition establishes that \textbf{the discrete Critic and Actor directions recover their respective continuous-time targets in the limit of a vanishing time mesh.} Notably, the two limits follow from different properties: the Critic limit uses the policy-averaged PE equation \eqref{eq:averaging-PE-equation}, whereas the Actor limit additionally uses the auxiliary reward representation and the policy-gradient theorem. Neither argument requires the sampled-action TD increment itself to be a martingale.

\textit{Batch Actor-Critic Construction.}
We now use the finite-grid directions in \eqref{eq:finite-grid-approximations-of-PE-and-pG-increment} to construct a batch Actor-Critic algorithm. The preceding proposition establishes their consistency with continuous-time PE and PG when the critic is exact and the time mesh vanishes. \textbf{The purpose of the batch construction is to estimate these directions from finitely many independent trajectories and update the critic and actor parameters through stochastic approximation.}

At iteration $m$, let $(\theta_m, \phi_m)$ denote the current critic and actor parameters, and let $\mathscr{F}_m$ contain all information collected before the current batch. Conditional on $\mathscr{F}_m$, we keep $(\theta_m, \phi_m)$ fixed and generate $M$ independent trajectories under the sample-and-hold implementation of $\pi^{\phi_m}$, each starting from the prescribed initial state-regime pair $(x, i)$.

For trajectory $n$ and time interval $[t_k, t_{k+1}]$, define the discrete TD increment, consistently with \eqref{eq:the-discrete-TD-increment}, by
\begin{align}\label{eq:the-betch-discrete-TD-increment}
\delta_{n,k}^{\theta_m,\phi_m} := &J^{\theta_m} (t_{k+1},X_{n,t_{k+1}},\alpha_{n,t_{k+1}}) - J^{\theta_m} (t_k,X_{n,t_k},\alpha_{n,t_k})\notag\\
&\quad+ h_k \left[ \widehat{\ell}_{n,k} - \beta J^{\theta_m} (t_k,X_{n,t_k},\alpha_{n,t_k}) \right].
\end{align}
Here, $\hat{\ell}_{n, k}$ is conditionally unbiased for the regularized instantaneous reward evaluated at $(t_k, X_{n, t_k}, \alpha_{n, t_k}, a_{n, k})$. The increment in \eqref{eq:the-betch-discrete-TD-increment} combines the observed change in the estimated value function with the regularized reward and discount contributions over the current interval.

For the actor, define
\begin{equation*}
P_{n,k}^{\phi_m} := \nabla_\phi p( t_k,X_{n,t_k},\alpha_{n,t_k},a_{n,k}, \pi^\phi(\cdot\mid t_k,X_{n,t_k},\alpha_{n,t_k}) ) |_{\phi=\phi_m}.
\end{equation*}
where the differentiation is taken with respect to the policy parameter while the observed state, regime, and action are held fixed. The corresponding actor increment is
\begin{equation*}
\mathcal{G}_{n,k}^{\theta_m,\phi_m} := e^{-\beta t_k} \bigg[  \nabla_\phi \log \pi^\phi (a_{n,k}\mid t_k,X_{n,t_k},\alpha_{n,t_k}) |_{\phi=\phi_m} \delta_{n,k}^{\theta_m,\phi_m} + \gamma h_k P_{n,k}^{\phi_m} \bigg].
\end{equation*}
Thus, the actor increment consists of the score-weighted TD increment and the explicit policy-parameter derivative of the regularizer, in accordance with the continuous-time policy-gradient representation.

Averaging the discrete directions over the $M$ trajectories gives
\begin{equation}\label{eq:batch-directions}
\Delta\theta_m = \frac{1}{M} \sum_{n=1}^{M} \sum_{k=0}^{N-1} \nabla_\theta J^\theta(t_k,X_{n,t_k},\alpha_{n,t_k})|_{\theta=\theta_m} \delta_{n,k}^{\theta_m,\phi_m},
\quad
\Delta\phi_m = \frac{1}{M} \sum_{n=1}^{M} \sum_{k=0}^{N-1} \mathcal{G}_{n,k}^{\theta_m,\phi_m}.
\end{equation}
Both directions are evaluated at the same pre-update parameter pair $(\theta_m,\phi_m)$. This is important because the Critic and Actor increments must correspond to the same current policy when applying the PE and PG representations.

The next proposition clarifies the effect of batch averaging. In particular, increasing the batch size reduces sampling variability, but does not remove systematic errors caused by time discretization or critic approximation.
\begin{proposition}\label{pro:Conditional-moments-of-AC-directions}
Suppose that, conditional on $\mathscr{F}_m$, the $M$ sampled trajectories are independent and identically distributed under the finite-grid implementation of $\pi^{\phi_m}$. Assume that the corresponding single-trajectory critic and actor directions have finite conditional second moments. Then
\begin{equation}\label{eq:second-moments-of-AC-directions}
\mathbb{E}[\Delta \theta_m \mid \mathscr{F}_m]  =f_{\mathcal{P}}(\theta_m, \phi_m), \quad
\mathbb{E}[\Delta \phi_m \mid \mathscr{F}_m] =g_{\mathcal{P}}(\theta_m, \phi_m),
\end{equation}
where
\begin{equation*}
f_{\mathcal{P}}(\theta, \phi):=\mathbb{E}_{0, x, i}^{\phi, \mathcal{P}}\left[\sum_{k=0}^{N-1} \nabla_\theta J^\theta(t_k, X_{t_k}, \alpha_{t_k}) \delta_k^{\theta, \phi}\right], \quad
g_{\mathcal{P}}(\theta, \phi):=\mathbb{E}_{0, x, i}^{\phi, \mathcal{P}}\left[\sum_{k=0}^{N-1} \mathcal{G}_k^{\theta, \phi}\right] .
\end{equation*}
Moreover, if the single-trajectory conditional second moments are bounded uniformly in $m$, the conditional covariance matrices of the batch directions satisfy
\begin{equation}\label{eq:conditional-covariance- of-batch-directions}
\operatorname{Cov}(\Delta \theta_m \mid \mathscr{F}_m)=O(M^{-1}), \quad \operatorname{Cov}(\Delta \phi_m \mid \mathscr{F}_m)=O(M^{-1}) .
\end{equation}
\end{proposition}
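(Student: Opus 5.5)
The plan is to treat the batch directions as sample means of i.i.d. single-trajectory directions, conditionally on $\mathscr{F}_m$. Write $\Delta\theta_m=\frac1M\sum_{n=1}^M \Delta\theta^{(n)}_{\mathcal P}(\theta_m,\phi_m)$ and $\Delta\phi_m=\frac1M\sum_{n=1}^M\Delta\phi^{(n)}_{\mathcal P}(\theta_m,\phi_m)$. Here $\Delta\theta^{(n)}_{\mathcal P}(\theta,\phi)$ and $\Delta\phi^{(n)}_{\mathcal P}(\theta,\phi)$ are the single-trajectory quantities \eqref{eq:finite-grid-approximations-of-PE-and-pG-increment}, computed from trajectory $n$ with parameters frozen at $(\theta,\phi)$. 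The key structural point is that $(\theta_m,\phi_m)$ is $\mathscr{F}_m$-measurable. The new trajectories are generated, conditionally on $\mathscr{F}_m$, from the finite-grid law $\mathbb{P}^{\phi_m,\mathcal P}_{0,x,i}$, with fresh Brownian, regime and exploration randomness independent of $\mathscr{F}_m$. Hence each summand is a fixed measurable functional $\Psi(\theta,\phi;\omega^{(n)})$ of the parameters and of the $n$-th trajectory's randomness, evaluated at $(\theta,\phi)=(\theta_m,\phi_m)$.

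For the conditional means, I would invoke the freezing (substitution) lemma for conditional expectations. If $Y$ is $\mathscr{F}_m$-measurable and $\omega^{(n)}$ is independent of $\mathscr{F}_m$, with $\Psi$ jointly measurable and integrable, then $\mathbb{E}[\Psi(Y;\omega^{(n)})\mid\mathscr{F}_m]=\psi(Y)$, where $\psi(y):=\mathbb{E}[\Psi(y;\omega^{(n)})]$. Here "independent" is understood as: the conditional law given $\mathscr{F}_m$ is the finite-grid law under $\phi_m$, so that the kernel depends on $\phi_m$. Applying this with $\psi=f_{\mathcal P}$ for the critic sum and $\psi=g_{\mathcal P}$ for the actor sum gives the conditional mean of each summand. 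Linearity of conditional expectation then gives \eqref{eq:second-moments-of-AC-directions}. Integrability is supplied by the assumed finite conditional second moments, which imply finite first moments.

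For the covariances, I would expand the conditional covariance of the sum as $\frac1{M^2}\sum_{n,n'}\operatorname{Cov}(\Delta\theta^{(n)},\Delta\theta^{(n')}\mid\mathscr{F}_m)$. The off-diagonal terms vanish by conditional independence of the trajectories given $\mathscr{F}_m$. The same freezing argument applied to products shows that the conditional cross moment factorizes into $f_{\mathcal P}f_{\mathcal P}^\top$. By identical distribution, the diagonal terms all equal $\Sigma_\theta(\theta_m,\phi_m):=\operatorname{Cov}^{\phi_m,\mathcal P}_{0,x,i}(\Delta\theta_{\mathcal P})$. This yields $\operatorname{Cov}(\Delta\theta_m\mid\mathscr{F}_m)=M^{-1}\Sigma_\theta(\theta_m,\phi_m)$. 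Since $\|\Sigma_\theta\|\le\mathbb{E}[\|\Delta\theta^{(1)}\|^2\mid\mathscr{F}_m]\le C$ uniformly in $m$ by hypothesis, the bound is $O(M^{-1})$ with a constant independent of $m$. The actor case is identical.

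The main obstacle is the measure-theoretic justification of the freezing step. One must make precise that, conditionally on $\mathscr{F}_m$, the batch has the product law $(\mathbb{P}^{\phi_m,\mathcal P}_{0,x,i})^{\otimes M}$, and that the maps $(\theta,\phi)\mapsto f_{\mathcal P},g_{\mathcal P}$ and the directions are jointly measurable. I would handle this by realizing the trajectories through a measurable simulation map of $(\phi,\text{noise})$, with noise independent of $\mathscr{F}_m$. The map is built from the strong solutions of Lemma \ref{lem:growth-condition}(i) and the sampling map $\phi(\cdot,\varsigma)$. Joint measurability then follows from measurable dependence of strong solutions on parameters, and the remaining computations are routine.
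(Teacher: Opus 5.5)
Your proposal is correct and follows the same route as the paper. Conditionally on $\mathscr{F}_m$, the batch directions are sample means of i.i.d.\ single-trajectory directions, so linearity gives the conditional means, conditional independence removes the cross-covariances, and the covariance is $M^{-1}$ times the single-trajectory covariance, which the uniform second-moment bound controls; your freezing-lemma and joint-measurability discussion only makes explicit what the paper takes for granted under the conditional i.i.d.\ hypothesis (which already supplies the needed conditional law, so the simulation-map construction is optional).
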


\begin{proof}
 Since the parameter pair $\left(\theta_m, \phi_m\right)$ is fixed conditional on $\mathscr{F}_m$, the batch directions in \eqref{eq:batch-directions} are averages of conditionally independent and identically distributed single-trajectory directions. Linearity of conditional expectation immediately yields \eqref{eq:second-moments-of-AC-directions}. Conditional independence also eliminates the cross-covariance terms between different trajectories, so the covariance of each batch average equals $M^{-1}$ times the corresponding single-trajectory conditional covariance matrix. The uniform second-moment bounds therefore imply \eqref{eq:conditional-covariance- of-batch-directions}.
\end{proof}
The proposition shows that batch averaging preserves the expected finite-grid estimating directions while reducing their Monte Carlo variance. However, the mean fields $f_{\mathcal{P}}$ and $g_{\mathcal{P}}$ generally differ from their continuous-time counterparts when the mesh size is nonzero.

In particular, if the critic is exact at the current policy, $J^{\theta_m}=J^{\phi_m}$, the finite-grid consistency result \eqref{eq:convergence-of-finite-grid-critic-actor-directions} gives, with $\left(\theta_m, \phi_m\right)$ held fixed,
\begin{equation*}
\mathbb{E}[\Delta \theta_m \mid \mathscr{F}_m] \rightarrow 0,  \quad
\mathbb{E}[\Delta \phi_m \mid \mathscr{F}_m] \rightarrow \nabla_\phi J(0, x, i ; \pi^{\phi_m}), \quad |P| \rightarrow 0 .
\end{equation*}
\textbf{Thus, the batch critic direction approaches the continuous-time PE zero-moment condition, whereas the batch actor direction approaches the true policy gradient.} These limits do not require $M \rightarrow \infty$, since they concern conditional expectations; increasing $M$ instead controls the variability of the corresponding sample estimates.

When the critic is approximate, the expected actor direction contains an additional error that is distinct from finite-grid approximation error. To make this distinction precise, let $\theta^*(\phi_m)$ denote an exact critic parameter for the current policy and write
$J^{\theta^*(\phi_m)}=J^{\phi_m} $.
Define the critic approximation error by
$$
E_m(t, x, i):=J^{\theta_m}(t, x, i)-J^{\phi_m}(t, x, i) .
$$
The discrepancy between the expected batch actor direction and the true policy gradient can then be decomposed as
\begin{equation*}
g_{\mathcal{P}} (\theta_m, \phi_m)-\nabla_\phi J(0, x, i ; \pi^{\phi_m})
=\underbrace{g_{\mathcal{P}}(\theta^*(\phi_m), \phi_m)-\nabla_\phi J(0, x, i ; \pi^{\phi_m})}_{\text {Finite-grid approximation bias }} +\underbrace{g_{\mathcal{P}}(\theta_m, \phi_m)-g_{\mathcal{P}}(\theta^*(\phi_m), \phi_m)}_{\text {Critic approximation bias }} .
\end{equation*}
This decomposition separates the error introduced by the finite time grid from that caused by replacing the exact value function with an approximate critic.

The second term can be expressed explicitly using the discrete TD increments. Indeed, subtracting the exact-critic increment from \eqref{eq:the-discrete-TD-increment} gives
\begin{equation*}
\delta_k^{\theta_m, \phi_m}-\delta_k^{\theta^*(\phi_m), \phi_m}= E_m(t_{k+1}, X_{t_{k+1}}, \alpha_{t_{k+1}}) -E_m(t_k, X_{t_k}, \alpha_{t_k}) -\beta h_k E_m(t_k, X_{t_k}, \alpha_{t_k}) .
\end{equation*}
Since the regularizer derivative is the same for both critics, it cancels when taking the difference between their actor increments. Hence,
\begin{align*}
&g_{\mathcal{P}}(\theta_m, \phi_m)-g_{\mathcal{P}}(\theta^*(\phi_m), \phi_m)\\
& =\mathbb{E}_{0, x, i}^{\phi_m, \mathcal{P}} \sum_{k=0}^{N-1} e^{-\beta t_k} \nabla_\phi \log \pi^{\phi_m}(a_k \mid t_k, X_{t_k}, \alpha_{t_k}) \\
& \quad \times\left[E_m(t_{k+1}, X_{t_{k+1}}, \alpha_{t_{k+1}})-E_m(t_k, X_{t_k}, \alpha_{t_k})-\beta h_k E_m(t_k, X_{t_k}, \alpha_{t_k})\right] .
\end{align*}
Therefore, the critic approximation affects the expected actor direction through the score-weighted TD increment of the value-function approximation error. In particular, this bias vanishes when the critic is exact, although exact representation is not necessary if the corresponding score-weighted error has zero expectation.

Combining this observation with Proposition \ref{pro:Conditional-moments-of-AC-directions} shows that the batch actor direction is subject to three distinct effects: finite-grid approximation bias, critic approximation bias, and Monte Carlo sampling variability. \textbf{The first two affect its conditional expectation, whereas the last affects its conditional variance. Consequently, increasing the batch size cannot, by itself, eliminate the errors caused by a fixed time grid or an inaccurate critic.}

We now use the directions in \eqref{eq:batch-directions} to update the critic and actor parameters. Under the reward-maximization convention, consider the projected updates
\begin{equation*}
\theta_{m+1}  =\Pi_{\Theta}\left[\theta_m+\eta_{\theta, m} \Delta \theta_m\right], \quad
\phi_{m+1} =\Pi_{\Phi}\left[\phi_m+\eta_{\phi, m} \Delta \phi_m\right],
\end{equation*}
where $\Pi_{\Theta}$ and $\Pi_{\Phi}$ denote projections onto the prescribed parameter domains, and $\eta_{\theta, m}, \eta_{\phi, m}>0$ are the critic and actor learning rates.

The two updates have different interpretations. The critic update seeks a root of the finite-grid PE moment equation $f_{\mathcal{P}}(\theta, \phi)=0$, whereas the actor update seeks to improve the policy using the estimated policy-gradient direction $g_{\mathcal{P}}(\theta, \phi)$.

\begin{remark}[Interpretation and implementation]
The preceding results establish the connection between the continuous-time Actor-Critic framework and its finite-grid batch implementation. Their implications are threefold.

First, the discrete TD increment in \eqref{eq:the-discrete-TD-increment} retains the observed value-function changes, including those arising from regime transitions. As a result, the critic and actor directions can be computed from sampled trajectories without explicit evaluation of the environmental drift, diffusion coefficients, or switching generator. The operator $\mathcal{L}^a$ appears in the consistency analysis through the PE residual \eqref{eq:the-action-dependent-PE-residual}, but it does not need to be evaluated in the algorithm.

Second, the occupation and time-discretization consistency conditions are essential for connecting finite-grid sampling to continuous-time theory. A sample-and-hold policy generally does not satisfy exact Assumption \ref{assum:occupation-matching} at a fixed nonzero mesh, since actions remain unchanged between decision times even when the state or regime changes. Therefore, the continuous-time PE and PG identities are recovered through limiting arguments rather than being assumed to hold exactly for the finite-grid implementation.

Third, the batch and two-time-scale constructions address different aspects of the learning problem. Batch averaging reduces the variability of the estimating directions, while time-grid refinement reduces discretization bias. Two-time-scale stochastic approximation is intended to allow the critic to track the evolving policy, thereby controlling the approximation error entering the actor direction. A complete convergence analysis must account for all three effects, together with the stability and identification properties of the underlying mean-field equations.

Consequently, the proposed finite-grid batch Actor-Critic scheme provides an implementable approximation of the continuous-time martingale-based PE and PG framework. Under the stated consistency conditions, the exact-critic batch directions recover the continuous-time PE zero-moment condition and policy-gradient representation in the vanishing-mesh limit, while the additional stochastic approximation conditions govern the behavior of the coupled parameter updates.
\end{remark}

\begin{algorithm}[htbp]
\caption{Offline--Episodic Regime-Switching Actor--Critic Algorithm}
\textbf{Inputs:}
Initial state-regime pair $(x,i)$; partition $\mathcal{P}$, $h_k$; initial parameters $(\theta_0,\phi_0)$; batch size $M$; number of actor-critic iterations $K_{\mathrm{AC}}$; learning-rate sequences $\{\eta_{\theta,m}\}$ and $\{\eta_{\phi,m}\}$; functional form of the critic $J^\theta(t,x,i)$; functional form of the policy $\pi^\phi(\cdot\mid t,x,i)$; functional form of the regularizer $p(t,x,i,a,\pi^\phi(\cdot\mid t,x,i))$; temperature parameter $\gamma$; discount factor $\beta$; projection sets $\Theta$ and $\Phi$. When no parameter constraints are imposed, $\Pi_\Theta$ and $\Pi_\Phi$ are interpreted as the identity mappings.\\[1ex]
\textbf{Required program:}
A regime-switching environment simulator
$$
(X',I',\widehat{\ell}) = \operatorname{Environment}_{h} (t,X,I,a),
$$
which, given the current time-state-regime-action tuple $(t,X,I,a)$, generates the next state $X'$, the next regime $I'$, and an observed regularized reward rate $\widehat{\ell}$, satisfying
$$
\mathbb{E}[\widehat{\ell}\mid\mathcal{H},a] = r(t,X,I,a) + \gamma p( t,X,I,a,\pi^\phi(\cdot\mid t,X,I) ).
$$
\textbf{Learning procedure:}
\begin{algorithmic}[1]
\State Initialize $(\theta,\phi)\leftarrow(\theta_0,\phi_0)$.
\For{$m=0,\ldots,K_{\mathrm{AC}}-1$}
    \State Freeze the current parameter pair $(\theta_m,\phi_m)$.
    \For{trajectory $n=1,\ldots,M$}
        \State Initialize $X_{n,t_0}\leftarrow x, \quad \alpha_{n,t_0}\leftarrow i$.
        \For{$k=0,\ldots,N-1$}
            \State Sample $a_{n,k} \sim \pi^{\phi_m} ( \cdot\mid t_k,X_{n,t_k},\alpha_{n,t_k})$.
            \State Apply $a_{n,k}$ over $[t_k,t_{k+1})$ and observe $(X_{n,t_{k+1}}, \alpha_{n,t_{k+1}}, \widehat{\ell}_{n,k})$.
            \State Store the state-regime-action-reward transition and continue until $t_N=T$.
        \EndFor
    \EndFor
    \State After all $M$ complete trajectories have been generated, compute, for $n=1,\ldots,M$ and $k=0,\ldots,N-1$,
    \begin{align*}
    \delta_{n,k}^{\theta_m,\phi_m} = J^{\theta_m} ( t_{k+1}, X_{n,t_{k+1}}, \alpha_{n,t_{k+1}} ) - J^{\theta_m} ( t_k, X_{n,t_k}, \alpha_{n,t_k} )+ h_k \left[ \widehat{\ell}_{n,k} - \beta J^{\theta_m} ( t_k, X_{n,t_k}, \alpha_{n,t_k} ) \right].
    \end{align*}
    \State Compute the critic direction
    $$
    \Delta\theta_m = \frac1M \sum_{n=1}^{M} \sum_{k=0}^{N-1} \nabla_\theta J^{\theta_m} ( t_k, X_{n,t_k}, \alpha_{n,t_k} ) \delta_{n,k}^{\theta_m,\phi_m}.
    $$
    \State For the actor, compute
    $$
    P_{n,k}^{\phi_m} = \nabla_\phi p( t_k, X_{n,t_k}, \alpha_{n,t_k}, a_{n,k}, \pi^\phi(\cdot\mid t_k,X_{n,t_k},\alpha_{n,t_k}) )|_{\phi=\phi_m},
    $$
    \State and
    \begin{align*}
    \mathcal G_{n,k}^{\theta_m,\phi_m} = e^{-\beta t_k} \Bigg[ \nabla_\phi \log \pi^{\phi_m}(a_{n,k}\mid t_k,X_{n,t_k},\alpha_{n,t_k}) \delta_{n,k}^{\theta_m,\phi_m}+ \gamma h_k P_{n,k}^{\phi_m} \Bigg].
    \end{align*}
    \State Then set
    $$
    \Delta\phi_m = \frac1M \sum_{n=1}^{M} \sum_{k=0}^{N-1} \mathcal G_{n,k}^{\theta_m,\phi_m}.
    $$
    \State Update the critic and actor by
    $$
    \theta_{m+1} = \Pi_\Theta \left[ \theta_m + \eta_{\theta,m}\Delta\theta_m \right], \qquad \phi_{m+1} = \Pi_\Phi \left[ \phi_m + \eta_{\phi,m}\Delta\phi_m \right].
    $$
\EndFor
\State \textbf{Return} $(\theta_{K_{\mathrm{AC}}},\phi_{K_{\mathrm{AC}}})$.
\end{algorithmic}
\end{algorithm}

\subsection{Online regime-switching Actor-Critic}

\noindent\textit{Continuous-Time Representation.} The Proposition \ref{pro:Trajectory-based-policy-gradient-representation} established in the preceding offline actor-critic analysis expresses the policy gradient as an expected integral over the remaining time horizon. Although this representation provides a basis for episodic learning, it does not directly yield an actor update after each observed transition. To develop an online formulation, we combine the first-order stationarity condition with the policy-evaluation moment condition, thereby obtaining an additional class of actor estimating directions.

Recall the regularized instantaneous reward $\ell^\phi$ defined in
\eqref{eq:the-regularized-instantaneous-reward}, and let $J^\phi(t, x, i)=J\left(t, x, i ; \pi^\phi\right)$ be the value function of the current policy. We write $d \delta_t^\phi$ for the sampled-action TD increment $d \delta_t^{\theta^*, \phi}$ associated with the exact critic $J^{\theta^*}(\cdot,\cdot,\cdot)=J^\phi(\cdot,\cdot,\cdot)$, as defined in the preceding offline analysis.

The key observation is that the exact critic satisfies a PE moment condition even though $d \delta_t^\phi$ is not necessarily a martingale increment along the sampled-action trajectory. This allows us to modify the score-weighted actor direction without changing its expectation.
\begin{lemma}\label{lem:PE-orthogonality-condtion}
Suppose that the Proposition \ref{pro:Trajectory-based-critic moment-condition} hold. Let $\zeta_t=\zeta(t, X_t, \alpha_{t-})$
be a predictable, action-independent, $\mathbb{R}^{d_\phi}$-valued test process satisfying the required integrability conditions. Assume that Assumption
\ref{assum:occupation-matching} holds for the corresponding weighted PE residual. Then
\begin{equation}\label{eq:PE-orthogonality-condtion}
\mathbb{E}_{0, x, i}^\phi\left[\int_0^T e^{-\beta t} \zeta_t d \delta_t^\phi\right]=0.
\end{equation}
\end{lemma}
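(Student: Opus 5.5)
The plan mirrors the proof of Proposition \ref{pro:Trajectory-based-critic moment-condition}, with the critic gradient replaced by the weighted test process $e^{-\beta t}\zeta_t$. First, apply the sampled-action TD decomposition \eqref{eq:continuous-time-semimartingale-increment} at the exact critic $J^{\theta^*}=J^\phi$:
\begin{equation*}
d\delta_t^\phi = H^{\theta^*,\phi}(t,X_t,\alpha_{t-},a_t)\,dt + d\mathcal N_t^{\theta^*,\phi},
\end{equation*}
where $H^{\theta^*,\phi}$ is the action-dependent PE residual \eqref{eq:the-action-dependent-PE-residual}. Multiplying by $e^{-\beta t}\zeta_t$ and integrating over $[0,T]$ splits the left-hand side of \eqref{eq:PE-orthogonality-condtion} into a drift term and a stochastic integral. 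Each term will be shown to have zero expectation.

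\emph{Martingale term.} Since $\zeta_t=\zeta(t,X_t,\alpha_{t-})$ is predictable, the integrand $e^{-\beta t}\zeta_t$ is predictable. The deterministic factor is bounded by $1$ because $\beta\ge0$. The process $\mathcal N^{\theta^*,\phi}$ consists of a Brownian integral and compensated integrals against $\widetilde N_{kj}$. The realization is assumed to preserve the Brownian martingale and the switching compensators, and the stated integrability conditions hold. Under these conditions $\int_0^T e^{-\beta t}\zeta_t\,d\mathcal N_t^{\theta^*,\phi}$ is a true martingale evaluated at $T$, so its expectation is zero. If only local integrability were available, I would localize with stopping times $\tau_n\uparrow T$ and pass to the limit by dominated convergence, as in the derivation of the probabilistic policy-gradient representation.

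\emph{Drift term.} This is where action-independence of $\zeta$ enters. Apply Assumption \ref{assum:occupation-matching} with $f(s,x,j,a)=e^{-\beta s}\zeta(s,x,j)H^{\theta^*,\phi}(s,x,j,a)$, componentwise and extended to unbounded $f$ by the assumed absolute integrability. This converts the trajectory expectation into
\begin{equation*}
\mathbb{E}^{\pi^\phi}_{0,x,i}\int_0^T e^{-\beta s}\zeta(s,\widetilde X_s,\alpha_{s-})\int_{\mathcal A}H^{\theta^*,\phi}(s,\widetilde X_s,\alpha_{s-},a)\,\pi^\phi(da\mid s,\widetilde X_s,\alpha_{s-})\,ds .
\end{equation*}
Because $\zeta$ does not depend on $a$, it factors out of the inner integral. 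The inner integral then vanishes identically by the policy-averaged PE identity \eqref{eq:averaging-PE-equation}, which is equivalent to \eqref{eq:PE-for-J-theta(phi)}. Summing the two zero contributions yields \eqref{eq:PE-orthogonality-condtion}.

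\emph{Main obstacle.} The delicate step is justifying the occupation-matching transfer for this unbounded, vector-valued integrand. $H^{\theta^*,\phi}$ involves $\mathcal L^aJ^\phi$, including the switching term $\sum_j q_{ij}J^\phi(t,x,j)$, and the regularizer. One must therefore check that $e^{-\beta t}\|\zeta_t\|\,|H^{\theta^*,\phi}|$ is integrable under both laws, so that truncation ($f\wedge K$, $K\to\infty$) and monotone or dominated convergence apply. This is exactly the hypothesis that Assumption \ref{assum:occupation-matching} holds for the weighted PE residual. Everything else is bookkeeping.
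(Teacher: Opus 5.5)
Your proposal is correct and follows essentially the same route as the paper. You decompose $d\delta_t^\phi$ via the sampled-action semimartingale decomposition at the exact critic, show the stochastic integral against the predictable weight $e^{-\beta t}\zeta_t$ has zero expectation, and transfer the drift term to the relaxed process by occupation matching; action-independence of $\zeta$ lets it factor out, so the policy-averaged PE residual vanishes and the drift term is zero. Your added remarks on localization and on extending occupation matching to the unbounded vector-valued integrand spell out conditions the paper simply assumes.
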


\begin{proof}
By \eqref{eq:continuous-time-semimartingale-increment}, the TD increment associated with the exact critic $J^{\theta^*}(\cdot,\cdot,\cdot)=J^\phi(\cdot,\cdot,\cdot)$ admits
\begin{equation*}
d\delta_t^\phi=\left[(\mathcal{L}^{a_t} J^\phi)(t, X_t, \alpha_{t-})+\ell^\phi(t, X_t, \alpha_{t-}, a_t)-\beta J^\phi(t, X_t, \alpha_{t-})\right] d t+d \mathcal{N}_t^{\theta^*, \phi}.
\end{equation*}
Since $e^{-\beta t} \zeta_t$ is predictable and satisfies the required integrability conditions, the stochastic integral has zero expectation:
$$
\mathbb{E}_{0, x, i}^\phi\left[\int_0^T e^{-\beta t} \zeta_t d \mathcal{N}_t^{\theta^*, \phi}\right]=0.
$$
Consequently, applying the occupation-matching condition to the remaining drift term and using the policy-evaluation equation \eqref{eq:PE-for-J-theta(phi)}, we obtain
\begin{align*}
& \mathbb{E}_{0, x, i}^\phi\left[\int_0^T e^{-\beta t} \zeta_t d \delta_t^\phi\right] \\
& =\mathbb{E}_{0, x, i}^\phi \int_0^T e^{-\beta t} \zeta_t\left[(\mathcal{L}^{a_t} J^\phi)(t, X_t, \alpha_{t-})+\ell^\phi(t, X_t, \alpha_{t-}, a_t)-\beta J^\phi(t, X_t, \alpha_{t-})\right] d t \\
& =\mathbb{E}_{0, x, i}^{\pi^\phi} \int_0^T e^{-\beta t} \zeta(t, \widetilde{X}_t^{\pi^\phi}, \alpha_{t-}) \\
& \quad \times \int_{\mathcal{A}}\left[(\mathcal{L}^a J^\phi)(t, \widetilde{X}_t^{\pi^\phi}, \alpha_{t-})+\ell^\phi(t, \widetilde{X}_t^{\pi^\phi}, \alpha_{t-}, a)-\beta J^\phi(t, \widetilde{X}_t^{\pi^\phi}, \alpha_{t-})\right] \pi^\phi(a \mid t, \widetilde{X}_t^{\pi^\phi}, \alpha_{t-}) \lambda(d a) d t \\
& =0.
\end{align*}
The last equality follows from \eqref{eq:PE-for-J-theta(phi)}, which holds for every $(t,x,i)$. This proves \eqref{eq:PE-orthogonality-condtion}.
\end{proof}

The Lemma \eqref{lem:PE-orthogonality-condtion} shows that the PE moment condition provides an auxiliary zero-mean term for the actor. Adding this term to the Proposition \ref{pro:Trajectory-based-policy-gradient-representation} in the preceding offline actor-critic subsection preserves the expected actor direction when the critic is exact. The resulting characterization is given below.

\begin{theorem}\label{the:orthogonality-condition}
Suppose that the assumptions required by the preceding Proposition \ref{pro:Trajectory-based-policy-gradient-representation} hold.
Let $\zeta_t$ satisfy the conditions of the Lemma \ref{lem:PE-orthogonality-condtion}. Then, for every admissible policy parameter $\phi$,
\begin{align}\label{eq:orthogonality-condition}
\nabla_\phi J(0, x, i ; \pi^\phi)=\mathbb{E}_{0, x, i}^\phi & \bigg[\int _ { 0 } ^ { T } e ^ { - \beta t } \Big\{[\nabla_\phi \log \pi^\phi(a_t \mid t, X_t, \alpha_{t-})+\zeta_t] d \delta_t^\phi\notag\\
& +\gamma \nabla_\phi p(t, X_t, \alpha_{t-}, a_t, \pi^\phi(\cdot \mid t, X_t, \alpha_{t-})) d t\Big\}\bigg].
\end{align}
In particular, if $\phi^*$ is an interior stationary point satisfying
$\nabla_\phi J(0, x, i ; \pi^{\phi^*})=0$,
then
\begin{align*}
0=\mathbb{E}_{0, x, i}^{\phi^*}\bigg[\int_0^T e^{-\beta t} & \Big\{\big[\nabla_\phi \log \pi^{\phi^*}\left(a_t \mid t, X_t, \alpha_{t-}\right)+\zeta_t\big] d \delta_t^{\phi^*} \\
& +\gamma \nabla_\phi p(t, X_t, \alpha_{t-}, a_t, \pi^\phi(\cdot \mid t, X_t, \alpha_{t-}))|_{\phi=\phi^*} d t\Big\}\bigg].
\end{align*}
\end{theorem}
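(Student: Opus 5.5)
The plan is to split the right-hand side of \eqref{eq:orthogonality-condition} by linearity into two pieces, each identified by an earlier result. The first piece is the score-weighted TD term together with the regularizer term. The second is the $\zeta$-weighted TD term:
\begin{equation*}
\mathbb{E}_{0,x,i}^\phi\Big[\int_0^T e^{-\beta t}\big\{\nabla_\phi\log\pi^\phi(a_t\mid t,X_t,\alpha_{t-})\,d\delta_t^\phi+\gamma\nabla_\phi p\,dt\big\}\Big]+\mathbb{E}_{0,x,i}^\phi\Big[\int_0^T e^{-\beta t}\zeta_t\,d\delta_t^\phi\Big].
\end{equation*}
The splitting is legitimate provided each of the two stochastic integrals, and the time integral of $\nabla_\phi p$, is separately integrable. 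I will take this from the integrability hypotheses of Proposition \ref{pro:Trajectory-based-policy-gradient-representation} and Lemma \ref{lem:PE-orthogonality-condtion}.

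By definition of $d\mathcal{G}_t^{\theta^*,\phi}$ with the exact critic $J^{\theta^*}=J^\phi$, so that $d\delta_t^{\theta^*,\phi}=d\delta_t^\phi$, the first expectation is exactly $\mathbb{E}_{0,x,i}^\phi[\int_0^T d\mathcal{G}_t^{\theta^*,\phi}]$. Proposition \ref{pro:Trajectory-based-policy-gradient-representation} identifies this with $\nabla_\phi J(0,x,i;\pi^\phi)$. The second expectation vanishes by Lemma \ref{lem:PE-orthogonality-condtion}. That lemma applies because $\zeta_t=\zeta(t,X_t,\alpha_{t-})$ is predictable and action-independent, and the occupation-matching assumption is imposed for the weighted residual. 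Summing the two pieces gives \eqref{eq:orthogonality-condition}.

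The stationarity statement then follows by substitution: take $\phi=\phi^*$ in \eqref{eq:orthogonality-condition} and use $\nabla_\phi J(0,x,i;\pi^{\phi^*})=0$. Since $\phi^*$ is interior in the open set $\Phi$, the representation at $\phi^*$ is valid without boundary issues.

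The main obstacle is justifying the separation into two expectations. Lemma \ref{lem:PE-orthogonality-condtion} is proved by splitting $d\delta_t^\phi$ into drift and martingale parts. So I must check that $e^{-\beta t}\zeta_t\,d\mathcal{N}^{\theta^*,\phi}_t$ is a true martingale rather than merely local, and that the drift $\zeta_t H^{\theta^*,\phi}$ is $dt\otimes d\mathbb{P}$-integrable. The same two checks are needed for the score-weighted part. I would first try to absorb both into the stated integrability assumptions, and if necessary localize with stopping times $\tau_n\uparrow T$ and pass to the limit by dominated convergence, as in the derivation of the probabilistic policy-gradient representation. A secondary point is that $\zeta_t$ must not depend on $a_t$. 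This is what allows the policy-averaged PE equation \eqref{eq:PE-for-J-theta(phi)} to annihilate the drift, whereas the score term, which does depend on $a_t$, produces $\check r$ instead.
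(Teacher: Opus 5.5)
Your proposal is correct and follows the paper's proof. The paper likewise takes the $\zeta_t=0$ case from Proposition~\ref{pro:Trajectory-based-policy-gradient-representation}, adds the zero-mean term $\mathbb{E}_{0,x,i}^\phi\int_0^T e^{-\beta t}\zeta_t\,d\delta_t^\phi=0$ from Lemma~\ref{lem:PE-orthogonality-condtion}, and then sets $\phi=\phi^*$. Your remark that each piece must be separately integrable for the expectation to split makes explicit a step the paper leaves implicit.
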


\begin{proof}
The preceding Proposition \ref{pro:Trajectory-based-policy-gradient-representation} establishes \eqref{eq:orthogonality-condition} with $\zeta_t=0$. By Lemma \ref{lem:PE-orthogonality-condtion}, the additional term satisfies
$$
\mathbb E_{0,x,i}^\phi
\int_0^T e^{-\beta t}\zeta_t\,d\delta_t^\phi=0.
$$
Adding this identity to the Proposition \ref{pro:Trajectory-based-policy-gradient-representation} representation proves \eqref{eq:orthogonality-condition}. The stationary-policy assertion follows immediately by setting $\phi=\phi^*$.
\end{proof}
This theorem establishes a family of actor orthogonality conditions through the choice of $\zeta_t$. Unlike the critic test process used in CTD(0), $\zeta_t$ is introduced into the actor direction to preserve the policy-gradient expectation while allowing alternative trajectory-based estimating directions.
Two distinctions are important. First, \eqref{eq:orthogonality-condition} characterizes the gradient of the policy objective and hence its first-order stationarity condition; without concavity or another suitable sufficiency condition, stationarity does not imply global optimality. Second, the zero-moment condition at $\phi^*$ holds for the accumulated actor increment over the horizon. It does not imply that the actor increment has zero conditional expectation at each time, nor does it justify multiplying the entire actor increment by an arbitrary predictable process.
The role of $\zeta_t$ is therefore different from imposing an unrestricted family of actor orthogonality conditions: its zero-mean contribution is justified specifically by the PE moment identity \eqref{eq:PE-orthogonality-condtion}.

We now replace the exact value function $J^\phi(\cdot,\cdot,\cdot)$ by a parameterized critic $J^\theta(\cdot,\cdot,\cdot)$, satisfying the terminal condition $J^\theta(T, x, i)=h(x)$. The sampled-action TD increment $d \delta_t^{\theta, \phi}$ is the same as that used in the preceding offline actor-critic formulation.

The continuous-time critic and actor estimating increments are defined by
\begin{equation}\label{eq:the-PE-direction}
\begin{aligned}
d U_t^\theta= & \nabla_\theta J^\theta\left(t, X_t, \alpha_{t-}\right) d \delta_t^{\theta, \phi} \\
d U_t^\phi= & e^{-\beta t}\left[\left(\nabla_\phi \log \pi^\phi\left(a_t \mid t, X_t, \alpha_{t-}\right)+\zeta_t\right) d \delta_t^{\theta, \phi}+\gamma \nabla_\phi p\left(t, X_t, \alpha_{t-}, a_t, \pi^\phi(\cdot \mid t, X_t, \alpha_{t-})\right) d t\right] .
\end{aligned}
\end{equation}
These directions share the same observed TD increment, but they have different statistical interpretations. The critic seeks a root of the PE moment equation, whereas the actor uses a score-weighted TD increment together with the explicit policy derivative of the regularizer to estimate the policy gradient.

When the critic is exact, Theorem \ref{the:orthogonality-condition} gives
\begin{equation}\label{eq:expectation-of-Ut-phi}
\mathbb{E}_{0, x, i}^\phi\left[\int_0^T d U_t^\phi\right]=\nabla_\phi J(0, x, i ; \pi^\phi) .
\end{equation}
Consequently, at an interior stationary policy $\phi^*$,
$$
\mathbb{E}_{0, x, i}^{\phi^*}\left[\int_0^T d U_t^{\phi^*}\right]=0, \quad J^\theta(\cdot,\cdot,\cdot)=J^{\phi^*}(\cdot,\cdot,\cdot)
$$
For an approximate critic, however, the additional term involving $\zeta_t$ need not have zero expectation. To make this dependence explicit, let
$$
E^\theta(t, x, i):=J^\theta(t, x, i)-J^\phi(t, x, i)
$$
denote the critic approximation error. Subtracting the exact-critic actor increment from \eqref{eq:the-PE-direction}, and using \eqref{eq:expectation-of-Ut-phi}, yields
\begin{align}\label{eq:approximation-bias-of-online-actor-direction}
& \mathbb{E}_{0, x, i}^\phi\left[\int_0^T d U_t^\phi\right]-\nabla_\phi J(0, x, i ; \pi^\phi) \notag\\
& =\mathbb{E}_{0, x, i}^\phi \int_0^T e^{-\beta t}\left[\nabla_\phi \log \pi^\phi(a_t \mid t, X_t, \alpha_{t-})+\zeta_t\right]  \times\left[d E^\theta(t, X_t, \alpha_t)-\beta E^\theta(t, X_t, \alpha_{t-}) d t\right] .
\end{align}
Equation \eqref{eq:approximation-bias-of-online-actor-direction} shows that the approximation bias of the online actor direction depends on the score- and $\zeta_t$-weighted TD increment of the critic approximation error. Thus, although $\zeta_t$ has zero expected contribution at the exact critic, it may affect both the bias and variance of the actor estimate when the critic is approximate. In particular, choosing $\zeta_t \neq 0$ does not automatically improve estimation accuracy. The choice $\zeta_t \equiv 0$ recovers the basic score-weighted actor direction established in the offline formulation.

The representation \eqref{eq:the-PE-direction} provides continuous-time increments that can be evaluated from newly observed state-regime-action data. Nevertheless, its exact population interpretation remains an identity for the accumulated increments under a fixed policy. We next construct the corresponding single-transition updates and examine their relationship with the continuous-time formulation.

{\it Time Discretization and Online Updates .} We now discretize the continuous-time directions in \eqref{eq:the-PE-direction} to obtain an online actor-critic algorithm that updates its parameters after each observed transition. \textbf{Unlike the preceding batch construction, which accumulates the estimating directions over complete trajectories before updating the parameters, the online scheme uses the information obtained from a single transition.}

We adopt the time partition $\mathcal{P}$ and pre-action information sets $\mathcal{H}_k$ introduced in the preceding offline formulation. At each decision time $t_k$, the current parameter pair $(\theta_m, \phi_m)$ is $\mathcal{H}_k$ -measurable and held fixed over the subsequent transition.
Given $\phi_m$, sample
$$
a_k \mid \mathcal{H}_k \sim \pi^{\phi_m}(\cdot \mid t_k, X_{t_k}, \alpha_{t_k}),
$$
and hold this action fixed until $t_{k+1}$, as in the preceding finite-grid construction. Let the regularized reward estimator $\hat{\ell}_k$ satisfy the same conditional unbiasedness and moment conditions as those established in the offline analysis.

After observing the next state-regime pair, we compute the finite-grid TD increment
\begin{equation}\label{eq:delta_k-m}
\delta_k^{\theta_m, \phi_m}:= J^{\theta_m}(t_{k+1}, X_{t_{k+1}}, \alpha_{t_{k+1}})-J^{\theta_m}(t_k, X_{t_k}, \alpha_{t_k})  +h_k\left[\hat{\ell}_k-\beta J^{\theta_m}(t_k, X_{t_k}, \alpha_{t_k})\right].
\end{equation}

For CTD(0), the single-transition critic direction is
\begin{equation}\label{eq:single-transition-critic-direction} \Delta \theta_{m, k} = \nabla_\theta J^\theta(t_k, X_{t_k}, \alpha_{t_k})\big|_{\theta=\theta_m} \, \delta_k^{\theta_m, \phi_m}.
\end{equation}
The corresponding actor direction is defined by
\begin{align}\label{eq:actor-direction}
\Delta \phi_{m, k}:=e^{-\beta t_k} & \bigg[\Big(\nabla_\phi \log \pi^\phi(a_k \mid t_k, X_{t_k}, \alpha_{t_k})|_{\phi=\phi_m}+\zeta_{t_k}\Big) \delta_k^{\theta_m, \phi_m}  \notag\\
& \quad+\gamma h_k \nabla_\phi p(t_k, X_{t_k}, \alpha_{t_k}, a_k, \pi^\phi(\cdot \mid t_k, X_{t_k}, \alpha_{t_k}))\big|_{\phi=\phi_m}\bigg],
\end{align}
where $\zeta_{t_k}=\zeta(t_k, X_{t_k}, \alpha_{t_k})$ is measurable with respect to $\mathcal{H}_k$ and satisfies the conditions of the preceding actor orthogonality theorem.
The choice $\zeta_{t_k}=0$ recovers the basic actor direction in the offline finite-grid formulation. The factor $e^{-\beta t_k}$ is retained to approximate the discounted policy-gradient representation.
Equations \eqref{eq:delta_k-m}-\eqref{eq:actor-direction} show that both learning directions are obtained from the same newly observed transition. The principal difference from batch learning is therefore the frequency of parameter updates, rather than the definition of the underlying TD increment.

We next clarify how the single-transition directions relate to the continuous-time PE and PG equations. \textbf{The central issue is that the online algorithm uses a sequence of changing policy parameters, whereas the continuous-time policy-gradient theorem and the preceding finite-grid consistency proposition are formulated for a fixed policy.}
\textbf{For this reason, we first consider the local conditional expectations with the pre-update parameters held fixed over the current transition.}

Recall the action-dependent PE residual $H^{\theta, \phi}$ introduced in the offline finite-grid analysis. The local consistency lemma established there gives
\begin{equation}\label{eq:expection-of-delta_k-m}
\mathbb{E}\left[\delta_k^{\theta_m, \phi_m} \mid \mathcal{H}_k, a_k\right]=h_k H^{\theta_m, \phi_m}(t_k, X_{t_k}, \alpha_{t_k}, a_k)+R_{m, k}^{\theta_m, \phi_m},
\end{equation}
where $R_{m, k}^{\theta_m, \phi_m}$ denotes the local consistency remainder, including any additional numerical simulation error. This relation requires the same regularity, filtration, and integrability conditions as the offline local consistency lemma.

The next proposition shows how \eqref{eq:expection-of-delta_k-m} determines the conditional means of the online Critic and Actor directions.

\begin{proposition}\label{pro:Local consistency-of-online-AC-directions}
Suppose that the local consistency relation \eqref{eq:expection-of-delta_k-m} holds and that the required weighted remainder bounds are satisfied. Let $(\theta_m,\phi_m)$ be fixed over the current transition. If the critic is exact for the current policy, namely,
$J^{\theta_m}(t, x, i)=J(t, x, i ; \pi^{\phi_m})$, then
\begin{equation}\label{eq:Delta-theta_mk}
\mathbb{E}\left[\Delta \theta_{m, k} \mid \mathcal{H}_k\right]= \nabla_\theta J^\theta(t_k, X_{t_k}, \alpha_{t_k})|_{\theta=\theta_m} \times \mathbb{E}\left[R_{m, k}^{\theta_m, \phi_m} \mid \mathcal{H}_k\right],
\end{equation}
and
\begin{align}\label{eq:Delta-phi_mk}
\mathbb{E}\left[\Delta \phi_{m, k} \mid \mathcal{H}_k\right]= & h_k e^{-\beta t_k} \int_{\mathcal{A}} \check{r}(t_k, X_{t_k}, \alpha_{t_k}, a ; \phi_m) \pi^{\phi_m}(d a \mid t_k, X_{t_k}, \alpha_{t_k}) \notag\\
& +e^{-\beta t_k} \mathbb{E}\left[\left(\nabla_\phi \log \pi^{\phi_m}(a_k \mid t_k, X_{t_k}, \alpha_{t_k})+\zeta_{t_k}\right) R_{m, k}^{\theta_m, \phi_m} \mid \mathcal{H}_k\right] .
\end{align}
\end{proposition}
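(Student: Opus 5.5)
The plan is to condition first on $(\mathcal{H}_k,a_k)$, then average over $a_k$ given $\mathcal{H}_k$ by the tower property. Two facts drive every step. First, $a_k$ is drawn from $\pi^{\phi_m}(\cdot\mid t_k,X_{t_k},\alpha_{t_k})$ conditionally on $\mathcal{H}_k$. Second, the parameters $(\theta_m,\phi_m)$ and the quantities $\nabla_\theta J^{\theta_m}(t_k,X_{t_k},\alpha_{t_k})$, $\zeta_{t_k}$ and $e^{-\beta t_k}$ are all $\mathcal{H}_k$-measurable. The scores $\nabla_\phi\log\pi^{\phi_m}(a_k\mid\cdot)$ and the regularizer derivative are $\sigma(\mathcal{H}_k,a_k)$-measurable. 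Throughout, the integrability implied by the weighted remainder bounds lets us pull these factors out of the conditional expectations.

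For the critic identity \eqref{eq:Delta-theta_mk}, substitute \eqref{eq:single-transition-critic-direction} and pull $\nabla_\theta J^{\theta_m}(t_k,X_{t_k},\alpha_{t_k})$ outside. Applying \eqref{eq:expection-of-delta_k-m} inside $\mathbb{E}[\,\cdot\mid\mathcal{H}_k,a_k]$ gives $h_k H^{\theta_m,\phi_m}(t_k,X_{t_k},\alpha_{t_k},a_k)+R_{m,k}^{\theta_m,\phi_m}$. Averaging the first term over $a_k$ yields $h_k\int_{\mathcal{A}}H^{\theta_m,\phi_m}(t_k,X_{t_k},\alpha_{t_k},a)\,\pi^{\phi_m}(da\mid t_k,X_{t_k},\alpha_{t_k})$. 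Since the critic is exact for $\pi^{\phi_m}$, this integral vanishes by the policy-averaged PE identity \eqref{eq:averaging-PE-equation}, which holds pointwise in $(t,x,i)$. Only $\nabla_\theta J^{\theta_m}\,\mathbb{E}[R_{m,k}^{\theta_m,\phi_m}\mid\mathcal{H}_k]$ remains.

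For the actor identity \eqref{eq:Delta-phi_mk}, substitute \eqref{eq:actor-direction} and condition on $(\mathcal{H}_k,a_k)$. The factor $\nabla_\phi\log\pi^{\phi_m}(a_k\mid\cdot)+\zeta_{t_k}$ can be pulled out, and \eqref{eq:expection-of-delta_k-m} applied again. This gives three pieces:
\begin{itemize}
\item $h_k e^{-\beta t_k}\nabla_\phi\log\pi^{\phi_m}(a_k\mid\cdot)\,H^{\theta_m,\phi_m}(\cdot,a_k)$;
\item $h_k e^{-\beta t_k}\zeta_{t_k}H^{\theta_m,\phi_m}(\cdot,a_k)$;
\item the remainder term $e^{-\beta t_k}\bigl(\nabla_\phi\log\pi^{\phi_m}(a_k\mid\cdot)+\zeta_{t_k}\bigr)R_{m,k}^{\theta_m,\phi_m}$.
\end{itemize}
The regularizer term $\gamma h_k e^{-\beta t_k}\nabla_\phi p$ is kept alongside the first piece.

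Now average over $a_k$ given $\mathcal{H}_k$. The $\zeta$-piece vanishes by \eqref{eq:averaging-PE-equation}, since $\zeta_{t_k}$ does not depend on the action. The first piece together with the regularizer term integrates to $h_k e^{-\beta t_k}\int_{\mathcal{A}}\check r(t_k,X_{t_k},\alpha_{t_k},a;\phi_m)\,\pi^{\phi_m}(da\mid\cdot)$. This uses the identification $\check r=\nabla_\phi\log\pi^\phi\,H^{\theta^*,\phi}+\gamma\nabla_\phi p$ recalled in the proof of the finite-grid consistency proposition, with $J^{\theta_m}=J^{\phi_m}$. The remainder piece is left as a conditional expectation given $\mathcal{H}_k$, which is exactly the second line of \eqref{eq:Delta-phi_mk}.

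The main obstacle is bookkeeping rather than analysis. It consists of justifying that each factor is measurable at the right level, so that it can be pulled out, and that the products are integrable, so that the tower property applies. The remainder $R_{m,k}$ is \emph{not} $\mathcal{H}_k$-measurable and must stay inside the conditional expectation, multiplied by the action-dependent score. I would invoke the assumed weighted remainder bounds to guarantee $\mathbb{E}\bigl[\|\nabla_\phi\log\pi^{\phi_m}\|\,|R_{m,k}|\bigr]<\infty$, and the standing integrability of $\check r$ to guarantee that the score-weighted averages of $H$ are finite.
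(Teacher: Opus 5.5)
Your proposal is correct and follows essentially the same route as the paper. You condition on $(\mathcal{H}_k,a_k)$ to apply the local consistency relation, then average over $a_k\sim\pi^{\phi_m}(\cdot\mid t_k,X_{t_k},\alpha_{t_k})$, using the policy-averaged PE identity to remove the leading critic term and the $\zeta_{t_k}$-term and the definition of $\check r$ to identify what remains, while your remark that $R_{m,k}^{\theta_m,\phi_m}$ is only $\sigma(\mathcal{H}_k,a_k)$-measurable makes explicit what the paper leaves implicit.
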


\begin{proof}
We first prove \eqref{eq:Delta-theta_mk}.
Since the critic parameter gradient is $\mathcal{H}_k$-measurable, applying the tower property and the local consistency relation \eqref{eq:expection-of-delta_k-m} yields
\begin{align*}
 \mathbb{E}[\Delta \theta_{m, k} \mid \mathcal{H}_k]
& =\nabla_\theta J^\theta(t_k, X_{t_k}, \alpha_{t_k})\Big|_{\theta=\theta_m} \mathbb{E}\left[\delta_k^{\theta_m, \phi_m} \Big| \mathcal{H}_k\right] \\
& =\nabla_\theta J^\theta(t_k, X_{t_k}, \alpha_{t_k})\Big|_{\theta=\theta_m}\bigg\{h_{k} \int_{\mathcal{ A } } \Big[(\mathcal{L}^a J^{\theta_m})(t_k, X_{t_k}, \alpha_{t_k})+\ell^{\phi_m}(t_k, X_{t_k}, \alpha_{t_k}, a) \\
& \quad \quad-\beta J^{\theta_m}(t_k, X_{t_k}, \alpha_{t_k})\Big] \pi^{\phi_m}(d a \mid t_k, X_{t_k}, \alpha_{t_k})+\mathbb{E}\left[R_{m, k}^{\theta_m, \phi_m} \mid \mathcal{H}_k\right]\bigg\}.
\end{align*}
Since $J^{\theta_m}(\cdot, \cdot, \cdot)=J(\cdot, \cdot, \cdot ; \pi^{\phi_m})$, the policy-averaged term vanishes by the PE equation \eqref{eq:PE-for-J-theta(phi)}. Therefore,
$$
\mathbb{E}[\Delta \theta_{m, k} \mid \mathcal{H}_k]=\nabla_\theta J^\theta(t_k, X_{t_k}, \alpha_{t_k})\Big|_{\theta=\theta_m} \mathbb{E}\left[R_{m, k}^{\theta_m, \phi_m} \mid \mathcal{H}_k\right],
$$
which proves \eqref{eq:Delta-theta_mk}.

We next establish \eqref{eq:Delta-phi_mk}. Unlike the critic parameter gradient, the policy score depends on the sampled action $a_k$. We therefore apply \eqref{eq:expection-of-delta_k-m} before averaging over the conditional action distribution. Using the definition of the actor direction in \eqref{eq:actor-direction}, we obtain
\begin{align*}
\mathbb{E}[\Delta \phi_{m, k} \mid \mathcal{H}_k]
= & h_k e^{-\beta t_k} \int_{\mathcal{A}}\bigg\{\Big[\nabla_\phi \log \pi^{\phi_m}(a \mid t_k, X_{t_k}, \alpha_{t_k})+\zeta_{t_k}\Big] \\
& \times\Big[(\mathcal{L}^a J^{\theta_m})(t_k, X_{t_k}, \alpha_{t_k})+\ell^{\phi_m}(t_k, X_{t_k}, \alpha_{t_k}, a)-\beta J^{\theta_m}(t_k, X_{t_k}, \alpha_{t_k})\Big] \\
& +\gamma \nabla_\phi p(t_k, X_{t_k}, \alpha_{t_k}, a, \pi^\phi(\cdot \mid t_k, X_{t_k}, \alpha_{t_k}))\Big|_{\phi=\phi_m}\bigg\} \pi^{\phi_m}(d a \mid t_k, X_{t_k}, \alpha_{t_k}) \\
& +e^{-\beta t_k} \mathbb{E}\Big[(\nabla_\phi \log \pi^{\phi_m}(a_k \mid t_k, X_{t_k}, \alpha_{t_k})+\zeta_{t_k}) R_{m, k}^{\theta_m, \phi_m} \mid \mathcal{H}_k\Big] .
\end{align*}
Since $\zeta_{t_k}$ is $\mathcal{H}_k$-measurable and independent of the current action, the PE equation \eqref{eq:PE-for-J-theta(phi)} implies
\begin{align*}
\int_{\mathcal{A}} \zeta_{t_k}\Big[ (\mathcal{L}^a J^{\theta_m})(t_k, X_{t_k}, \alpha_{t_k})+\ell^{\phi_m}(t_k, X_{t_k}, \alpha_{t_k}, a) -\beta J^{\theta_m}(t_k, X_{t_k}, \alpha_{t_k})\Big] \pi^{\phi_m}(d a \mid t_k, X_{t_k}, \alpha_{t_k})=0 .
\end{align*}
Thus, the contribution of $\zeta_{t_k}$ to the leading-order term vanishes. By the definition of the auxiliary reward $\check{r}$ in the preceding policy-gradient theorem, the remaining score-weighted PE residual and regularizer derivative combine to give
\begin{align*}
\mathbb{E}[\Delta \phi_{m, k} \mid \mathcal{H}_k]= & h_k e^{-\beta t_k} \int_{\mathcal{A}} \check{r}(t_k, X_{t_k}, \alpha_{t_k}, a ; \phi_m) \pi^{\phi_m}(d a | t_k, X_{t_k}, \alpha_{t_k}) \\
& +e^{-\beta t_k} \mathbb{E}\Big[(\nabla_\phi \log \pi^{\phi_m}(a_k | t_k, X_{t_k}, \alpha_{t_k})+\zeta_{t_k}) R_{m, k}^{\theta_m, \phi_m} | \mathcal{H}_k\Big],
\end{align*}
which is precisely \eqref{eq:Delta-phi_mk}.
\end{proof}

The proposition identifies the difference between the local Critic and Actor conditions. At the exact critic, the leading term in the conditional mean of the Critic direction vanishes, whereas the corresponding Actor direction generally has a nonzero leading term proportional to the policy-averaged auxiliary reward.
In particular, even if $\phi^*$ is an interior stationary policy satisfying
$\nabla_\phi J(0, x, i ; \pi^{\phi^*})=0$,
it does not follow that
$$
\int_{\mathcal{A}} \check{r}(t, x, i, a ; \phi^*) \pi^{\phi^*}(d a \mid t, x, i)=0,
$$
at every $(t, x, i)$. The stationary-policy condition concerns the accumulated expected policy gradient over the horizon, rather than the conditional expectation of each individual actor increment.
This distinction is essential for online learning: \textbf{a single actor increment represents a local contribution to the policy-gradient estimate, not an unbiased estimate of the entire policy gradient.}

For a fixed parameter pair $(\theta,\phi)$, summing the discrete directions over the complete horizon $T$ and applying the occupation and discretization consistency results from the offline analysis gives, at the exact critic,
\begin{equation}\label{eq:convergence-of-Delta theta-k-and-phi-k}
 \mathbb{E}_{0, x, i}^{\phi, \mathcal{P}}\left[\sum_{k=0}^{N-1} \Delta \theta_k\right] \rightarrow 0, \quad
 \mathbb{E}_{0, x, i}^{\phi, \mathcal{P}}\left[\sum_{k=0}^{N-1} \Delta \phi_k\right] \rightarrow \nabla_\phi J\left(0, x, i ; \pi^\phi\right), \quad
|\mathcal{P}| \rightarrow 0 .
\end{equation}
The auxiliary terms involving $\zeta_{t_k}$ make no contribution to the limiting expected actor direction, provided their weighted local consistency errors vanish after summation. Thus, \eqref{eq:convergence-of-Delta theta-k-and-phi-k} follows from the existing offline consistency theory together with the additional PE orthogonality established in \eqref{eq:PE-orthogonality-condtion}; it does not require a separate proof of the full policy-gradient representation.
However, \eqref{eq:convergence-of-Delta theta-k-and-phi-k} assumes that the same policy parameter is used throughout the trajectory. It therefore cannot be applied directly to an online recursion in which the parameters are updated after every transition. For such a recursion, the local directions remain well defined, but their accumulated mean must also account for the variation of the critic and actor parameters during learning.

We now use \eqref{eq:single-transition-critic-direction} and \eqref{eq:actor-direction} to construct an online Actor-Critic recursion. Since the actor direction depends on the critic approximation, we adopt a two-time-scale stochastic approximation scheme in which the critic evolves faster than the actor.

Let $m$ denote the global update index across episodes and transitions. After observing the $m$-th transition and computing both estimating directions using the same pre-update parameter pair $(\theta_m, \phi_m)$, update
\begin{equation}\label{eq:online-update}
\theta_{m+1}  =\Pi_{\Theta}\left[\theta_m+\eta_{\theta, m} \Delta \theta_{m, k}\right],  \quad
\phi_{m+1}  =\Pi_{\Phi}\left[\phi_m+\eta_{\phi, m} \Delta \phi_{m, k}\right],
\end{equation}
where $\eta_{\theta, m}, \eta_{\phi, m}>0$ are the learning rates, and $\Pi_{\Theta}, \Pi_{\Phi}$ denote projections onto the prescribed parameter sets.

\begin{algorithm}[htbp]
\caption{Online--Incremental Regime-Switching Actor--Critic Algorithm}
\textbf{Inputs:}
Initial state-regime pair $(x,i)$; partition $\mathcal{P}$, $h_k$; initial parameters $(\theta_0,\phi_0)$; number of episodes $K_{\mathrm{ep}}$; learning-rate sequences $\{\eta_{\theta,m}\}$ and $\{\eta_{\phi,m}\}$; functional form of the critic $J^\theta(t,x,i)$; functional form of the policy $\pi^\phi(\cdot\mid t,x,i)$; regularizer $p(t,x,i,a,\pi^\phi(\cdot\mid t,x,i))$; predictable auxiliary process $\zeta_{t_k}$, with the default choice $\zeta_{t_k}=0$; temperature parameter $\gamma$; discount factor $\beta$; projection sets $\Theta$ and $\Phi$.\\[1ex]
\textbf{Required program:}
A regime-switching environment simulator
$(X',I',\widehat{\ell}) = \operatorname{Environment}_{h} (t,X,I,a),$
which generates the next state, next regime, and observed regularized reward rate following the current action.\\[1ex]
\textbf{Learning procedure:}
\begin{algorithmic}[1]
\State Initialize $(\theta,\phi)\leftarrow(\theta_0,\phi_0), \quad m\leftarrow0$.
\For{episode $n=1,\ldots,K_{\mathrm{ep}}$}
    \State Reset $X_{t_0}\leftarrow x, \quad \alpha_{t_0}\leftarrow i$.
    \For{$k=0,\ldots,N-1$}
        \State Freeze the pre-transition parameters $(\theta_m,\phi_m)$.
        \State Compute the predictable critic test function
        $$
        \xi_{t_k} = \nabla_\theta J^{\theta_m} (t_k,X_{t_k},\alpha_{t_k}),
        $$
        \State and, if used, compute $\zeta_{t_k}$ from information available before sampling the current action.
        \State Sample $a_k \sim \pi^{\phi_m} (\cdot\mid t_k,X_{t_k},\alpha_{t_k})$.
        \State Apply $a_k$ over $[t_k,t_{k+1})$ and observe $( X_{t_{k+1}}, \alpha_{t_{k+1}}, \widehat{\ell}_k)$.
        \State Compute the first-order PE increment
        \begin{align*}
        \delta_k^{\theta_m,\phi_m} = J^{\theta_m} (t_{k+1},X_{t_{k+1}},\alpha_{t_{k+1}}) - J^{\theta_m} (t_k,X_{t_k},\alpha_{t_k}) + h_k \left[ \widehat{\ell}_k - \beta J^{\theta_m} (t_k,X_{t_k},\alpha_{t_k}) \right].
        \end{align*}
        \State Compute the critic direction
        $$
        \Delta\theta_{m,k} = \xi_{t_k} \delta_k^{\theta_m,\phi_m}.
        $$
        \State Compute
        $$
        P_k^{\phi_m} =\nabla_\phi p( t_k, X_{t_k}, \alpha_{t_k}, a_k, \pi^\phi(\cdot\mid t_k,X_{t_k},\alpha_{t_k}) )|_{\phi=\phi_m},
        $$
        \State and the online actor direction
        \begin{align*}
        \Delta\phi_{m,k} = e^{-\beta t_k} \bigg[( \nabla_\phi \log \pi^{\phi_m} (a_k\mid t_k,X_{t_k},\alpha_{t_k}) + \zeta_{t_k} ) \delta_k^{\theta_m,\phi_m} + \gamma h_k P_k^{\phi_m} \bigg].
        \end{align*}
        \State Using the same pre-transition pair $(\theta_m,\phi_m)$ in both directions, update
        $$
        \theta_{m+1} = \Pi_\Theta \left[ \theta_m + \eta_{\theta,m} \Delta\theta_{m,k} \right], \qquad \phi_{m+1} = \Pi_\Phi \left[ \phi_m + \eta_{\phi,m} \Delta\phi_{m,k} \right].
        $$
        \State Set $m \leftarrow m+1$ and proceed to the next transition.
    \EndFor
\EndFor
\State \textbf{Return} the final actor and critic parameters $(\theta_m,\phi_m)$.
\end{algorithmic}
\end{algorithm}

\section{Applications and Numerical Experiments: LQ problem with regime-switching}
\label{sec:Applications-and-Numerical-Experiments}

We conduct two numerical experiments to examine the theoretical results established in the preceding sections. The first experiment considers an analytically tractable two-regime linear-quadratic (LQ) problem to assess the accuracy of martingale-based policy evaluation, trajectory-based policy-gradient estimation, and actor learning. The second experiment introduces a regime-dependent critic perturbation to verify the additional regime-switching contribution to the MSTDE decomposition. Throughout this section, the discount factor is set to $\beta=0$.

\subsection{A two-regime LQ benchmark}

Consider a finite-horizon LQ problem with $T=1$, governed by
\begin{equation*}
d X_t=\left(A_i X_t+B_i U_t\right) d t+\sigma_i d W_t, \quad
X_0=x_0,\quad \alpha_0=i_0,
\end{equation*}
where the regime process $\alpha=\{\alpha_t\}_{t \in[0, T]}$ takes values in $\mathcal{M}=\{1,2\}$ and has generator
$Q=\left(\begin{array}{cc}
-0.8 & 0.8 \\
0.5 & -0.5
\end{array}\right) .$
The model parameters are specified as follows:
 $(A_1,A_2)=(0.2,-0.1),
(B_1,B_2)=(1.0,0.8)$ and $(\sigma_1,\sigma_2)=(0.35,0.50).$
The running and terminal cost parameters are
$(Q_1, Q_2)=(1.0,1.5), (R_1, R_2)=(0.5,0.8),$
and
$(M_1, M_2)=(1.0,1.2) .$
We consider the entropy-regularized quadratic cost minimization problem with regularization parameter $\lambda=0.2$.
The optimal value function and exploratory Gaussian policy admit the representations
$$
J_i^*(t, x)=P_i(t) x^2+C_i(t),  \quad \pi_i^*(\cdot \mid t, x)=\mathcal{N}\left(-\frac{B_i P_i(t)}{R_i} x, \frac{\lambda}{2 R_i}\right),
$$
respectively, where $P_i$ and $C_i$ are determined by the corresponding coupled Riccati equations. Numerical solution yields $J^*(0,1,1) \approx 1.04727$.
Hence the optimal mean-feedback gains and exploration variances are
$K_1^*(t)=2 P_1(t), K_2^*(t)=P_2(t),$
and
$v_1^*=0.2, v_2^*=0.125$.

{\it Policy Evaluation.} These analytical results provide independent benchmarks for the numerical experiments. We first fix a nonoptimal Gaussian policy,
$$
\pi^\phi(\cdot \mid x, i)=\mathcal{N}\left(-K_i x, v_i\right),
$$
with $K_1=0.6, K_2=0.4,$
and $v_1=0.30, v_2=0.20 .$
Equivalently,
$\phi_{\mathrm{PE}}=(0.6,0.4, \log 0.30, \log 0.20)$.

Under this policy, the exact value function remains quadratic,
$$
J_i^\phi(t, x)=P_i^\phi(t) x^2+C_i^\phi(t).
$$
The corresponding policy-evaluation equations become
$$
\left\{\begin{array}{l}
\dot{P}_i^\phi(t)+2(A_i-B_i K_i) P_i^\phi(t)+Q_i+R_i K_i^2+\sum_{j=1}^2 q_{i j} P_j^\phi(t)=0, \\
P_1^\phi(T)=M_i,
\end{array}\right.
$$
and
$$
\left\{\begin{array}{l}
\dot{C}_i^\phi(t)+\sigma_i^2 P_i^\phi(t)+R_i v_i-\frac{\lambda}{2}\left[\log \left(2 \pi v_i\right)+1\right]+\sum_{j=1}^2 q_{i j} C_j^\phi(t)=0, \\
C_i^\phi(T)=0.
\end{array}\right.
$$
Solving these coupled equations gives, $J^{\phi_{\mathrm{PE}}}(0,1,1) \approx 1.56887$. This value serves as the theoretical reference for policy evaluation under the prescribed policy.

State-regime-action trajectories are generated using the finite-grid scheme developed earlier. The baseline configuration uses $N=200$ time steps, corresponding to $h=0.005$, and $M_{\text {critic }}=5000$ independent trajectories for critic training. To examine sampling and discretization effects, we additionally consider
$N \in\{50,100,200,400\}$,
and $M_{\text {critic }} \in\{500,1000,2000,5000,10000\}$.
The critic accuracy is assessed on a fixed time-state-regime evaluation grid using the relative error
$$
E_J=\left[\frac{\sum_{i, k, l}\left|J_i^\theta\left(t_k, x_l\right)-J_i^\phi\left(t_k, x_l\right)\right|^2}{\sum_{i, k, l}\left|J_i^\phi\left(t_k, x_l\right)\right|^2}\right]^{1 / 2} .
$$
The evaluation grid consists of 51 time points on $[0,1]$ and 101 equally spaced state points on $[-2.5,2.5]$.

Figure~\ref{fig:exp1-critic-convergence} compares the learned critic with the analytical value function of the fixed policy, examining whether martingale-based policy evaluation accurately recovers the corresponding value function. In particular, the consistency target is
$$
J^\theta \longrightarrow J^{\phi_{\mathrm{PE}}},
$$
rather than convergence to the optimal value function $J^*$.

\begin{figure}[!htbp]
    \centering
    \includegraphics[width=1.0\textwidth]{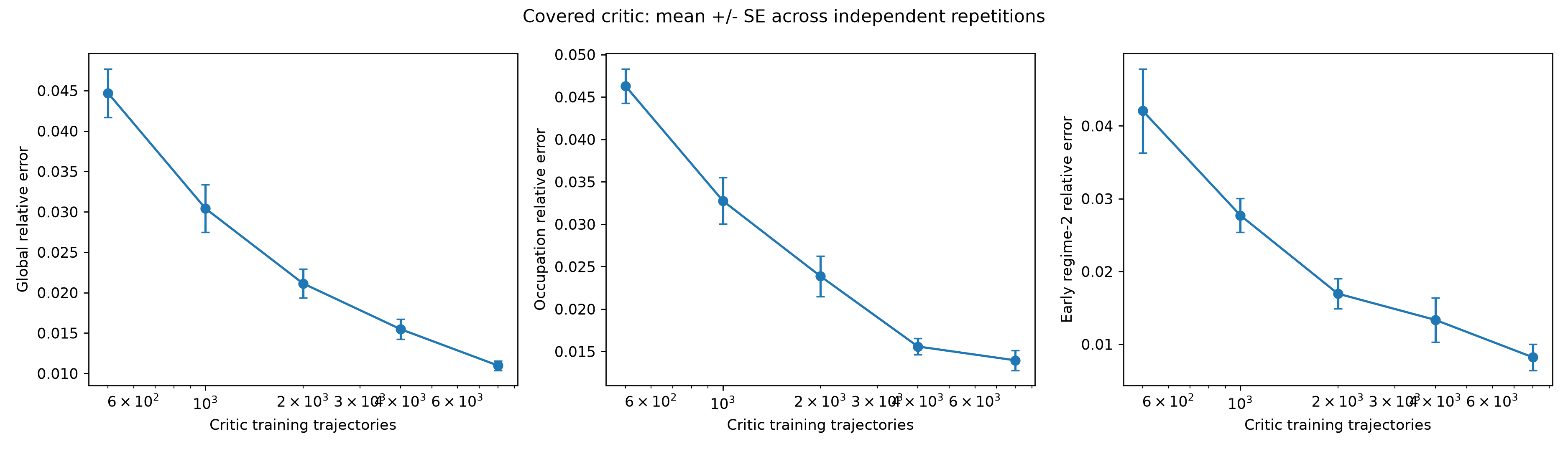}
    \caption{Convergence of the critic toward the analytical value function.}
    \label{fig:exp1-critic-convergence}
\end{figure}

{\it Policy-Gradient Estimation.} For the same prescribed policy $\phi_{\mathrm{PE}}$, the exact policy gradient is computed independently by differentiating the coupled policy-evaluation equations with respect to the actor parameters:
$$
\nabla_\phi J^{\text {true }}=\nabla_\phi J^{\phi_{\mathrm{PE}}}(0,1,1) .
$$
This reference is compared with the trajectory-based policy-gradient estimator. The relative error is defined as
$$
E_{\mathrm{PG}}=\frac{\left\|\widehat{\nabla_\phi J}-\nabla_\phi J^{\text {true }}\right\|}{1+\left\|\nabla_\phi J^{\text {true }}\right\|} .
$$
The baseline experiment uses $M_{\text {actor }}=5000$ independent trajectories. We further consider
$$
M_{\text {actor }} \in\{500,1000,2000,5000,10000\}
$$
to examine Monte Carlo accuracy.
To distinguish the sources of approximation error, the experiment is conducted in two stages. First, the exact value function $J^\phi$ is used in the trajectory-based PG estimator to assess Monte Carlo and time-discretization errors. Second, the exact value function is replaced by the learned critic $J^\theta$, allowing us to examine the additional error introduced by critic approximation. Independent trajectory samples are used for critic fitting and actor evaluation.

Figure~\ref{fig:exp1-policy-gradient} compares the trajectory-based estimate with the theoretical gradient, with the objective of verifying $\widehat{\nabla_\phi J} \rightarrow \nabla_\phi J^{\phi_{\mathrm{PE}}}$.

\begin{figure}[!htbp]
    \centering
    \includegraphics[width=1.0\textwidth]{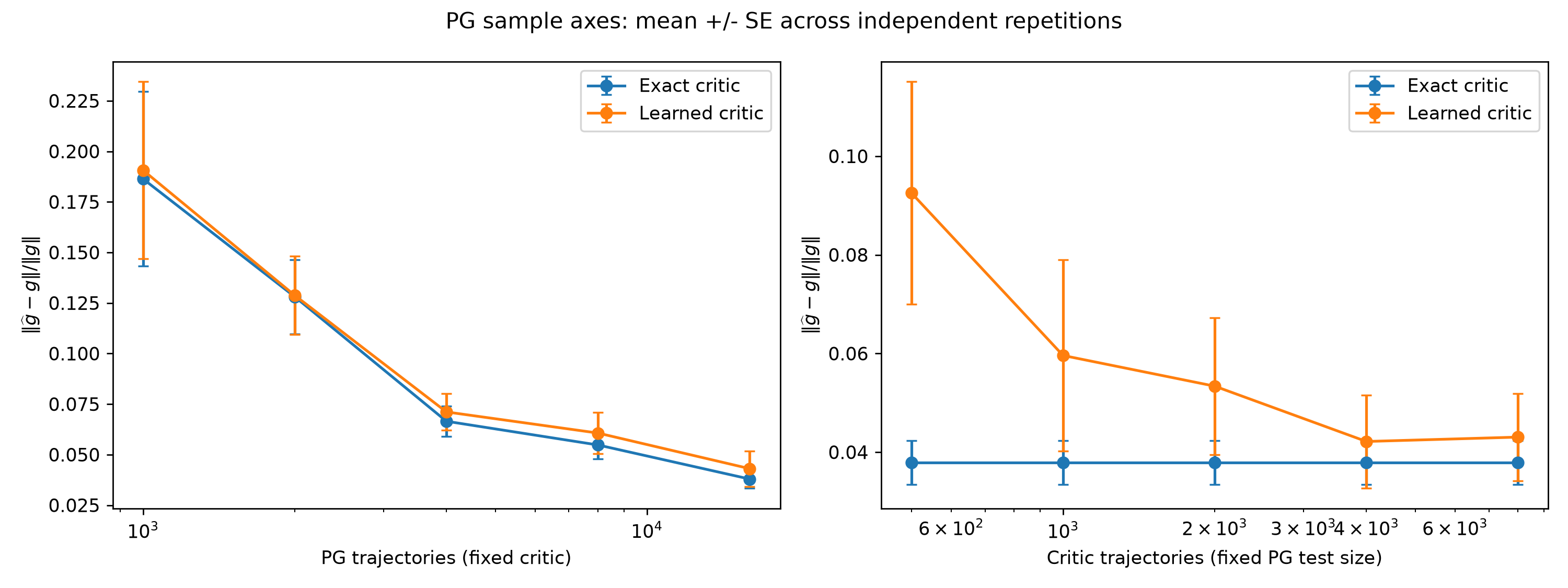}
    \caption{Comparison between the estimated and analytical policy gradients.}
    \label{fig:exp1-policy-gradient}
\end{figure}

{\it Actor Convergence.} The third test examines policy improvement through actor learning.
Under the cost-minimization convention, the actor parameters are updated according to
$$
\phi_{n+1}=\Pi_{\Phi}\left[\phi_n-\eta_{\phi, n} \widehat{\nabla_\phi J}\left(\phi_n\right)\right],
$$
where
$\phi_0 =(0.4,0.3, \log 0.35, \log 0.30), \Phi=[0,3]^2 \times[\log 0.02, \log 1]^2
$,
and the learning rate is $\eta_{\phi, n}=\frac{0.05}{(1+n / 20)^{0.6}}$.

The experiment is conducted for 150 actor iterations. At each iteration, 2000 independent trajectories are used for critic fitting, and another independent batch of 2000 trajectories is used for PG estimation.
Since the Gaussian actor parameters are time-independent, whereas the unrestricted optimal feedback gains are generally time-dependent, the appropriate benchmark is the optimizer within the static Gaussian policy class:
$$
\phi_{\text {stat }}^*=\arg \min _{\phi \in \Phi} J^\phi(0,1,1) .
$$
Independent numerical optimization yields
$$
\phi_{\text {stat }}^* \approx(1.80676,1.19329,-1.60944,-2.07944), \quad
J^{\phi_{\text {stat }}^*}(0,1,1) \approx 1.04741 .
$$
Accordingly, the actor experiment focuses on parameter convergence and the objective gap:
$$
\phi_n \longrightarrow \phi_{\text {stat }}^*, \quad J^{\phi_n}(0,1,1)-J^{\phi_{\text {stat }}^*}(0,1,1) \rightarrow 0.
$$
Figure~\ref{fig:exp1-actor-gain}, ~\ref{fig:exp1-actor-variance}, and ~\ref{fig:exp1-actor-objective-gap} are combined into a single figure with three panels, showing the evolution of the feedback gains, exploration variances, and objective gap, respectively. These comparisons jointly assess the mean and variance components of the Gaussian policy.

Furthermore, Figure~\ref{fig:exp1-mesh-refinement} examines whether the discrepancy between the finite-grid PE–PG estimates and their continuous-time theoretical references decreases as the time mesh is refined.

Taken together, Experiment 1 examines the accuracy of policy evaluation, policy-gradient estimation, and actor improvement, while the mesh-refinement analysis assesses the consistency of the finite-grid implementation with the continuous-time framework.

\begin{figure}[!htbp]
    \centering
    \includegraphics[width=0.6\textwidth]{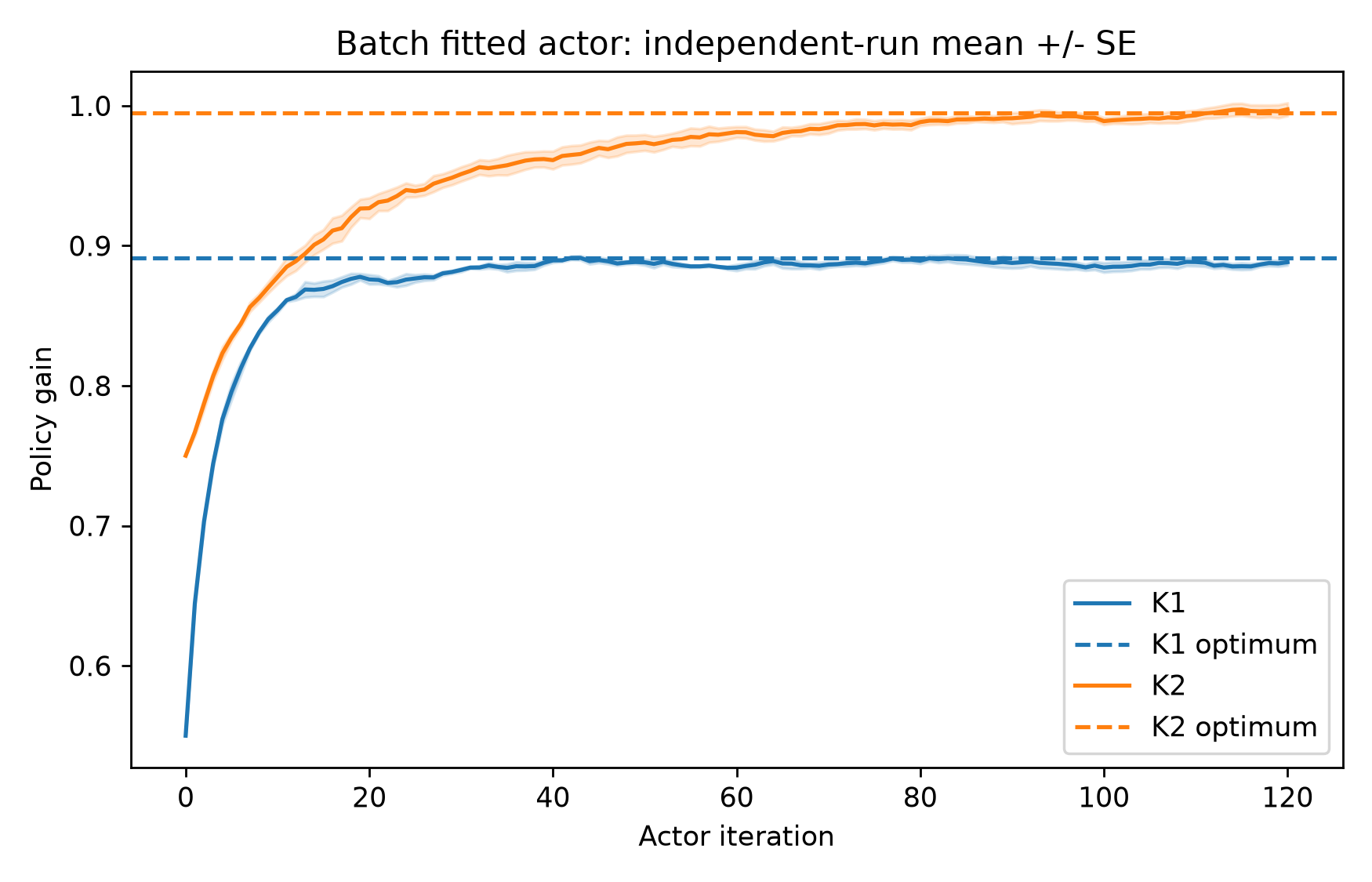}
    \caption{Convergence of the estimated actor gain toward its theoretical benchmark.}
    \label{fig:exp1-actor-gain}
\end{figure}

\begin{figure}[!htbp]
    \centering
    \includegraphics[width=0.6\textwidth]{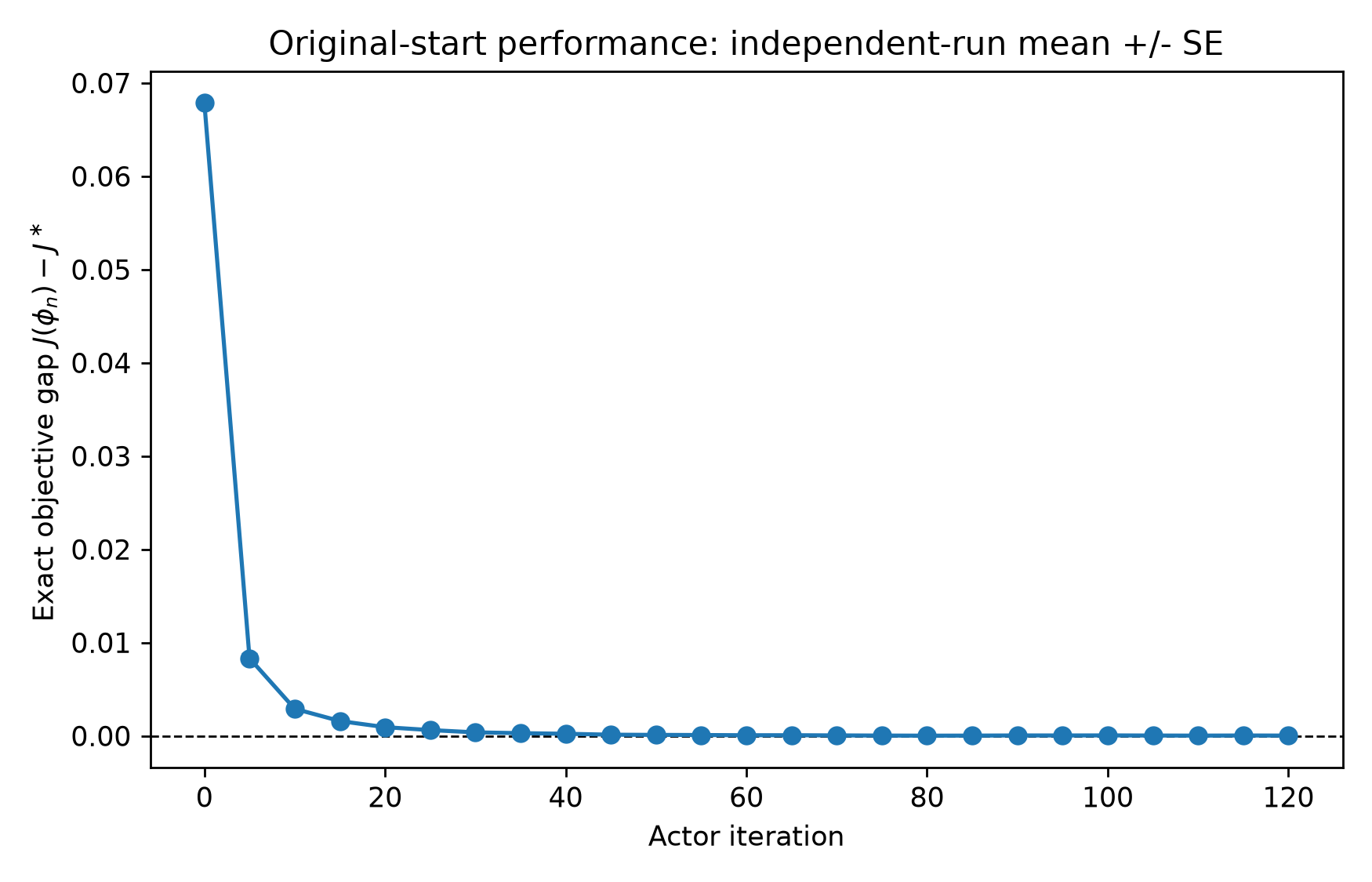}
    \caption{Evolution of the actor objective gap during learning.}
    \label{fig:exp1-actor-objective-gap}
\end{figure}

\begin{figure}[!htbp]
    \centering
    \includegraphics[
        width=0.60\textwidth,
        height=0.27\textheight,
        keepaspectratio
    ]{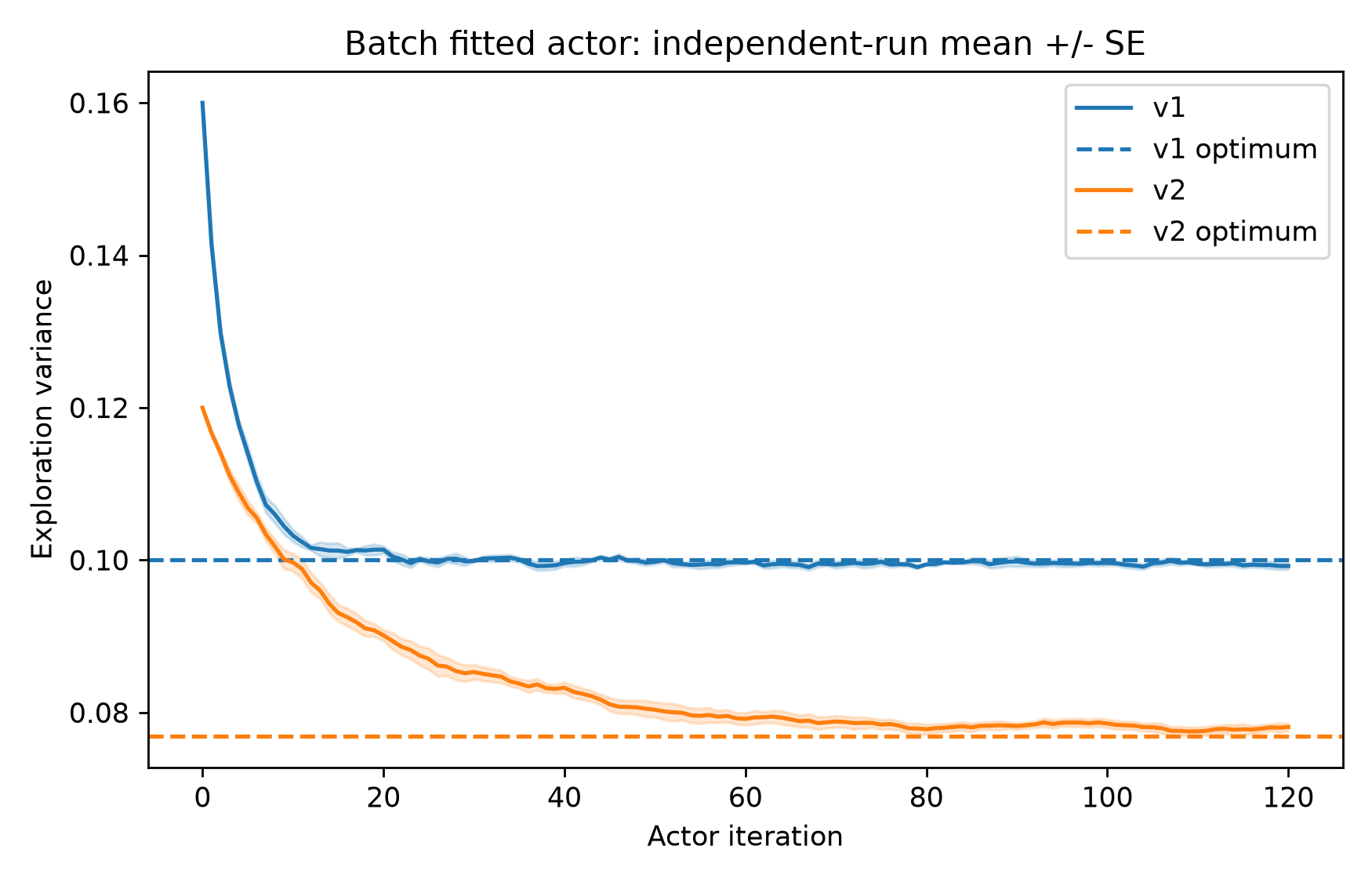}
    \caption{Evolution of the Gaussian exploration variance during actor learning.}
    \label{fig:exp1-actor-variance}

    \vspace{4pt}

    \includegraphics[
        width=0.85\textwidth,
        height=0.32\textheight,
        keepaspectratio
    ]{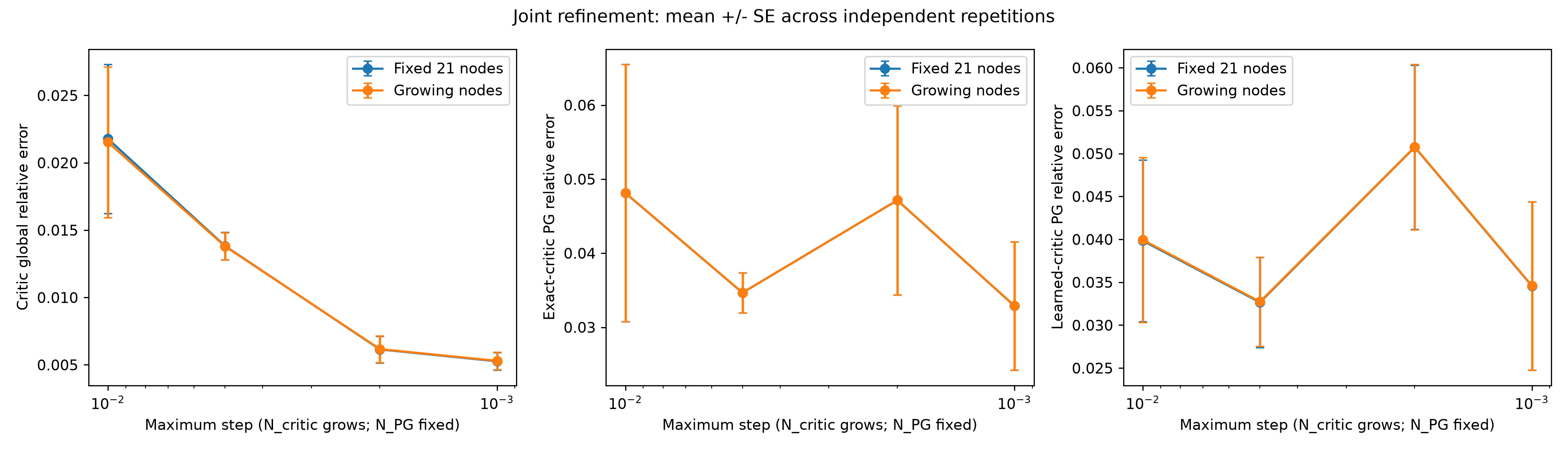}
    \caption{Mesh-refinement analysis of the finite-grid actor-critic approximation.}
    \label{fig:exp1-mesh-refinement}
\end{figure}

\subsection{Verification of the MSTDE decomposition}

The second experiment examines the additional regime-switching contribution to the MSTDE decomposition, given by
$\sum_{j \neq i} q_{i j}(J_j-J_i)^2$.
To isolate the effect of regime switching, we consider a regime-independent state process
\begin{equation}\label{eq:state-dynamic-of-MSTDE}
d X_t=-0.2 X_t d t+0.4 d W_t, \quad X_0=1, \quad t \in[0,1],
\end{equation}
together with a symmetric two-state Markov chain with generator
$Q=\left(\begin{array}{cc}
-\kappa & \kappa \\
\kappa & -\kappa
\end{array}\right) .$
The corresponding true value function is
$$
J(t, x)=\mathbb{E}_{t, x}\left[\int_t^1 X_s^2 d s+X_1^2\right]=P(t)x^2+C(t),
$$
where
$P(t)=2.5-1.5 e^{-0.4(1-t)}, C(t)=0.4(1-t)-0.6[1-e^{-0.4(1-t)}].$
Since the state dynamics and cost functions are independent of the regime, the true value function satisfies $J_1(t, x)=J_2(t, x)=J(t, x)$.

To isolate the regime-switching contribution to the MSTDE while preserving the terminal condition, we introduce the perturbed critic
$$
J_1^d(t, x)=J(t, x)+\frac{1-t}{2}, \quad J_2^d(t, x)=J(t, x)-\frac{1-t}{2} .
$$
Consequently,
$$
J_1^d(t, x)-J_2^d(t, x)=1-t .
$$
By the quadratic-variation decomposition established in the preceding MSTDE analysis, the limiting mean-square TD error is determined by the expected predictable quadratic variation of the martingale component of the corresponding semimartingale:
$$
\operatorname{MSTDE}(\kappa)=\mathcal{V}_W+\mathcal{V}_Q .
$$
Since the perturbation is independent of the state variable, the Brownian contribution is
\begin{align*}
\mathcal{V}_W&=0.64 \int_0^1 P^2(t) \mathbb{E}\left[X_t^2\right] d t\\
&=0.64 \int_0^1\left[2.5-1.5 e^{-0.4(1-t)}\right]^2\left[0.4+0.6 e^{-0.4 t}\right] d t \\
&\approx 0.93892.
\end{align*}
wheres the regime-switching contribution is
\begin{align*}
\mathcal{V}_Q =\kappa \int_0^1\left|J_1^d(t, x)-J_2^d(t, x)\right|^2 d t =\kappa \int_0^1(1-t)^2 d t=\frac{\kappa}{3}
\end{align*}
Therefore, the theoretical MSTDE benchmark is
$$
\operatorname{MSTDE}(\kappa)=0.93892+\frac{\kappa}{3} .
$$
This expression provides the principal prediction of Experiment 2. As the switching intensity $\kappa$ increases, the Brownian contribution remains unchanged, whereas the additional jump contribution increases linearly with slope $1 / 3$.

\textbf{Numerical implementation.}
We consider the switching intensities
$\kappa \in\{0,0.25,0.5,1,2,4,8\}$.
The baseline experiment uses $N=200$ time steps with $h=0.005$, and $M=20000$ independent trajectories are generated for each value of $\kappa$. To assess numerical robustness, we additionally consider
$N \in\{50,100,200,400\}, M \in\{2000,5000,10000,20000\}$.
The state process is simulated using the Euler-Maruyama scheme, while the regime process is generated using the exact transition matrix $e^{\mathcal{Q}_\kappa h}$, consistent with the original experimental configuration.

Using the simulated trajectories, the Brownian and predictable regime-switching contributions are estimated by
$$
\widehat{\mathcal{V}}_W  =\frac{1}{M} \sum_{m=1}^M \sum_{k=0}^{N-1} 0.64 P^2(t_k) X_{m, k}^2 h, \quad
\widehat{\mathcal{V}}_Q^{\mathrm{pred}} =\sum_{k=0}^{N-1} \kappa(1-t_k)^2 h .
$$
To further examine the MSTDE decomposition, we also compute the finite-grid TD increment associated with the perturbed critic from the same simulated trajectories:
$$
\delta_{m, k}^d= J_{\alpha_{m, t_{k+1}}^d}^d(t_{k+1}, X_{m, t_{k+1}})-J_{\alpha_{m, t_k}}^d(t_k, X_{m, t_k}) +h X_{m, t_k}^2 .
$$
The empirical squared TD statistic is then defined by
$$
\overline{\operatorname{MSTDE}}(\kappa)=\frac{1}{M} \sum_{m=1}^M \sum_{k=0}^{N-1}\left|\delta_{m, k}^d\right|^2 .
$$
This statistic can be computed directly from the simulated data without modifying the model or sampling parameters.
The numerical experiment primarily examines whether
$$
\overline{\operatorname{MSTDE}}(\kappa) \approx \widehat{\mathcal{V}}_W+\widehat{\mathcal{V}}_Q^{\text {pred }} \approx 0.93892+\frac{\kappa}{3} .
$$
Figure~\ref{fig:exp2-mstde-decomposition} illustrates the Brownian and jump contributions under different switching intensities. In particular, it examines whether the Brownian contribution remains approximately constant and whether the additional regime-switching contribution follows the predicted linear relationship with slope $1 / 3$.

Figure~\ref{fig:exp2-paired-mesh-refinement} examines whether the discrepancy between the empirical MSTDE and its theoretical quadratic-variation decomposition decreases as the time grid is refined.

By keeping the state dynamics and the true value function unchanged while varying only the switching intensity, this experiment separates the additional jump quadratic variation from the Brownian contribution. It therefore provides a direct numerical examination of the regime-switching MSTDE decomposition established in the preceding theoretical analysis.

\begin{figure}[!htbp]
    \centering
    \includegraphics[
        width=0.88\textwidth,
        height=0.28\textheight,
        keepaspectratio
    ]{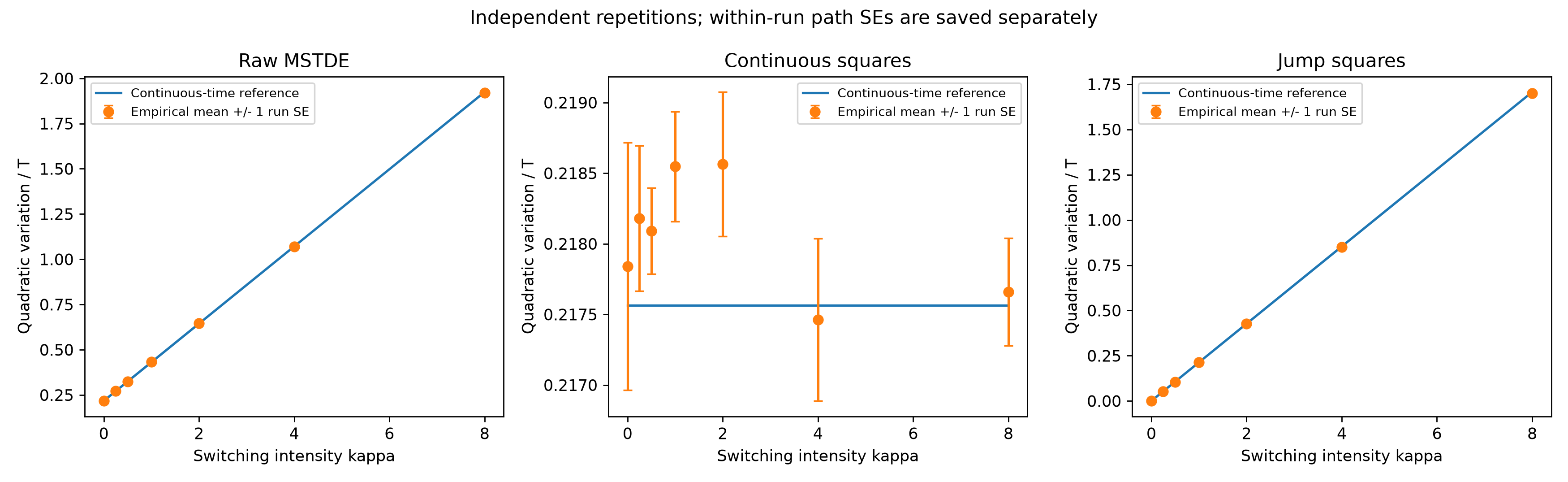}
    \caption{Decomposition of the MSTDE into the Brownian and regime-switching contributions for different switching intensities.}
    \label{fig:exp2-mstde-decomposition}

    \vspace{3pt}

    \includegraphics[
        width=0.88\textwidth,
        height=0.28\textheight,
        keepaspectratio
    ]{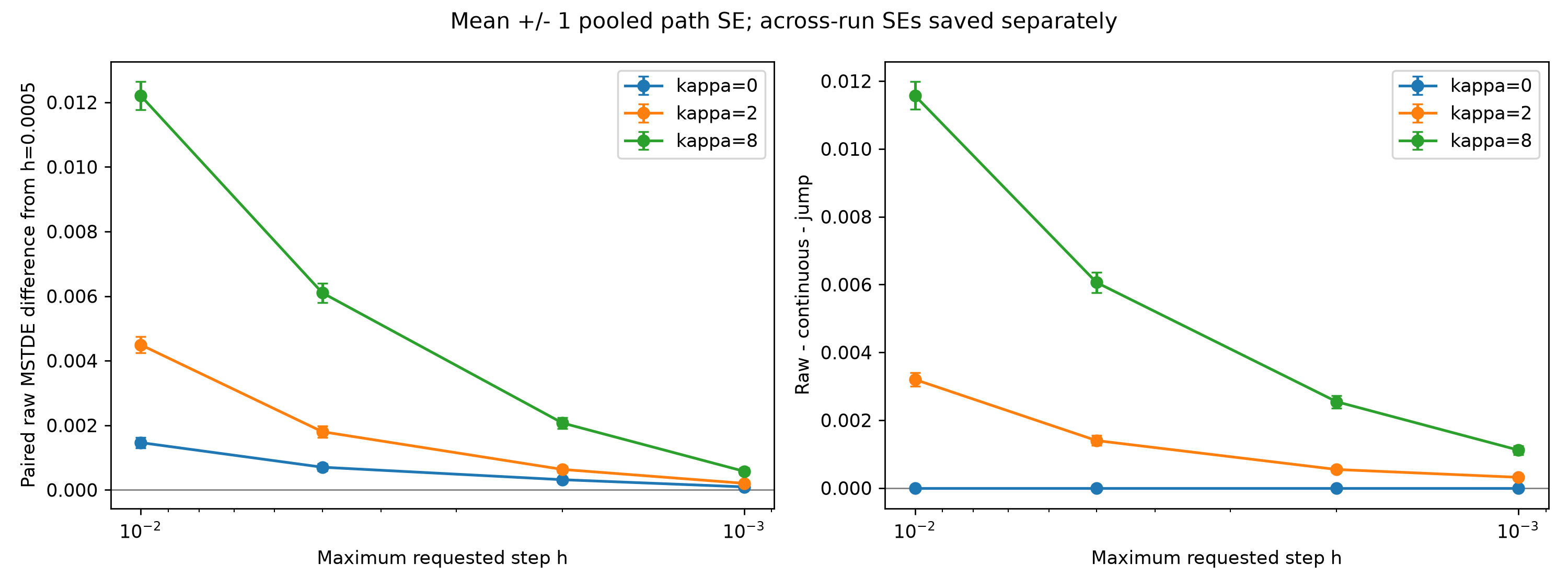}
    \caption{Mesh-refinement analysis for the MSTDE decomposition under different time discretizations.}
    \label{fig:exp2-paired-mesh-refinement}
\end{figure}

\section{Conclusions}
\label{sec:Conclusions}

This paper develops a unified martingale framework for policy evaluation, policy gradient, and Actor-Critic learning in continuous-time stochastic control with Markov regime-switching. The key feature of the framework is that the diffusion dynamics and regime transitions are treated within the same semimartingale structure. For policy evaluation, the value process is characterized through martingale conditions that account for both Brownian and regime-transition martingales. This leads to regime-switching extensions of martingale loss and martingale orthogonality methods. For policy improvement, the regime-dependent value functions remain coupled through the switching mechanism, but the resulting policy-gradient representation can be reformulated in terms of observable trajectory increments. Under an action-independent switching generator, this representation avoids explicit evaluation of the drift, diffusion coefficients, and regime transition rates. These results provide a common theoretical basis for the offline and online Actor-Critic algorithms developed in the paper.
The analysis also clarifies the role of regime-switching in data-driven continuous-time learning. Regime transitions affect the theoretical generator, the semimartingale decomposition of the value process, and its predictable quadratic variation, but their effects can be incorporated through observed value increments in the trajectory-based learning rules. This distinction is important for preserving the model-free character of the resulting algorithms. Together with the finite-grid consistency analysis, the proposed framework connects continuous-time PE and PG theory with implementable regime-switching Actor-Critic methods.

% \bibliographystyle{plainnat-doi}
% \bibliography{referencelist}

\end{document}